\theoremstyle{plain}
    \newtheorem{maintheorem}{Theorem}
    \newtheorem{mainproposition}{Proposition}
    \newtheorem{mainquestion}{Question}
    \newtheorem{proposition}{Proposition}[section]
    \newtheorem{theorem}[proposition]{Theorem}
    \newtheorem{corollary}[proposition]{Corollary}
    \newtheorem{lemma}[proposition]{Lemma}
\theoremstyle{definition}
    \newtheorem{definition}[proposition]{Definition}
    \newtheorem{condition}[proposition]{Condition}
    \newtheorem{notation}[proposition]{Notation}
    \newtheorem{question}[proposition]{Question}
    \newtheorem{algorithm}[proposition]{Algorithm}
    \newtheorem{example}[proposition]{Example}
    \newtheorem{case}{Case}
    \newtheorem{subcase}{Case}[case]
	\newtheorem{claim}{Claim}
\theoremstyle{remark}
	\newtheorem{remark}[proposition]{Remark}%
\newcommand{\bulsubsection}[1]{\subsection*{$\bullet$\quad #1}}
\newcommand{\bulsubsubsection}[1]{\subsubsection*{$\bullet$\quad #1}}
\newcommand{\ZZ}{\mathbb{Z}}
\newcommand{\QQ}{\mathbb{Q}}
\newcommand{\RR}{\mathbb{R}}
\newcommand{\FF}{\mathbb{F}}
\newcommand{\rn}[1]{\romannumeral #1}
\newcommand{\id}{\mathit{id}}
\newcommand{\isom}{\cong}
\newcommand{\htpy}{\simeq}
\newcommand{\<}{\left<}
\renewcommand{\>}{\right>}
\newcommand{\ox}{\otimes}
\renewcommand{\epsilon}{\varepsilon}
\DeclareMathOperator{\Span}{Span}
\DeclareMathOperator{\fchar}{char}
\newcommand{\slfrac}[2]{\left.#1\middle/#2\right.}
\newcommand{\Kh}{\textit{Kh}}
\newcommand{\Lee}{\textit{Lee}}
\newcommand{\BN}{\textit{BN}}
\DeclareMathOperator{\qdeg}{qdeg}
\renewcommand{\bar}[1]{\mkern 1.5mu\overline{\mkern-1.5mu#1\mkern-1.5mu}\mkern 1.5mu}
\renewcommand{\a}{\mathbf{a}}
\renewcommand{\b}{\mathbf{b}}
\newcommand{\s}{\bar{s}}
\newcommand{\ca}{\alpha}
\newcommand{\cb}{\beta}
\newcommand{\cx}{\xi}
\newcommand{\cy}{\eta}
\newcommand{\cz}{\zeta}
\newcommand{\dottedcircle}{\tikz\draw[black,dotted] (0,0) circle (1ex); $\,$}
\title{
    Divisibility of Lee's class and its relation with Rasmussen's invariant
}
\author{Taketo Sano}
\begin{document}

    \maketitle
    
    \begin{abstract}
    Lee homology (a variant of Khovanov homology) over $\QQ$ possesses the ``canonical generators" as its basis. The generators (Lee's classes) $[\ca(D, o)]$ are constructed combinatorially from an oriented link diagram $D$, one for each alternative orientation $o$ on $D$. Let $R$ be an integral domain. There exists a family of link homology theory $\{ H_c(-; R) \}_{c \in R}$, where Khovanov's theory corresponds to $c = 0$ and Lee's theory corresponds to $c = 2$. For each $c \in R \setminus 0$, Lee's classes $[\ca(D, o)]$ can be defined as elements in $H_c(D; R)$, but when $c$ is not invertible then they do not form a basis; in fact they are divisible by $c$-powers. We define the $c$-divisibility $k_c(D)$ of $[\ca(D, o)]$ with $o$ the given orientation of $D$. For any link $L$ and its diagram $D$, we prove that $\bar{s}_c(L) := 2k_c(D) + w(D) - r(D) + 1$ is a link invariant, where $w$ is the writhe, and $r$ is the number of Seifert circles. We pose the question whether $\bar{s}_c$ coincides with Rasmussen's $s$-invariant. There are several evidences that support the affirmative answer. For instance, $\s_c$ is a link concordance invariant, and the Milnor conjecture can be reproved using $\s_c$. Also for the special case $(R, c) = (\QQ[h], h)$, our $\bar{s}_c$ actually coincides with $s$ as knot invariants.
\end{abstract}

    \tableofcontents
    
    \section{Introduction}

Almost two decades have passed since Khovanov introduced in \cite{khovanov2000} a link homology theory, now known as Khovanov homology, that categorifies the Jones polynomial. In \cite{rasmussen2010khovanov} Rasmussen introduced a knot invariant $s$ based on Lee homology (a variant of Khovanov homology introduced by Lee in \cite{lee2005endomorphism}). He proved: (\rn{1}) the invariant $s$ defines a homomorphism from the knot concordance group in $S^3$ to $2\ZZ$, (\rn{2}) it provides a lower bound for the slice genus $g_*$ of knots, and (\rn{3}) $s$ and $g_*$ are equal for positive knots. Then the Milnor conjecture \cite{Milnor1968} follows as a corollary. The conjecture was originally proved by Kronheimer and Mrowka in \cite{Kronheimer:1993} using gauge theory, but Rasmussen's result was notable since it provided for the first time a purely combinatorial proof.

\medskip

The well-definedness of $s$ is based on the invariance of the ``canonical generators" of Lee homology over $\QQ$. Let $L$ be an oriented link and $D$ be its diagram. The generators $[\ca(D, o)] \in H_\Lee(D; \QQ)$ are constructed combinatorially from $D$, one for each alternative orientation $o$ on $D$. Lee introduced these classes in \cite{lee2005endomorphism} and proved that they form a basis of $H_\Lee(D; \QQ)$. Rasmussen then proved that they are invariant (up to unit) under the Reidemeister moves, hence called them the canonical generators of $H_\Lee(L; \QQ)$. The construction can be done over $\ZZ$, so one may expect that they form a basis of $H_\Lee(D; \ZZ)$. However, by direct computation, we see that there are many diagrams such that the classes are divisible by 2-powers (hence do not form a generating set), and the 2-divisibility is not constant among diagrams of the same link. Where does `2' come from? How does 2-divisibility of the classes vary under the Reidemeister moves?

\medskip

In this paper, we show that `2' is the difference of the two roots of the polynomial $X^2 - 1$ which is used in the construction of Lee homology. This phenomenon can be seen in a more generalized setting. Khovanov introduced in \cite{khovanov2004} a family of link homology theories $\{ H_{h, t}(-; R) \}_{h, t \in R}$ over an arbitrary commutative ring $R$. Khovanov's original theory is given by $(h, t) = (0, 0)$ and Lee's theory is given by $(h, t) = (0, 1)$. From the arguments given by Mackaay, Turner, Vaz in \cite{mackaay2007remark}, we see that if the polynomial $X^2 - hX - t$ factors into linear polynomials, then Lee's classes $[\ca(D, o)]$ can be defined in $H_{h, t}(D; R)$.

\begin{mainproposition}
    With the above setting, let $c = \sqrt{h^2 + 4t}$.
    \begin{enumerate}
        \item If $c$ is invertible in $R$, then Lee's classes form a basis of $H_{h, t}(D; R)$. (\Cref{prop:ab-gen})
        
        \item If $D, D'$ are two diagrams related by a single Reidemeister move, then the `ratio' of the corresponding pair of classes $[\ca(D, o)]$ and $[\ca(D', o')]$ is given by $\pm c^j$ for some $j \in \{0, \pm 1\}$. (\Cref{prop:cacb-variance-under-rho})
        
        \item For another $(h', t')$ such that $c = \sqrt{h'^2 + 4t'}$, the corresponding groups $H_{h, t}(D; R)$ and $H_{h', t'}(D; R)$ are naturally isomorphic, and under the natural isomorphism Lee's classes correspond one-to-one. (\Cref{prop:ht-relation})
    \end{enumerate}
\end{mainproposition}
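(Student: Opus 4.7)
The plan is to reduce all three statements to computations in the Frobenius algebra $A := R[X]/(X^2 - hX - t)$ equipped with the standard Khovanov (co)multiplication and counit ($\Delta(1) = 1 \otimes X + X \otimes 1 - h \cdot 1 \otimes 1$, $\Delta(X) = X \otimes X + t \cdot 1 \otimes 1$, $\epsilon(1) = 0$, $\epsilon(X) = 1$). Fix a factorization $X^2 - hX - t = (X - \alpha)(X - \beta)$, so $c = \alpha - \beta$, and set $\a := X - \beta$, $\b := \alpha - X$ inside $A$. A short direct calculation then gives the key relations
\[
\a^2 = c\a,\ \ \b^2 = c\b,\ \ \a\b = 0,\ \ \a + \b = c,\ \ \Delta(\a) = \a \otimes \a,\ \ \Delta(\b) = -\b \otimes \b,\ \ \epsilon(\a) = 1,\ \ \epsilon(\b) = -1.
\]
Following Mackaay--Turner--Vaz, the chain representative of $[\ca(D, o)]$ sits at the oriented resolution for $o$ and is the tensor product of $\a$'s and $\b$'s placed at the Seifert circles as prescribed by $o$. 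All three parts reduce to combinatorial manipulations using these identities.

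\textbf{Part (1).} When $c$ is a unit, $\a/c$ and $\b/c$ are complementary orthogonal idempotents, so $A \cong R \times R$ as Frobenius $R$-algebras. Applying this splitting pointwise decomposes $C_{h,t}(D; R)$ into a direct sum of subcomplexes indexed by labelings of the Seifert circles by $\{\a, \b\}$ at every resolution. By the relations above, a saddle merge/split is nonzero only when its merging or splitting circles share a label, so the surviving (non-acyclic) subcomplexes are exactly those indexed by labelings constant on each link component of $L$---precisely the alternative orientations $o$. Each such subcomplex is a copy of the trivial TQFT on $D$, with homology a single $R$ concentrated at the oriented resolution and generated by the chain representing $[\ca(D, o)]$; summing over $o$ yields the desired basis.

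\textbf{Part (2).} I would apply the standard chain-level Reidemeister-invariance maps (available for $H_{h,t}$ via Mackaay--Turner--Vaz) directly to the tensor representative of $[\ca(D, o)]$ and compare with $[\ca(D', o')]$. Each such map is a composition of a few local applications of $m$, $\Delta$, $\epsilon$, and the unit on small tensors of $\a$, $\b$; by the identities above, only $m$ produces a factor of $c$ (via $\a^2 = c\a$), while $\Delta$, $\epsilon$, and unit contribute at most a sign, and inverse moves absorb the $c^{-1}$. Combined with the Seifert-circle-count shifts and the sign from $\Delta(\b) = -\b \otimes \b$, the overall ratio works out to $\pm c^j$ for some $j \in \{0, \pm 1\}$. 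The hardest part will be R3, where the invariance map is a composite of several local pieces and one must carefully check, over each of the subcases determined by how $o$ orients the three strands, that no additional factor of $c$ slips in.

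\textbf{Part (3).} Given $(h', t')$ with $h'^2 + 4t' = c^2$, pick a factorization $X^2 - h'X - t' = (X - \alpha')(X - \beta')$ with $\alpha' - \beta' = c$; then $s := \alpha' - \alpha = \beta' - \beta$ is automatically a well-defined element of $R$, satisfying $h' = h + 2s$ and $t' = t - hs - s^2$. The shift $X \mapsto X - s$ defines an $R$-algebra map $A_{h,t} \to A_{h',t'}$ which a direct check shows is an isomorphism of Frobenius $R$-algebras (preserving both $\Delta$ and $\epsilon$), and by construction it sends $\a_{h,t} \mapsto \a_{h',t'}$ and $\b_{h,t} \mapsto \b_{h',t'}$. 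Applying this isomorphism factor-by-factor at every circle of every resolution yields the natural chain isomorphism $C_{h,t}(D; R) \cong C_{h',t'}(D; R)$ and hence the natural isomorphism of link homologies, under which Lee's classes correspond one-to-one.
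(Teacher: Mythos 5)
In Part (1) your identification of the surviving summands is backwards, and this is a genuine gap. The decomposition coming from the idempotents $\a/c,\ \b/c$ is indexed by colorings of the \emph{arcs} of $D$ by $\{\a, \b\}$ (a colored state determines such a coloring, and the differential preserves it); a summand involves more than one state exactly when some crossing is monochromatic, since then both resolutions of that crossing are compatible with the coloring. Your observation that only same-label merges/splits are nonzero works against your conclusion: when $c$ is invertible those edge maps ($m(\a \ox \a) = c\,\a$, $\Delta(\a) = \a \ox \a$, etc.) are isomorphisms on the relevant rank-one pieces, so every summand containing a monochromatic crossing is an iterated cone of isomorphisms and hence acyclic. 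The summands that survive are those in which every crossing joins differently colored arcs, so that the resolution of each crossing is forced and the summand is free of rank one, concentrated at the corresponding oriented resolution with generator $\ca(D, o)$; by \Cref{algo:ab-coloring} and \Cref{lem:ab-bipartite} these colorings are exactly the ones induced by the $2^{|L|}$ alternative orientations. Such colorings are \emph{not} constant on link components: the label alternates each time a component passes a crossing, because consecutive arcs of a component lie on adjacent, hence differently colored, Seifert circles. Conversely, a labeling constant on a component with at least one crossing makes all of its crossings monochromatic, so its summand is acyclic and contributes nothing. With this correction your argument becomes precisely the admissible-coloring proof the paper invokes (Wehrli, Lewark); your claim that the non-acyclic summands are the component-constant labelings, ``each a copy of the trivial TQFT concentrated at the oriented resolution,'' is false as stated.

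Part (2) is only a plan, and it is the same route as the paper's appendix; be aware of the extra subtlety handled there: for RM1 (right twist), RM2 with $\Delta r = 2$, and the $\uparrow\downarrow\uparrow$ cases of RM3, the image $\rho(\ca(D,o))$ is \emph{not} on the nose a multiple of $\ca(D', o')$ --- one must produce explicit chains whose boundaries realize the relation (and when $j = -1$ the statement must be read as $c\,[\ca(D',o')] = \pm\rho[\ca(D,o)]$, since $c$ need not be invertible). This bookkeeping is exactly where the table of exponents comes from, so ``apply the map and read off the factor of $c$'' undersells the required work. Part (3), by contrast, is correct and somewhat more direct than the paper: since the statement only concerns a fixed $c$, your shift $X \mapsto X - s$ with $s = (h' - h)/2$, which lies in $R$ by \Cref{cond:ab-cond1}, is a Frobenius algebra isomorphism matching $\a, \b$ on both sides, and it agrees with the paper's map $f$ in the case $\theta = 1$; the paper's twist machinery is only needed for the more general comparison $c = \theta c'$ with $\theta$ invertible.
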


The first two statements imply that the situation is completely analogous to $\QQ$-Lee theory when $c$ is invertible. The third says that the isomorphism class of $H_{h, t}(D; R)$ is determined by $c$, and Lee's classes are well-defined under the identification. Thus we obtain a family of link homology groups $\{H_c(D; R)\}_{c \in R}$. \Cref{fig:ht-param-sp_intro} depicts the $(h, t)$-parameter space, where each point $(h, t)$ corresponds to $H_{h, t}(D; R)$ and the parabola $h^2 + 4t = c^2$ corresponds to the isomorphism class $H_c(D; R)$. 

\begin{figure}[ht]
    \centering
    \includegraphics[scale=0.35]{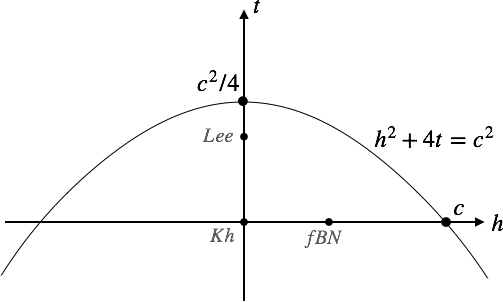}
    \caption{The $(h, t)$-parameter space.}
    \label{fig:ht-param-sp_intro}
\end{figure}

Next we assume $R$ is an integral domain, and consider when $c$ is non-zero, non-invertible in $R$. In this case, Lee's classes do not form a basis of $H_c(D; R)$; in fact they are divisible by $c$-powers. Let $o$ be the given orientation of $D$. We define the \textit{$c$-divisibility} of $[\ca(D, o)]$ (modulo torsions) by the exponent of its $c$-power factor, and denote it by $k_c(D)$. From the following proposition, we may regard $k_c$ as measuring the ``non-positivity" of the diagram. 

\begin{mainproposition}
    If $D$ is positive, then $k_c(D) = 0$. (\Cref{prop:k_posD})
\end{mainproposition}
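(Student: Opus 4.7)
My plan is to find a cycle representative of $[\ca(D, o)]$ that is $c$-primitive at the chain level, sitting at a position in the cube of resolutions where no boundary can modify it.

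First I would observe that since every crossing of $D$ is positive and $o$ is the given orientation, the oriented resolution of $D$ is the all-$0$ smoothing, which occupies the initial vertex of the cube of resolutions. No Khovanov differential maps into this vertex, so for any chain $w$ the component of $\partial w$ at the all-$0$ vertex vanishes. Consequently, if $[\ca(D, o)] = c \cdot [\gamma] + \tau$ in $H_c(D; R)$ with $\tau$ torsion, then projecting to the all-$0$ vertex and using that $A^{\otimes r(D)}$ is a free $R$-module over an integral domain, one deduces $\ca(D, o) = c \cdot \gamma_0$ inside the chain group $A^{\otimes r(D)}$, where $A = R[X]/(X^2 - hX - t)$, $r(D)$ is the number of Seifert circles, and $\gamma_0$ is the all-$0$-component of a cycle representative of $[\gamma]$.

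Next I would show that $\ca(D, o)$ is not divisible by $c$ in $A^{\otimes r(D)}$. Unpacking the construction, $\ca(D, o) = v_1 \otimes \cdots \otimes v_{r(D)}$, where each $v_i$ is a canonical factor $\a = X - \alpha$ or $\b = X - \beta$ attached to the $i$-th Seifert circle by the canonical two-coloring, with $\alpha, \beta$ the roots of $X^2 - hX - t$ (so $\alpha - \beta = \pm c$). In the free $R$-module $A$ with basis $\{1, X\}$, both $\a$ and $\b$ have coefficient $1$ on $X$, so the coefficient of $\ca(D, o)$ at the basis element $X^{\otimes r(D)}$ of $A^{\otimes r(D)}$ equals $1$. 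Since $c$ is not a unit in $R$, no element of $A^{\otimes r(D)}$ with a coordinate equal to $1$ can be divisible by $c$. This contradicts $\ca(D, o) = c \cdot \gamma_0$ and forces $k_c(D) = 0$.

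The only delicate step is the chain-level reduction in the first paragraph, where one must use torsion-freeness of $A^{\otimes r(D)}$ over $R$ to pass from the equation $s \cdot (\ca(D, o) - c \gamma_0) = 0$ (obtained from the all-$0$-vertex projection after clearing the torsion $\tau$ by some nonzero $s \in R$) to $\ca(D, o) = c \gamma_0$ itself. Everything else is a direct read-off from the cube-of-resolutions construction and the explicit tensor form of $\ca(D, o)$ recalled in \cite{mackaay2007remark}, so after these identifications the proof is essentially routine.
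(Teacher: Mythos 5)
Your proof is correct, and it shares the paper's key observation --- that for a positive diagram the $\ca$-cycle sits at the all-$0$ (oriented) resolution and has coefficient $1$ on $X^{\otimes r(D)}$, hence is $c$-primitive at the chain level --- but the mechanism you use to transfer this to homology is different. The paper composes the quotient chain maps $C(D) \rightarrow C(D_0) \rightarrow \cdots \rightarrow C(D_{0\cdots 0})$ obtained by $0$-resolving the crossings one at a time, lands in the crossingless diagram where homology equals the chain module $A^{\otimes r}$, and then invokes the monotonicity of divisibility under module homomorphisms (\Cref{lem:c-div}); this sidesteps any discussion of torsion or boundaries, since one only compares $k_c$ of $[\ca(D)]$ with $k_c$ of its image. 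You instead stay inside $C(D)$ and argue directly: because $n^-(D)=0$ the complex vanishes in negative homological degree (equivalently, no differential enters the initial vertex of the cube), so after clearing the torsion by a nonzero $s \in R$ the relation $s(\ca - c\gamma) = \partial w$ projects to $s\ca = sc\,\gamma_0$ in the free module $A^{\otimes r(D)}$, and torsion-freeness over the integral domain gives $\ca = c\gamma_0$, contradicting the unit coefficient on $X^{\otimes r(D)}$ since $c$ is not invertible (the standing assumption of \Cref{sec:sc-def}, without which $k_c \equiv \infty$ and the statement is vacuous). Both arguments are valid; the paper's route is slightly slicker and reuses a lemma needed elsewhere, while yours is self-contained on the single diagram at the cost of the torsion-clearing step, which you have handled correctly.
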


In another paper \cite{Rasmussen:2005vo} of Rasmussen's, the variance of the canonical generators under the Reidemeister moves is stated in more detail. The statement is generalized to our case as follows:

\begin{mainproposition}
    The variance of $k_c$ under each Reidemeister move is equal to the variance of $\frac{r - w}{2}$, where $w$ is the writhe, and $r$ is number of Seifert circles. (\Cref{prop:cacb-variance-under-rho})
\end{mainproposition}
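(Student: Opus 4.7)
The plan is to verify the proposition by direct case analysis of the Reidemeister moves, leveraging the fact already assembled in Proposition 1.1(2): the chain map $\rho \colon C_c(D) \to C_c(D')$ induced by a single move sends $[\ca(D,o)]$ to $\pm c^j\,[\ca(D',o')]$ for some $j \in \{0,\pm 1\}$. Because $\rho$ induces an isomorphism on $H_c$ (modulo torsion), this exponent governs the change of $c$-divisibility, giving $\Delta k_c = \pm j$, with the sign dictated by whether $j$ measures divisibility on the source or target side. The work is thus to identify $j$ for each move and confirm that it equals the variance of $\tfrac{r-w}{2}$.

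First I would fix the standard chain-level models of the Reidemeister maps in the generalised Bar-Natan framework carrying $H_c$. For each move I would apply $\rho$ directly to the combinatorial formula for $\ca(D,o)$, whose factors are the two ``idempotent-like'' labels $\a = X - a$ and $\b = X - b$, where $a, b$ are the roots of $X^2 - hX - t$, so that $a - b = c$. The resulting transformation of $\ca(D,o)$ is a small calculation in the Frobenius algebra $R[X]/(X^2-hX-t)$, and its output is a scalar in $\{\pm 1, \pm c, \pm c^{-1}\}$ multiplying $\ca(D',o')$.

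In parallel I would tabulate $\Delta w$ and $\Delta r$ for each oriented move: for RI, the positive-kink case gives $(\Delta w, \Delta r) = (+1, +1)$, so $\Delta\tfrac{r-w}{2} = 0$, while the negative-kink case gives $(-1, +1)$, so $\Delta\tfrac{r-w}{2} = +1$; for RII the writhe contributions of the two crossings cancel and the Seifert smoothings undo the move, giving $(0,0)$; for RIII both the writhe and the Seifert count are unchanged, giving $(0,0)$ as well. Matching these against the exponent $j$ read off in the previous step completes the verification.

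The main obstacle is the bookkeeping for the orientation-sensitive subcases, particularly the two signs of RI and the several orientation patterns of RII and RIII. Although $\ca(D,o)$ is defined globally from the Seifert resolution of $D$, each Reidemeister move is local, so its effect on the canonical generator depends only on how the arcs and Seifert circles meet the local patch; the delicate step is to trace how the labels $\a, \b$ transport through those arcs after the move and to identify the resulting scalar. Once this is pinned down the equality $\Delta k_c = \Delta\tfrac{r-w}{2}$ reduces to reading off a small finite table, and the invariance of $\bar{s}_c(L) = 2k_c(D) + w(D) - r(D) + 1$ follows immediately.
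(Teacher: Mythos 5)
Your overall strategy is the same as the paper's: run the chain-level Reidemeister maps on the canonical cycle, read off the power of $c$ that appears, and compare it with the change in $\frac{r-w}{2}$; the paper does exactly this in its appendix and then converts the exponent into a statement about $k_c$ via the divisibility lemmas on the torsion-free quotient. However, your preliminary tabulation contains a concrete error that sits precisely where the proposition has content. You assert that for RII ``the Seifert smoothings undo the move'' and for RIII ``both the writhe and the Seifert count are unchanged,'' so that $(\Delta w,\Delta r)=(0,0)$ in all cases. This is false: when the two strands in an RII move are antiparallel, the orientation-preserving (Seifert) resolution of the two new crossings does \emph{not} undo the move --- it creates an extra small state circle, and $\Delta r = 2$; likewise the RIII move in the $\uparrow\downarrow\uparrow$ configuration can have $\Delta r = \pm 2$. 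These are exactly the cases in which the canonical class picks up a factor $c^{\pm 1}$ (in the paper's computation, $\ca' \sim -c\,\rho(\ca)$ for the antiparallel RII case and $c\,\ca' \sim \rho(\ca)$ for one of the RIII cases), matching $\Delta\frac{r-w}{2} = \pm 1$. With your table, the ``small finite table'' you plan to read off at the end would not match the chain-level computation, and the verification as proposed fails; the fix is to redo the Seifert-circle bookkeeping separately for each orientation pattern of RII and RIII, which is the bulk of the actual work.

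Two smaller points. First, the passage from the exponent $j$ to $\Delta k_c = j$ needs the divisibility to be computed in the free part $H_c(D;R)_f$: the equality $k_c(c^n z) = k_c(z) + n$ can fail in the presence of torsion, so ``isomorphism modulo torsion'' must be invoked together with torsion-freeness, as in the paper's Lemmas on $c$-divisibility. Second, since $c$ is not invertible, the case $j=-1$ has to be interpreted as $c\,[\ca(D',o')] = \pm\rho[\ca(D,o)]$ rather than as an honest equality with $c^{-1}$; your scalar set $\{\pm 1, \pm c, \pm c^{-1}\}$ glosses over this, and the divisibility argument should be phrased accordingly.
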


Thus we obtain:

\begin{maintheorem}
    Let $L$ be a link. For any diagram $D$ of $L$, the value
    \[
        \s_c(L) := 2k_c(D) + w(D) - r(D) + 1
    \]
    is an invariant of $L$.
\end{maintheorem}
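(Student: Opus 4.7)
The plan is to invoke Reidemeister's theorem: any two diagrams $D, D'$ of an oriented link $L$ are connected by a finite sequence of oriented Reidemeister moves, so it suffices to show that the combination $2k_c(D) + w(D) - r(D)$ is unchanged under a single such move (the constant $+1$ in the definition of $\s_c$ plays no role in invariance, serving only to normalize the value on the unknot).

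Before doing so, I would briefly confirm that $k_c(D)$ is a well-defined nonnegative integer. Since $R$ is an integral domain and $c \neq 0$, the $c$-adic valuation of any non-torsion element of the finitely generated $R$-module $H_c(D; R)$ is finite; and by \Cref{prop:ab-gen}, after inverting $c$ Lee's class $[\ca(D, o)]$ belongs to a basis of the localized module, hence is non-torsion in $H_c(D; R)$ itself. Thus $k_c(D) \in \ZZ_{\geq 0}$ is well-defined.

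The main step is then an immediate algebraic consequence of the variance proposition. By \Cref{prop:cacb-variance-under-rho}, if $D$ and $D'$ differ by one Reidemeister move then
\[
    k_c(D') - k_c(D) \;=\; \frac{r(D') - w(D')}{2} - \frac{r(D) - w(D)}{2},
\]
which rearranges to
\[
    \bigl(2k_c(D') + w(D') - r(D')\bigr) - \bigl(2k_c(D) + w(D) - r(D)\bigr) \;=\; 0.
\]
Telescoping along a Reidemeister sequence from $D$ to $D'$ yields $\s_c(D) = \s_c(D')$, so the common value is a link invariant of $L$.

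The genuine difficulty is entirely absorbed into \Cref{prop:cacb-variance-under-rho}, which requires a case-by-case analysis of each oriented variant of R1 (positive and negative), R2 (parallel and anti-parallel), and R3, using explicit chain-level formulas for the Reidemeister isomorphisms on $H_c$ and careful tracking of how the $c$-divisibility of $[\ca(D, o)]$ transforms. Granting that proposition, the theorem collapses to the one-line cancellation above, and I foresee no additional obstacle.
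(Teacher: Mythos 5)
Your argument is essentially the paper's own proof: \Cref{prop:RM-k-relation} derives $\Delta k_c = (\Delta r - \Delta w)/2$ from \Cref{prop:cacb-variance-under-rho} together with \Cref{cor:rw-2j}, the passage from the ratio $\pm c^j$ to the divisibility shift being supplied by \Cref{lem:c-add} and \Cref{lem:c-div} (an isomorphism preserves $c$-divisibility, and multiplication by $c^j$ shifts it by exactly $j$ in the torsion-free quotient $H_c(D;R)_f$, which is where $k_c$ is defined), after which the theorem follows by the same rearrangement and telescoping you give. One minor caveat: your finiteness justification is too quick, since over an arbitrary integral domain a nonzero element of a finitely generated torsion-free module can be divisible by all powers of $c$; the paper asserts finiteness only when $R$ is a PID, and in any case the invariance argument does not require it because $k_c$ is allowed to take values in $[0,\infty]$ and the cited lemmas handle the infinite case as well.
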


From the computational experiments\footnote{
    \url{https://git.io/fphro}
} 
done for $(R, c) = (\ZZ, 2)$ (i.e.\ $\ZZ$-Lee theory), we see that the values of $\s_2$ coincide with those of $s$ for prime knots of crossing number up to 11. Here we pose the main question:

\begin{mainquestion} \label{mainq1}
    Does $\s_c$ coincide with $s$ for any $(R, c)$?
\end{mainquestion}

We give several theoritical evidences that support the affirmative answer. First, by inspecting the behavior of $\s_c$ under cobordisms, we obtain properties of $\s_c$ that are common to $s$:

\begin{maintheorem} $\ $
    \begin{enumerate}
        \item $\s_c$ is a link concordance invariant,
        \item $|\s_c(K)| \leq 2g_*(K)$ for any knot $K$, and
        \item $\s_c(K) = 2g_*(K) = 2g(K)$ if $K$ is positive.
    \end{enumerate}
\end{maintheorem}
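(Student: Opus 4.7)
The plan is to model the argument on Rasmussen's original proof for the $\QQ$-Lee $s$-invariant, replacing the use of invertibility by a careful tracking of $c$-divisibility. All three items can be reduced to a single cobordism inequality: for every smooth connected oriented cobordism $\Sigma : K_0 \to K_1$,
\[
    |\bar{s}_c(K_1) - \bar{s}_c(K_0)| \le -\chi(\Sigma).
\]

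To establish this I would prove a cobordism lemma describing the action of the induced map $\Sigma_* : H_c(K_0; R) \to H_c(K_1; R)$ on Lee's classes: if $\Sigma$ is oriented compatibly with the given orientations on the boundary, then modulo $c$-torsion $\Sigma_*$ sends $[\ca(D_0, o_0)]$ to a unit multiple of $c^{m(\Sigma)}[\ca(D_1, o_1)]$, where the integer $m(\Sigma) \ge 0$ is determined by $\chi(\Sigma)$ and the writhe/Seifert-circle data of the boundary diagrams. The proof is by decomposing $\Sigma$ into Reidemeister moves and Morse handles (births, saddles, deaths). The Reidemeister cases are handled by the variance proposition already stated in the introduction; the Morse cases reduce to explicit computations in the Frobenius algebra $R[X]/(X^2 - hX - t)$, which after adjoining the roots of $X^2 - hX - t$ splits via the Mackaay--Turner--Vaz idempotents, with factors of $c$ appearing precisely when one descends back to $R$.

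Granting the cobordism inequality, item (1) is the case $\chi(\Sigma) = 0$. For (2), puncture a genus-$g$ slice surface for $K$ to obtain a connected cobordism $\tilde F : U \to K$ with $\chi(\tilde F) = -2g$; since $\bar{s}_c$ of the unknot vanishes, the inequality yields $|\bar{s}_c(K)| \le 2g$, and minimising over $g$ gives (2). For (3), let $D$ be a positive diagram of the positive knot $K$; by the positivity proposition $k_c(D) = 0$, and since every crossing is positive $w(D)$ equals the crossing number, so
\[
    \bar{s}_c(K) = w(D) - r(D) + 1 = 2 g_{\mathrm{Seif}}(D),
\]
where $g_{\mathrm{Seif}}(D)$ is the genus of the Seifert-algorithm surface obtained from $D$. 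Combining with (2) and the standard chain $g_{\mathrm{Seif}}(D) \ge g(K) \ge g_*(K)$ forces equality throughout.

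The main obstacle is the cobordism lemma in the regime where $c$ is neither zero nor invertible. In Rasmussen's original $\QQ$-Lee setting $[\ca(D, o)]$ is a basis element, so the cobordism map on its span is automatically controlled and only the $q$-grading shift needs tracking. Here neither source nor target class is primitive, so one must separately control the free and $c$-torsion parts of $\Sigma_*[\ca(D_0, o_0)]$ and extract the correct exponent of $c$ without inverting it. I expect the hardest step to be the saddle move, where one must verify that the divisibility introduced by the Frobenius product/coproduct matches exactly the shift in $(w - r)/2$ predicted by the formula.
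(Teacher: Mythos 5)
Your overall route is the paper's own: derive all three items from a cobordism statement for the $\ca$-classes, proved by decomposing the cobordism into Reidemeister and Morse moves, and then run Rasmussen's deductions; your reductions of (1)--(3) to the inequality are fine and match \Cref{prop:hat-s-cobordism} and \Cref{prop:s-pos-link}. The gap is in the central lemma itself, which is exactly where the paper's work lies (\Cref{prop:k-cobordism}). The image of $[\ca(D_0)]$ does \emph{not} stay of the form ``unit times $c^m$ times a single $\ca$-class'' as you pass through the movie: a birth (0-handle) sends $[\ca]$ to $c^{-1}\bigl([\ca\otimes\a]-[\ca\otimes\b]\bigr)$, a sum of $\ca$-classes for two different orientations with a \emph{negative} $c$-exponent, and a later saddle can kill such a summand when it merges incompatibly oriented components. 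The paper controls this with the ``permissible orientation'' bookkeeping: inductively the image is a $\pm c^{k}$-combination of $\ca$-classes indexed by permissible orientations, and only at the very end --- using the hypothesis that every component of $S$ meets the bottom boundary, so $S$ carries a unique permissible orientation --- does it collapse to $\pm c^{l}[\ca(D_1)]$ with $l=\tfrac12(-\Delta r+\Delta w-\chi(S))$. Relatedly, your claim $m(\Sigma)\ge 0$ is false in general; $l$ can be negative and must be interpreted as in \Cref{prop:cacb-variance-under-rho} (this does not affect the inequality).

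Second, the mechanism for non-invertible $c$ is not ``adjoining the roots of $X^2-hX-t$'' --- \Cref{cond:ab-cond1} already places them in $R$ --- but localization at $c$: by \Cref{cor:H(D)-tor} all torsion is $c$-torsion, so $H_c(D;R)_f$ injects into $H_c(D;R_c)$, where the $\ca$-classes form a basis (\Cref{prop:ab-gen}); one computes the exact exponent there and pulls the divisibility statement back through \Cref{lem:c-add} and \Cref{lem:c-div}. This dissolves your worry about ``separately controlling the free and $c$-torsion parts,'' and it makes the saddle computation routine; the genuinely delicate points are the birth and the orientation bookkeeping above. Finally, item (1) concerns \emph{link} concordance, so the cobordism statement must be proved for cobordisms each of whose components meets both boundaries (as in \Cref{prop:hat-s-cobordism}), not only for connected cobordisms between knots.
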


As a corollary, the Milnor conjecture can be reproved using $\s_c$. Next, we show that there exists $(R, c)$ such that $\s_c$ coincides with $s$ as knot invariants. The original definition of $s$ uses $\QQ$-Lee homology, but it can be generalized over an arbitrary field $F$ of $\fchar{F} \neq 2$. For the special case $(R, c)$ = $(F[h], h)$ where $h$ is an indeterminate of degree $-2$ (this corresponds to the Bar-Natan theory over $F$), we prove:

\begin{maintheorem} \label{mainthm3}
    For any knot $K$, 
    \[
        s(K; F) = \s_h(K; F[h])
    \]
    where $s(K; F)$ is the $s$-invariant of $K$ over $F$.
\end{maintheorem}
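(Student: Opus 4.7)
The plan is to identify the $h$-divisibility $k_h(D)$ with the quantum-filtration defect of the Lee class in the Bar-Natan homology of $K$, and match this against the filtration degree defining Rasmussen's $s$-invariant. The main structural input is that, for a knot $K$, the torsion-free quotient $H_h(K; F[h])/\mathrm{tor}$ is a rank-$2$ free $F[h]$-module whose two generators have quantum degrees $s(K; F) - 1$ and $s(K; F) + 1$.

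First I would establish this structure theorem. Since $\fchar F \neq 2$, the specialization $F[h] \to F$ sending $h \mapsto 2$ identifies $H_h(D; F[h]) \otimes_{F[h]} F$ with $H_{h=2,\,t=0}(D; F)$, which by \Cref{prop:ht-relation} is naturally isomorphic to $H_{Lee}(D; F)$ (both have $c = 2$), sending Lee's classes to Lee's classes. By Rasmussen's analysis (lifted to an arbitrary field with $\fchar F \neq 2$), the quantum-filtration degrees of the two combinations $[\ca] + [\bar{\ca}]$ and $[\ca] - [\bar{\ca}]$ in $H_{Lee}(K; F) \cong F^2$ are $s(K; F) - 1$ and $s(K; F) + 1$. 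Combined with the rank-$2$ freeness of $H_h(K; F[h, h^{-1}])$ from \Cref{prop:ab-gen}, this forces the quantum degrees of the two free $F[h]$-generators of $H_h(K; F[h])/\mathrm{tor}$ to be $s(K; F) \pm 1$.

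Next I would compute the chain-level quantum degree of $\ca(D, o)$. In the Bar-Natan Frobenius algebra $F[h][X]/(X^2 - hX)$, both labels $X - \cb$ and $X - \ca$ (whichever assignment of the roots $\{0, h\}$ to $\ca, \cb$) equal either $X$ or $X - h$, each of top $q$-degree $-1$; summed over the $r(D)$ circles of the oriented resolution and combined with the standard Khovanov shifts, one gets $\qdeg \ca(D, o) = w(D) - r(D)$. Writing $[\ca(D, o)] = h^{k_h(D)} \gamma$ modulo torsion with $\gamma$ not $h$-divisible, one has $\qdeg \gamma = w(D) - r(D) + 2 k_h(D)$. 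Since $[\ca(D, o)]$ has filtration degree exactly $s(K; F) - 1$ in $H_{Lee}(K; F)$, the element $\gamma$ must coincide (up to unit) with the bottom free generator of $H_h(K; F[h])/\mathrm{tor}$, yielding $w(D) - r(D) + 2 k_h(D) = s(K; F) - 1$, i.e., $\s_h(K; F[h]) = s(K; F)$.

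The main obstacle is lifting Rasmussen's quantum-filtration data from $H_{Lee}(K; F)$ back to $H_h(K; F[h])$: the specialization $h \mapsto 2$ is not filtration-preserving (it sends $h$ of $q$-degree $-2$ to the scalar $2$ of $q$-degree $0$), so identifying the quantum degrees of the two free generators requires an analysis of how the $h$-adic and quantum filtrations on $H_h(K; F[h])$ interact, for instance via a Turner-type spectral sequence from Khovanov homology. A secondary point is confirming that $\gamma$ lies in the bottom summand rather than the top; this should follow from the nonvanishing of the Lee class in the bottom filtration of $H_{Lee}(K; F)$ after transport along the natural isomorphism.
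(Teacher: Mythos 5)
Your route is genuinely different from the paper's, but it has a gap at exactly the step you flag as the ``main obstacle'', and that step is not a detail --- it is essentially the whole theorem. You want to read off that the two free generators of $H_h(K; F[h])_f$ sit in q-degrees $s(K;F)\pm 1$ by combining rank-2 freeness (\Cref{prop:ab-gen}, \Cref{cor:H(D)-tor}) with Rasmussen's filtration computation in $H_\Lee(K;F)$ transported through $h \mapsto 2$. But the specialization map is only q-degree \emph{non-decreasing}, so the filtered degrees of $[\ca\pm\cb]$ downstairs do not ``force'' the bigraded degrees of the generators upstairs; to get the two-sided identification you need either the classification of bounded complexes of free graded $F[h]$-modules into elementary pieces (so that the filtered Lee homology is determined by the bigraded module $H_h(K;F[h])$), or an intrinsic structure theorem like \Cref{prop:H(D)-basis}, whose proof in the paper occupies the bulk of Section 3.3 (the $A$-module structure via $X_p$, the decomposition over $R_0=F[h^2]$ from \Cref{lem:k-mirror-lem2}--\ref{lem:k-mirror-lem3}, and the divisibility constraints). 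Since your structure statement together with the easy chain-level computation $\qdeg \ca(D) = w(D)-r(D)$ immediately yields the theorem, deferring it to an unspecified ``Turner-type spectral sequence'' leaves the central claim unproved.

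Two further remarks. First, your ``secondary point'' is also under-justified as stated, though repairable: by homogeneity and non-$h$-divisibility, the only alternative to $\qdeg_h\gamma = s-1$ is that $\gamma$ is a unit multiple of the \emph{top} generator, and then $\qdeg[\ca] = \qdeg\bigl(\pi_*(h^k\gamma)\bigr) \geq s+1$, contradicting the exact equality $\qdeg[\ca] = s-1$ coming from \Cref{def:ras-s}; mere ``nonvanishing in the bottom filtration'' does not suffice, since a class with a top-generator component can still have filtered degree $s-1$. Second, note how the paper sidesteps the two-sided degree identification altogether: it proves only the inequality $s(K;F) \geq \s_h(K;F[h])$ directly, using that $\pi_*$ is q-degree non-decreasing and $\qdeg_h[\ca_h'] = 2k + w - r$, and then obtains the reverse inequality from the mirror antisymmetry of both invariants, where $\s_h(\bar{K}) = -\s_h(K)$ is the content of the mirror formula \Cref{prop:k-mirror} (the unimodular duality pairing applied to the basis of \Cref{prop:H(D)-basis}). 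If you want to keep your direct approach, you must actually prove the $F[h]$-structure result you are invoking; otherwise the one-inequality-plus-mirror strategy is the shorter path.
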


It is a famous open question whether there exists any $F$ such that $s(-; F)$ is distinct from $s = s(-; \QQ)$ (\cite[Question 6.1]{lipshitz2014refinement}). \Cref{mainthm3} implies that if \Cref{mainq1} is solved affirmatively, then all $s(-; F)$ are equal among fields $F$ of $\fchar{F} \neq 2$.

\medskip

Viewing the $s$-invariant from the perspective of divisibility has been suggested by Kronheimer and Mrowka in \cite{Kronheimer:2011by}, and by Collari in \cite{Collari:2017wr}, both based on the alternative definition of $s$ given by Khovanov in \cite{khovanov2004}. We expect that our approach would also lead to a better understanding of $s$.

\subsection*{Outline}

In Section 2, we first review the construction of Khovanov homology and Lee's classes in the generalized setting. Then we state the variance of the classes under the Reidemeister moves. In Section 3, we define $k_c(D)$ for a link diagram $D$ and the invariant $\s_c(L)$ for a link $L$. We also state the behavior of $\s_c$ under cobordisms, and obtain properties common to Rasmussen's $s$-invariant. Then we focus on knots and specialize to $(R, c) = (F[h], h)$ where $F$ is a field of $\fchar{F} \neq 2$, and prove that $\s_h$ coincides with $s$ over $F$. The final section gives further remarks and questions. 

\subsection*{Conventions}

In this paper, we assume knots and links are oriented. For a link $L$, we denote by $|L|$ the number of components of $L$, by $-L$ the link obtained from $L$ by reversing the orientation on each of its components, and by $\bar{L}$ the mirror image of $L$. We use the same notations for link diagrams.

\subsection*{Acknowledgements}

This paper is based on the master's thesis submitted to the Graduate School of Mathematical Sciences, the University of Tokyo. I am deeply grateful to my supervisor Mikio Furuta for his guidance and for everything he taught me in mathematics. I am also grateful to Kouki Sato for devoting a large amount of time for helpful discussions. I thank Yukio Kametani, Takayuki Kobayashi, Ryusuke Horiuchi for introducing me to the subject; Yoshihiro Matsumori for constructive feedbacks and correction of errors; members of \textit{swift-developers-japan} for helping me improve the computer program; and the members of my \textit{academist fan club} \footnote{\url{https://taketo1024.jp/supporters}} for the financial support. Finally, I thank my wife and my daughter for their support and encouragement throughout the years of my study and research.

\setcounter{maintheorem}{0}
    \section{Khovanov homology and Lee's classes} \label{sec:gen-kh-theory}

\subsection{Khovanov homology} \label{subsec:frob-n-khovanov}

In this section, we review Khovanov homology theory in the generalized form as given in \cite{khovanov2004}. Let $R$ be a commutative ring with unity. A \textit{Frobenius algebra} over $R$ is a quintuple $(A, m, \iota, \Delta, \epsilon)$ such that: (\rn{1}) $(A, m, \iota)$ is an associative $R$-algebra with multiplication $m$ and unit $\iota$, (\rn{2}) $(A, \Delta, \epsilon)$ is a coassociative $R$-coalgebra with comultiplication $\Delta$ and counit $\epsilon$, and (\rn{3}) the Frobenius relation holds: 
\[
    \Delta \circ m = (\id \otimes m) \circ (\Delta \otimes \id) = (m \otimes \id) \circ (\id \otimes \Delta).
\]

Let $h, t$ be two elements of $R$. Define $A_{h, t} = R[X]/(X^2 - hX - t)$ with the obvious $R$-algebra structure. Define the counit $\epsilon: A_{h, t} \rightarrow R$ by
\[
    \epsilon(1) = 0,\quad
    \epsilon(X) = 1.
\]
Then the comultiplication $\Delta$ is uniquely determined so that $(A_{h, t}, m, \iota, \Delta, \epsilon)$ becomes a Frobenius algebra. Explicitly, $\Delta$ is given by
\[
    \Delta(1) = X \otimes 1 + 1 \otimes X - h1 \otimes 1, \quad 
	\Delta(X) = X \otimes X + t 1.
\]

Given a link diagram $D$, we obtain a chain complex $C_{h, t}(D; R)$ and its homology $H_{h, t}(D; R)$ by following the construction of the original Khovanov homology, except that the Frobenius algebra $R[X]/(X^2)$ is replaced with $A_{h, t}$. The chain complex is also given a secondary grading, and under some condition $H_{h, t}(D; R)$ becomes bigraded or filtered. Here we remark that Khovanov's original theory (\cite{khovanov2000}) and Lee's theory (\cite{lee2005endomorphism}) are given by
\[
    H_\Kh = H_{0, 0},\quad 
    H_\Lee = H_{0, 1},
\]
and Bar-Natan's theory (\cite{BarNatan:2004if}) is given by
\[
    H_\BN = H_{h, 0}
\]
where $h$ is an indeterminate of degree $-2$. By collapsing $H_\BN$ with $h = 1$ we obtain the filtered version:
\[
    H_\mathit{fBN} = H_{1, 0}.
\]

The following theorem assures that any $H_{h, t}$ gives a link invariant:

\begin{theorem}[{\cite[Proposition 6]{khovanov2004}}] \label{thm:t-inv}
    Let $L$ be a link. For any diagram $D$ of $L$, the isomorphism class of $H_{h, t}(D; R)$ as a (graded / bigraded / filtered) $R$-module is an invariant of $L$.
\end{theorem}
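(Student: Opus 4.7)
The plan is to prove invariance under each of the three Reidemeister moves R1, R2, R3, which suffices because any two diagrams of the same link are related by a finite sequence of such moves together with planar isotopy, and planar isotopy manifestly preserves the complex up to relabelling.

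First, I would verify directly from the explicit formulas that the quintuple $(A_{h,t}, m, \iota, \Delta, \epsilon)$ is genuinely a Frobenius algebra over $R$, i.e.\ that $m$ is associative, $\Delta$ is coassociative, the counit pairing is nondegenerate, and the Frobenius relation $\Delta \circ m = (\id \otimes m)\circ(\Delta \otimes \id) = (m \otimes \id)\circ(\id \otimes \Delta)$ holds. Each is a short calculation on the basis $\{1, X\}$ using $X^2 = hX + t$. Once this is established, the cube-of-resolutions machinery applies formally: to each vertex of the cube of $2^n$ resolutions of $D$ one assigns a tensor power of $A_{h,t}$ indexed by the circles in that resolution, and to each edge (a saddle cobordism) one assigns $m$ or $\Delta$ according to whether it merges or splits circles. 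The Frobenius axioms plus Khovanov's sign convention make each square of the cube anticommute, so $d^2 = 0$ and $C_{h,t}(D;R)$ is a well-defined chain complex.

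Second, for each of R1, R2, R3 I would produce an explicit chain homotopy equivalence $\Phi \colon C_{h,t}(D;R) \to C_{h,t}(D'; R)$ (up to the standard shifts in homological and secondary grading) where $D'$ differs from $D$ by that single move. The cleanest execution is Bar-Natan-style Gaussian elimination on the cube. For R1, the subcomplex generated by the new circle labelled $X - h\cdot 1$ (say) is acyclic, and cancelling it leaves a shifted copy of the complex for the unkinked diagram; the acyclicity and the surviving identification are immediate consequences of $\epsilon$ applied along the birth/death cobordism at the kink. R2 proceeds by two successive such cancellations, the crucial identity again being the Frobenius relation applied to the two saddles near the bigon. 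R3 then follows either from R2 by a categorical argument (Reidemeister's theorem via the braidlike variant) or by a direct cube comparison.

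The main obstacle is the grading/filtration bookkeeping. When $h = t = 0$ the differential is homogeneous and everything is honestly bigraded, so the equivalences $\Phi$ preserve the quantum grading and the argument produces a bigraded invariant. When $h, t$ are nonzero and assigned degrees so that $X^2 - hX - t$ is homogeneous, the differential only preserves a descending filtration, and one must verify that both $\Phi$ and the Gaussian-elimination homotopies do not increase filtration degree, so that the induced map on the associated graded is an isomorphism and the filtered isomorphism class is a link invariant. This is where one must chase through the exact $q$-degrees of the components of $m$ and $\Delta$, but no new ingredient beyond the Frobenius relation enters; the detailed verification is carried out in the cited reference of Khovanov.
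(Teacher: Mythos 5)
Your proposal is correct and follows essentially the same route as the paper, which quotes Khovanov's Proposition 6 and, in its appendix, carries out exactly this move-by-move argument: decomposing the cube complex for each Reidemeister move into a subcomplex isomorphic to the complex of the simpler diagram plus acyclic pieces (your Gaussian elimination is the same mechanism in different packaging), with the grading/filtration bookkeeping handled by checking the maps respect the $q$-filtration. No genuine gap; the only caveat is that, like the paper, you ultimately defer the detailed verifications to Khovanov's cited proof.
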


In order to fix the terms and notations used in the later sections, we briefly review the construction of the chain complex $C_{h, t}(D; R)$. Let $D$ be a link diagram with $n$ crossings. Each crossing admits a 0-, 1-resolution as depicted in  \Cref{fig:1}. 
\begin{figure}[t]
	\centering
    \includegraphics[scale=0.4]{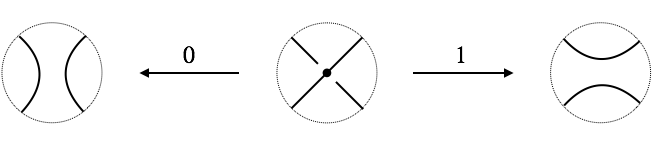}
	\caption{0-, 1-resolution of a crossing.}
	\label{fig:1}
\end{figure}
A simultaneous choice of resolutions for all crossings of $D$ is called a \textit{state}. For each state $s$, denote by $|s|$ the number of 1-resolutions in $s$. Two states are \textit{adjacent} if one is obtained from the other by changing the resolution of a single crossing. We write $s \prec s'$ when $s$ and $s'$ are adjacent and $|s| + 1 = |s'|$. Any state $s$ yields a diagram consisting of disjoint circles by resolving all crossings accordingly. We call such circles \textit{$s$-circles} and denote its number by $r(D, s)$. Each state $s$ corresponds to an $R$-module $V(D, s) = A^{\ox r(D, s)}$. An element in $V(D, s)$ of the form:
\[
    x = e_1 \ox \cdots \ox e_r,
    \quad e_i \in \{ 1, X \}
\]
is called an \textit{enhanced state}. For each pair of adjacent states $s \prec s'$, there is an $R$-module homomorphism $V(D, s) \rightarrow V(D, s')$, given by the multiplication $m$ or the comultiplication $\Delta$ of $A$, depending on whether the $s$-circle(s) merge or split when the resolution of the single crossing is changed from $0$ to $1$. These modules and maps form a commutative $n$-dimensional cubic diagram. After some adjustment of signs of the maps, the cube is folded to form the unnormalized chain complex
\[
    \bar{C}_{h, t}^i(D; R) = \bigoplus_{|s| = i} V(D, s)
\]
(see \Cref{fig:CKh}). Then $C_{h, t}(D)$ is defined by shifting the homological degree of $\bar{C}_{h, t}(D)$ by $-n^-$, the number of negative crossings of $D$. Each enhanced state $x$ is also endowed a secondary degree, which we call the \textit{q-degree}. Let $\deg(1) = 0,\ \deg(X) = -2$. Define $\deg(x) := \sum_i \deg(e_i)$ and
\[
    \qdeg(x) := \deg(x) + |s| + r(D, s) + n^+ - 2n^-,
\]
where $n^+, n^-$ are the number of positive, negative crossings of D respectively. This gives a bigrading on $C_{h, t}(D)$. If $R$ is graded with $h, t$ having 
\[
    (\deg{h} = -4 \text{\ or\ } h = 0) \text{\ and\ }
    (\deg{t} = -2 \text{\ or\ } t = 0),
\]
then the differential $d$ preserves the q-degree and $H_{h, t}(D; R)$ inherits the bigrading. Otherwise if 
\[
    (\deg{h} \geq -4 \text{\ or\ } h = 0) \text{\ and\ }
    (\deg{t} \geq -2 \text{\ or\ } t = 0),
\]
then $d$ is q-degree non-decreasing, so we may define a filtration on $C_{h, t}(D; R)$ as
\[
    F^j C_{h, t}^i(D; R) := \{ x \in C_{h, t}^i(D; R) \mid \qdeg(x) \geq j \}.
\]
$H_{h, t}(D; R)$ admits the induced filtration, where the (filtered) q-degree of a homology class $\gamma$ is given by the maximum q-degree among its representatives
\[
    \qdeg(\gamma) := \max\{ \qdeg(x) \mid [x] = \gamma \}. 
\]
Note that $H_\Kh$ and $H_\BN$ are bigraded, whereas $H_\Lee$ and $H_{f\BN}$ are filtered. 

\begin{figure}[t]
	\centering
    \includegraphics[scale=0.25]{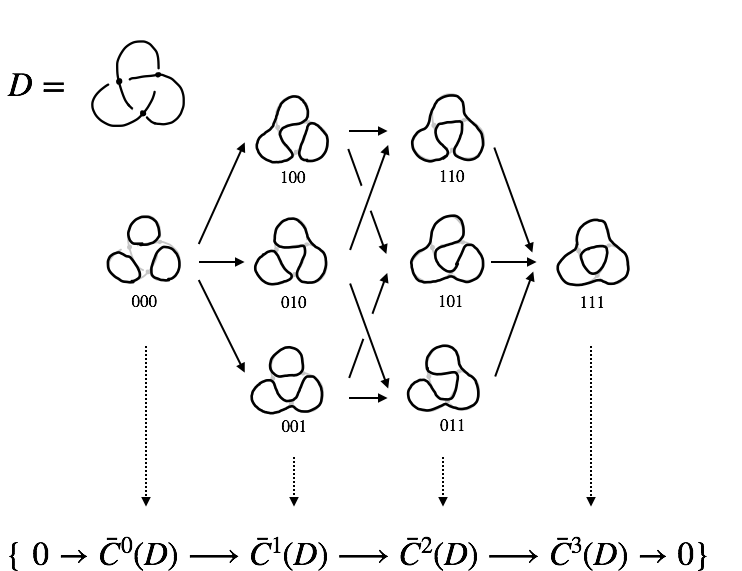}
	\caption{$\bar{C}^\cdot(D)$ for the left hand trefoil. \label{fig:CKh}}
\end{figure}

\medskip

Finally we state some basic properties of $C_{h, t}$ that are given in \cite{khovanov2000}.

\begin{proposition} \label{lem:ckh-supp}
	$C_{h, t}^{i, j}(D; R)$ is supported only where $j \equiv |D| \bmod{2}$. 
\end{proposition}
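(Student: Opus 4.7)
The plan is to show that the parity of $\qdeg(x)$ is the same for every enhanced state $x$ and that this common value is $|D| \bmod 2$. Since $\deg(x) = \sum_i \deg(e_i)$ with $\deg(1) = 0$ and $\deg(X) = -2$, the quantity $\deg(x)$ is always even, as is the term $-2n^-$. Hence, writing $x$ for an enhanced state associated to a state $s$,
$$\qdeg(x) \equiv |s| + r(D, s) + n^+ \pmod{2},$$
so it suffices to show $|s| + r(D, s) + n^+ \equiv |D| \pmod{2}$.

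The first step is to show that $|s| + r(D, s) \pmod 2$ is independent of $s$. Since the cube of resolutions is connected through its $1$-cells, it is enough to compare adjacent states $s \prec s'$, for which $|s'| - |s| = 1$ by definition and the move either fuses two $s$-circles into one or splits one into two, giving $r(D, s') - r(D, s) = \pm 1$. Therefore $|s| + r(D, s) \pmod{2}$ is a state-independent invariant of $D$.

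The second step is to evaluate this parity at the oriented (Seifert) state $s_o$, where the resolution at each crossing agrees with the given orientation. Then $|s_o| = n^-$, and the $s_o$-circles, appropriately joined at the crossings, bound an oriented Seifert surface $\Sigma$ for $L$ with $\chi(\Sigma) = r(D, s_o) - n$. Combining with $\chi(\Sigma) = 2 - 2g(\Sigma) - |L|$ gives $r(D, s_o) \equiv n + |L| \pmod 2$. Therefore
$$|s_o| + r(D, s_o) + n^+ \equiv n^- + (n + |L|) + n^+ \equiv 2n + |L| \equiv |D| \pmod{2},$$
which finishes the proof. There is no serious obstacle: the argument is just a parity bookkeeping that combines the local fact that flipping one resolution toggles both $|s|$ and $r(D, s)$ by $\pm 1$ with the Seifert genus formula applied at the oriented state.
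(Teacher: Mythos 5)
Your proof is correct, and it is essentially the standard argument for this fact, which the paper itself does not prove but simply cites from Khovanov's original paper: the parity of $|s| + r(D,s)$ is constant over the cube because each edge toggles both $|s|$ and $r(D,s)$ by one, and evaluating at the oriented resolution via $\chi(S) = r(D) - n(D) \equiv |L| \pmod 2$ gives the claim. One tiny remark: the formula $\chi(\Sigma) = 2 - 2g(\Sigma) - |L|$ presumes the Seifert surface is connected, but for a split diagram you just sum over components and the needed congruence $r(D,s_o) \equiv n + |L| \pmod 2$ still holds, so nothing breaks.
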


\begin{proposition} \label{prop:C_ht-basic}
    \ 
    \begin{enumerate}
        \item $C_{h, t}(-D) = C_{h, t}(D)$
        \item $C_{h, t}(D \sqcup D') \isom C_{h, t}(D) \otimes C_{h, t}(D')$
        \item $C_{h, t}(\bar{D}) \isom C_{-h, t}^*(D)$
    \end{enumerate}
    where $-D$ is $D$ with its orientation reversed, and $\bar{D}$ is the mirror image of $D$.
\end{proposition}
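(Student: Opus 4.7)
The proof consists of three independent checks, one per part, each unwinding the definition of $C_{h,t}(D;R)$ reviewed above.

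For part (1), I would observe that reversing the orientation on every component of $D$ leaves each crossing sign unchanged, because flipping both strands at a crossing preserves the sign. Hence $n^+$, $n^-$, and the homological shift $-n^-$ are unchanged. The 0- and 1-resolutions are defined purely from the unoriented crossing data, so the entire cube, the $s$-circles, the Frobenius edge maps $m$ and $\Delta$, and the q-degree formula are literally identical for $D$ and $-D$. Equality $C_{h,t}(-D) = C_{h,t}(D)$ then follows tautologically.

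For part (2), the plan is to exhibit the canonical bijection: resolutions of $D \sqcup D'$ correspond to pairs $(s, s')$ of resolutions of $D$ and $D'$, and the $s \sqcup s'$-circles form the disjoint union of the circles of the two pieces. This yields $V(D \sqcup D', s \sqcup s') = V(D, s) \otimes V(D', s')$ and presents the cube of $D \sqcup D'$ as the product of the two smaller cubes. Since the edge map assigned at a crossing of $D$ (resp. of $D'$) acts only on the first (resp. second) tensor factor, after folding and shifting by $-(n^-(D) + n^-(D'))$ one recovers the tensor product of chain complexes. The only tedious point is checking that the chosen cube-edge sign convention agrees with the Koszul signs on the tensor product; this I would do by comparing directly with the standard convention already used in the unshifted complex $\bar{C}_{h,t}$. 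Additivity of the q-degree shift under disjoint union follows because $r(\cdot,\cdot)$, $|s|$, and $n^\pm$ are each additive.

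For part (3), taking the mirror image swaps over- and under-strands at every crossing, which simultaneously exchanges the 0- and 1-resolutions and swaps positive and negative crossings; in particular $n^+(\bar{D}) = n^-(D)$, and every cube edge gets its direction reversed, with merges and splits interchanged. Reversing every arrow and reading $m$ as $\Delta$ (and vice versa) is exactly what dualization does to the folded complex, so the content of the claim reduces to identifying the dual Frobenius structure on $A_{h,t}^*$ with that of $A_{-h,t}$. I would carry this out by fixing the linear identification $A_{h,t}^* \to A_{-h,t}$ sending the dual basis element $1^*$ to $X$ and $X^*$ to $1$, and then verifying directly, from the explicit formulas for $m_{h,t}$ and $\Delta_{h,t}$, that $m_{h,t}^* = \Delta_{-h,t}$ and $\Delta_{h,t}^* = m_{-h,t}$. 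The sign $-h$ appears via the coefficient of $1 \otimes 1$ in $\Delta_{h,t}(1) = X \otimes 1 + 1 \otimes X - h\, 1 \otimes 1$, which dualizes with the opposite sign into the multiplication on $A^*$.

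The main obstacle I expect is the bookkeeping in part (3): tracking the edge-sign conventions so that cube-reversal really matches the coboundary signs of the dual complex, and ensuring that the q-degree inverts correctly so that the isomorphism lands in the right bigrading (or filtration) after the shift by $-n^+(D) = -n^-(\bar{D})$. Once the identification $A_{h,t}^* \cong A_{-h,t}$ is established as Frobenius algebras and matched with the cube reversal crossing-by-crossing, the three parts assemble routinely.
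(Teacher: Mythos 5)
The paper gives no proof of this proposition (it is simply quoted from Khovanov's papers), and your argument is the standard one and is correct. The only genuinely non-routine point, the Frobenius algebra identification $A_{h,t}^{*} \cong A_{-h,t}$ via $1^{*} \mapsto X$, $X^{*} \mapsto 1$, does check out: for instance $\Delta_{h,t}^{*}(1^{*} \otimes 1^{*}) = -h\,1^{*} + t\,X^{*}$ corresponds to $X \cdot X = -hX + t$ in $A_{-h,t}$, while $m_{h,t}^{*}(X^{*}) = 1^{*} \otimes X^{*} + X^{*} \otimes 1^{*} + h\,X^{*} \otimes X^{*}$ corresponds to $\Delta_{-h,t}(1)$, and the unit and counit also match under this map, so the crossing-by-crossing cube reversal and the degree bookkeeping you describe assemble into the claimed isomorphism.
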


    \subsection{Lee's classes}

In \cite{lee2005endomorphism} Lee constructed a set of classes $[\ca(D, o)] \in H_\Lee(D; \QQ)$, one for each alternative orientation $o$ on $D$, and proved that they form a basis. By following the arguments given by Mackaay, Turner, Vaz in \cite{mackaay2007remark}, these classes can be generalized as elements in $H_{h, t}(D; R)$. Throughout this section, we assume the following condition holds: 

\begin{condition} \label{cond:ab-cond1}
	There exists $c \in R$ such that $h^2 + 4t = c^2$ and $(h \pm c)/2 \in R$.
\end{condition}
Fix one $c = \sqrt{h^2 + 4t}$, and let 
\[
    u = (h - c)/2,\quad 
    v = (h + c)/2 \ \in \ R.
\]
Then $X^2 - hX - t$ factors as $(X - u)(X - v)$ in $R[X]$. Also let 
\[
    \a = X - u,\quad 
    \b = X - v \ \in \ A.
\]
Then obviously $\a\b = \b\a = 0$. Also with $\a - \b = v - u = c$, we have:
\begin{alignat*}{2}
	m(\a \otimes \a) &= c\a, 
	    &\hspace{1cm} 
	     \Delta(\a) &= \a \otimes \a, \\
	m(\a \otimes \b) &= 0, 
	    &\Delta(\b) &= \b \otimes \b \\
	m(\b \otimes \a) &= 0 &&\\
	m(\b \otimes \b) &= -c\b &&
\end{alignat*}
Here we call $\a$ and $\b$ \textit{colors}. For any state $s$, a coloring on the $s$-circles defines an element in $V(D, s)$, which we call a \textit{colored state}. Recall that a link diagram possesses a unique \textit{orientation preserving state} $s$, where every state circle admits an orientation coherent with the given orientation of $D$. Such a state can be obtained by 0-resolving the positive crossings, and 1-resolving the negatives as in \Cref{fig:ori-pres-st}. The corresponding state circles are the \textit{Seifert circles}. We color the Seifert circles according to the following algorithm:

\begin{figure}[ht]
	\centering
    \includegraphics[scale=0.4]{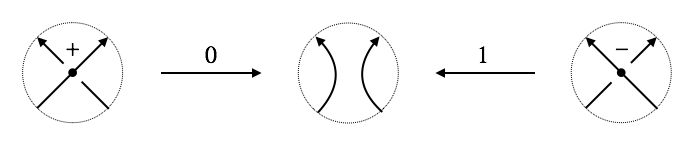}
	\caption{Orientation preserving resolution. \label{fig:ori-pres-st}}
\end{figure}

\begin{algorithm} \label{algo:ab-coloring}
    Color the regions of $\mathbb{R}^2$ divided by the Seifert circles in the checkerboard fashion, where the unbounded region is colored white. Color a  circle $\a$ if it sees a black region to the left with respect to the given orientation, otherwise color $\b$.
\end{algorithm}

\begin{figure}[ht]
	\centering
    \includegraphics[scale=0.35]{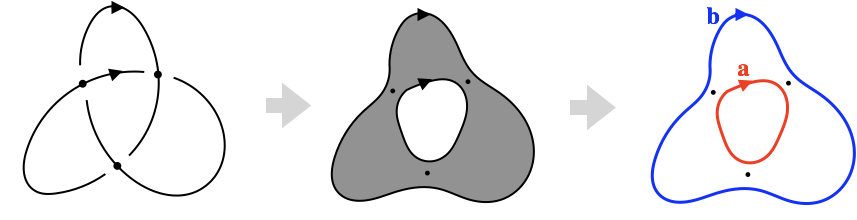}
	\caption{Coloring the Seifert circles by $\a$, $\b$. \label{fig:ab}}
\end{figure}

\begin{lemma} \label{lem:ab-bipartite}
    Every crossing of $D$ connects differently colored circles. In particular, no crossing connects a circle to itself. \qed
\end{lemma}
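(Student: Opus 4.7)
I would prove the claim by a local analysis at a single crossing. Fix a crossing $c$ of $D$ and a small disk $B$ around it. In the oriented resolution the crossing is replaced by two disjoint oriented arcs in $B$, each running from an incoming boundary point of $\partial B$ to an outgoing one. Since the two incoming corners of $\partial B$ are forced to be adjacent (they lie on one of the four sides of the square), a short case enumeration shows that the two arcs always point in the same direction. Label them $\gamma_L$ and $\gamma_R$ so that $\gamma_L$ lies on the left of $\gamma_R$ with respect to this common direction of motion.

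The two arcs cut $B$ into three regions: $U_L$ to the left of $\gamma_L$, a middle strip $M$ between the arcs, and $U_R$ to the right of $\gamma_R$. In the checkerboard coloring used by \Cref{algo:ab-coloring}, any two regions separated by a single arc of a Seifert circle carry opposite colors, so $U_L$ and $M$ carry opposite colors. The coloring algorithm assigns to a Seifert circle the color $\a$ or $\b$ according to the color of the region immediately on its left. Because $\gamma_L$ and $\gamma_R$ point in a common direction, the region immediately on the left of $\gamma_L$ is $U_L$, while the region immediately on the left of $\gamma_R$ is $M$. These carry opposite colors, so the Seifert circles containing $\gamma_L$ and $\gamma_R$ must receive different colors in $\{\a, \b\}$. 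Since a single Seifert circle is assigned a single color, $\gamma_L$ and $\gamma_R$ must lie on distinct Seifert circles, which handles the ``in particular'' clause.

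The only step requiring real care is verifying that the two arcs of the oriented resolution always point in a common direction, which comes down to enumerating the possible in/out patterns at the four corners of $\partial B$ and checking in each case that the unique non-crossing matching of ``in'' corners to ``out'' corners yields co-oriented arcs. Once this direction-matching is in hand, the coloring argument runs exactly as above, and no further global input about the Seifert-circle configuration or about over/under data at the crossing is needed.
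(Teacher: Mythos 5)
Your argument is correct: the co-orientation of the two arcs of the oriented resolution plus the checkerboard property of \Cref{algo:ab-coloring} is exactly the (short) argument the paper leaves implicit, as the lemma is stated there without proof. Your write-up simply fills in the local case check that the paper treats as evident, so there is nothing to correct.
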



Denote by $\ca(D) \in V(D, s)$ the colored state obtained by \Cref{algo:ab-coloring}. If we forget the given orientation of $D$, there are $N = 2^{|D|}$ possible orientations on the underlying unoriented diagram of $D$. We call each of them an \textit{alternative orientation} of $D$. For each alternative orientation $o$, there is the corresponding orientation preserving state $s_o$, and we obtain an element $\ca(D, o) \in V(D, s_o)$ by the same procedure. 

\begin{proposition} \label{lem:canon-cycle-is-a-cycle}
    Each $\ca(D, o)$ is a cycle in $C_{h, t}(D; R)$. \qed
\end{proposition}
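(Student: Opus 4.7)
The plan is to verify directly that $d(\ca(D, o)) = 0$ by showing that every local contribution to the differential vanishes. First I would note that $\ca(D, o)$ lives in the summand $V(D, s_o)$ for the orientation-preserving state $s_o$, so its differential takes values in $\bigoplus_{s_o \prec s'} V(D, s')$. The states $s'$ with $s_o \prec s'$ are obtained by flipping a single crossing's resolution from $0$ to $1$; at $s_o$ these are exactly the crossings that are positive with respect to $o$.

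Next, at each such crossing $c$, I would identify the local piece of the differential. It acts on the tensor factors corresponding to the two $s_o$-circles meeting at $c$ (and as the identity on the remaining factors) by either the multiplication $m$ or the comultiplication $\Delta$, depending on whether the flip merges two circles into one or splits one circle into two. The crucial combinatorial input is \Cref{lem:ab-bipartite}: no crossing of $D$ connects an $s_o$-circle to itself. This rules out the splitting case, so every local contribution is a merge and the local map is $m$.

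Finally, \Cref{lem:ab-bipartite} also asserts that the two circles meeting at $c$ carry different colors, so the inputs to $m$ at this crossing are of the form $\a \otimes \b$ or $\b \otimes \a$. The multiplication table listed immediately above \Cref{algo:ab-coloring} gives $m(\a \otimes \b) = m(\b \otimes \a) = 0$. Therefore every local term in $d(\ca(D, o))$ vanishes identically, irrespective of the sign conventions used in folding the cube of resolutions, and summing yields $d(\ca(D, o)) = 0$ as claimed.

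The only nontrivial ingredient is \Cref{lem:ab-bipartite}, which is already in place, so there is no real obstacle; the proof is essentially the observation that the $\a/\b$-coloring rule is engineered precisely to match the annihilation relations between $\a$ and $\b$ in the Frobenius algebra $A_{h, t}$.
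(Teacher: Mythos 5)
Your argument is correct and is exactly the reasoning the paper leaves implicit (the proposition is stated with its proof omitted): at the orientation-preserving state every flip of a $0$-resolution merges two differently colored circles by \Cref{lem:ab-bipartite}, and $m(\a \otimes \b) = m(\b \otimes \a) = 0$ kills every term of the differential regardless of signs. Nothing to add.
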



\begin{definition}[$\ca$-cycles, $\ca$-classes]
    We call the cycles $\ca(D, o)$ the \textit{$\ca$-cycles} of $D$, and the homology classes $[\ca(D, o)]$ the \textit{$\ca$-classes} of $D$. If $o$ is the given orientation of $D$, we simply denote the corresponding cycle by $\ca(D)$. We call it the \textit{$\ca$-cycle} of $D$, and $[\ca(D)]$ the \textit{$\ca$-class} of $D$. 
\end{definition}

Lee proved in \cite{lee2005endomorphism} that the $\QQ$-Lee homology of $D$ is freely generated by the $\ca$-classes, so in particular $H_\Lee(D; \QQ) \isom \QQ^N$. This generalizes as:

\begin{proposition} \label{prop:ab-gen}
	If $c = \sqrt{h^2 + 4t}$ is invertible in $R$, then $H_{h, t}(D; R)$ is freely generated over $R$ by the $\ca$-classes. In particular $H_{h, t}(D; R) \isom R^N$.
\end{proposition}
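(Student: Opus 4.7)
The plan is to generalize Lee's classical argument via the strategy of Mackaay, Turner and Vaz in \cite{mackaay2007remark}. Invertibility of $c$ allows us to diagonalize the Frobenius algebra $A = A_{h,t}$ via orthogonal idempotents, which in turn splits the Khovanov cube as a direct sum of subcomplexes indexed by alternative orientations. Each such subcomplex will then collapse, thanks to invertibility of $c$, to a single copy of $R$ generated by the corresponding $\ca$-class.

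First I would set $e_+ := \a/c$ and $e_- := -\b/c$ in $A$. Using the structure constants for $\a, \b$ displayed before \Cref{lem:ab-bipartite}, a direct computation yields
\[
    e_+ + e_- = 1, \quad e_\pm^2 = e_\pm, \quad e_+ e_- = 0, \quad m(e_\pm \ox e_\pm) = e_\pm, \quad \Delta(e_\pm) = \pm c \,(e_\pm \ox e_\pm).
\]
Thus $\{e_+, e_-\}$ is a free $R$-basis of $A$ that simultaneously diagonalizes $m$ and $\Delta$. Tensoring along the $s$-circles decomposes each $V(D,s) = A^{\ox r(D,s)}$ as $\bigoplus_\sigma V(D,s)_\sigma$ into rank-one free summands, indexed by labelings $\sigma \colon \{s\text{-circles}\} \to \{+, -\}$. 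The diagonal relations above ensure that each cube-edge differential sends $V(D,s)_\sigma$ into a unique $V(D,s')_{\sigma'}$ (or to zero): merging is nonzero (with scalar $\pm 1$) only when the two merging circles share a label, and splitting is always nonzero (with scalar $\pm c$) and propagates the parent label. Consequently $C_{h,t}(D; R)$ decomposes as a direct sum of subcomplexes indexed by \emph{compatible} families of labelings, i.e.\ families consistent under these transitions.

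The next step is to identify these compatible families with alternative orientations of $D$. Given $o$, \Cref{algo:ab-coloring} furnishes a labeling of the Seifert circles at state $s_o$ (via the rescaling $\a = c\,e_+$, $\b = -c\,e_-$), which by \Cref{lem:ab-bipartite} and \Cref{lem:canon-cycle-is-a-cycle} extends uniquely to a compatible family throughout the cube; conversely every compatible family arises from a unique $o$. This partitions $C_{h,t}(D;R)$ into $N = 2^{|D|}$ subcomplexes $C_o$, each containing $\ca(D,o)$ as a distinguished generator in the homological degree of $s_o$. To finish, for each $C_o$ I would perform a Gauss elimination along the edges carrying the invertible scalar $\pm c$, cancelling all summands except the one at $s_o$, so that $C_o$ becomes chain-homotopy equivalent to $R$ generated by $[\ca(D,o)]$.

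The main obstacle I expect is this last step: arranging the Gauss elimination so it terminates on exactly one surviving summand, and verifying that the surviving generator is truly a unit multiple of $\ca(D,o)$ rather than merely some element of $V(D, s_o)_{\sigma_o}$. This requires careful tracking of Khovanov's cube-edge signs together with the $\pm c$ factors propagated through the splittings. The bijection between compatible families and alternative orientations in the preceding step is essentially combinatorial and follows from \Cref{lem:ab-bipartite}, but the sign- and scalar-bookkeeping during the elimination is where the detailed work lies.
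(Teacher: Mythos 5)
Your idempotent set-up is fine ($e_+ = \a/c$, $e_- = -\b/c$ do diagonalize $m$ and $\Delta$, and $\ca(D,o)$ is a unit multiple of the corresponding idempotent-labeled state), but the key combinatorial claim --- that the resulting summands (``compatible families'') are in bijection with the alternative orientations of $D$ --- is false, and this is exactly where the content of the proposition lies. The equivalence classes generated by the nonzero edge components include every labeled state in which some crossing joins two circles carrying the \emph{same} label, and such classes do not come from any orientation. Already for the one-crossing unknot diagram the cube has a two-circle state and a one-circle state joined by a single (co)multiplication edge, and the generators fall into four classes, $\{e_+\ox e_+,\ e_+\}$, $\{e_-\ox e_-,\ e_-\}$, $\{e_+\ox e_-\}$, $\{e_-\ox e_+\}$, while $N = 2$. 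If, as you assert, every summand collapsed to a copy of $R$ generated by an $\ca$-class, the homology would be free of rank equal to the number of summands, which in general strictly exceeds $N$ --- contradicting the very statement being proved. What must actually be shown is that the summands \emph{not} arising from orientations are acyclic; this is where invertibility of $c$ really enters (each such summand contains edges that are multiplication by the unit $\pm c$, and one must organize the cancellation so that the whole summand dies, not merely shrinks). That acyclicity argument is precisely the admissible-coloring decomposition of Wehrli that the paper invokes, with the details carried out in Lewark's paper.

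By contrast, the summands that do come from orientations need no Gauss elimination at all: by \Cref{lem:ab-bipartite} every crossing joins two differently labeled circles of $s_o$ and no circle to itself, so all outgoing merge components of the $\ca(D,o)$-summand vanish, and every incoming component is a splitting $\Delta(e_\pm) = \pm c\, e_\pm \ox e_\pm$, which only hits same-labeled pairs; hence the class of $(s_o,\sigma_o)$ is a singleton contributing exactly one copy of $R$ generated by $[\ca(D,o)]$. So the overall strategy matches the proof the paper has in mind, but the difficulty you flagged (sign bookkeeping inside the orientation summands) is misplaced: the real work is proving acyclicity of all the other summands, whose existence your proposal denies.
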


Lee's proof cannot be applied directly, since it uses Hodge theory and requires that $R$ is a field. However there is an alternative proof by the \textit{admissible coloring decomposition} of $C_{h, t}(D; R)$, proposed by Wehrli in \cite[Remark 5.4]{Wehrli:uy}. We will briefly explain how this is applicable to our case. A \textit{coloring} of a diagram $D$ is an assignment of either $\a$ or $\b$ on each arc of $D$. A coloring is \textit{admissible} if each crossing admits a resolution such that the arc segments can be colored accordingly. For an admissibly colored diagram, every crossing is locally colored as one of the three of \Cref{fig:loc-col}. Now if $c$ is invertible, then the colored states form a basis of $C_{h, t}(D; R)$. The idea of the proof is to decompose $C_{h, t}(D; R)$ by admissible colorings, and prove that the homology is exactly the subcomplex generated by the $\ca$-classes (the remaining part is acyclic). A detailed proof can be found in Lewark's paper \cite[Lemma I.14]{lewark2009rasmussen}.

\begin{figure}[ht]
	\centering
    \includegraphics[scale=0.5]{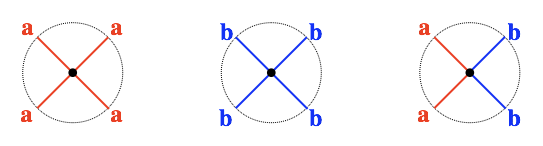}
	\caption{Local colorings of an admissibly colored diagram. \label{fig:loc-col}}
\end{figure}

\begin{corollary} \label{cor:H(D)-tor}
	If $c = \sqrt{h^2 + 4t} \neq 0$, then $H_{h, t}(D; R)$ contains only $c$-torsions, \text{i.e.} all torsions are annihilated by multiplying some power of $c$.
\end{corollary}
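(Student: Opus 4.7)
The plan is to localize at the element $c$ and transfer the freeness granted by \Cref{prop:ab-gen} back to torsion information about $H_{h,t}(D;R)$. First I would form $R_c := R[c^{-1}]$. Since $c$ becomes invertible there, \Cref{prop:ab-gen} applied with ground ring $R_c$ gives that $H_{h,t}(D; R_c)$ is free over $R_c$, and in particular torsion-free.

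Next I would compare the two homologies. Because $C_{h,t}(D; R)$ is a complex of finitely generated free $R$-modules, tensoring term by term with $R_c$ produces the chain complex $C_{h,t}(D; R_c)$. Since localization is an exact functor, it commutes with taking homology, giving a natural isomorphism
\[
H_{h,t}(D; R) \otimes_R R_c \;\cong\; H_{h,t}(D; R_c).
\]
For any torsion element $\tau \in H_{h,t}(D; R)$, pick a nonzero $r \in R$ with $r\tau = 0$. Under the standing assumption that $R$ is an integral domain, $r$ remains nonzero in the domain $R_c$. The image $\tau \otimes 1$ on the right-hand side satisfies $r(\tau \otimes 1) = 0$; since $H_{h,t}(D; R_c)$ is torsion-free, this forces $\tau \otimes 1 = 0$. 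Invoking the standard description of the kernel of $M \to M \otimes_R R_c$ for an $R$-module $M$, the vanishing $\tau \otimes 1 = 0$ is equivalent to $c^n \tau = 0$ for some $n \geq 0$, which is exactly the claim.

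I do not foresee a serious obstacle; the proof is essentially bookkeeping once the localization viewpoint is adopted. The facts to cite are the exactness of localization, so that it commutes with taking homology, and the elementary identification of the kernel of $M \to M[c^{-1}]$ with the $c$-power-torsion submodule. The substantive input, supplying the free structure after inverting $c$, is \Cref{prop:ab-gen}.
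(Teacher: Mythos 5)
Your proposal is correct and follows essentially the same route as the paper: localize at $c$, use flatness/exactness to identify $H_{h,t}(D;R)\otimes_R R_c \cong H_{h,t}(D;R_c)$, apply \Cref{prop:ab-gen} over $R_c$ to get freeness, and conclude that every torsion class dies in the localization, i.e.\ is killed by a power of $c$. You merely spell out the last step (the kernel of $M \to M[c^{-1}]$ being the $c$-power torsion) more explicitly than the paper does.
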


\begin{proof}
    Let $R_c = R[c^{-1}]$ be the ring of $R$ localized by powers of $c$. Since $R_c$ is flat over $R$ we have $H_{h, t}(D; R) \otimes R_c \isom H_{h, t}(D; R_c) \isom (R_c)^N$. The result being free (hence torsion-free) implies that $H_{h, t}(D; R)$ has only $c$-torsions.
\end{proof}

\begin{remark}
    The situation is apparently different when $c = 0$. In \cite{Mukherjee:ww}, it is shown that for any $2 \leq n \leq 8$ there are infinite families of links whose $\ZZ$-Khovanov homology contains $\ZZ_n$-summands.
\end{remark}

Finally we state the variance of the $\ca$-classes under the Reidemeister moves. First we define:

\begin{definition}[$\cb$-cycle]
    For any alternative orientation $o$ of $D$, define 
    \[
        \cb(D, o) = \ca(D, -o)
    \] 
    where $-o$ is the reversed orientation of $o$.
\end{definition}

The following proposition is a generalization of \cite[Proposition 2.3]{rasmussen2010khovanov}. The result is essential for the well-definedness of the link invariant $\s_c(L)$ in \Cref{sec:sc-def}. 

\begin{proposition} \label{prop:cacb-variance-under-rho}
    Let $(h, t)$ be a pair satisfying \Cref{cond:ab-cond1}, and let $c = \sqrt{h^2 + 4t}$. Suppose $D, D'$ are two diagrams related by a Reidemeister move. There is an isomorphism $\rho: H_{h, t}(D; R) \rightarrow H_{h, t}(D'; R)$ such that for any alternative orientation $o$ of $D$ (and the corresponding orientation $o'$ of $D'$)
	\begin{align*}
	    [\ca(D', o')] &= \epsilon c^j \rho[\ca(D, o)], \\
		[\cb(D', o')] &= \epsilon' c^j \rho[\cb(D, o)]
    \end{align*}
    with some $j \in \{0, \pm1 \}$ and $\epsilon, \epsilon' \in \{\pm1\}$ satisfying $\epsilon \epsilon' = (-1)^j$. (Here $c$ is not necessarily invertible, so the equation $z = c^j w$ is to be understood as $c^{-j} z = w$ when $j < 0$.) Moreover $j$ is determined as in \Cref{table:RM-k-corresp} by the type of the move and the difference of the numbers of Seifert circles (with respect to $o, o'$).
    \begin{table}[ht]
        \centering
        \begin{tabular}{lr|r}
             \toprule
             Type & $\Delta r$ & $j$ \\
             \midrule
             RM1$_L$ & 1  & 0  \\
             RM1$_R$ & 1  & 1 \\
             \midrule
             RM2     & 0  & 0  \\
                     & 2  & 1 \\
             \midrule
             RM3     & 0  & 0  \\
                     & 2  & 1  \\
                     & -2 & -1 \\
             \bottomrule
        \end{tabular}
        \caption{Exponent of $c$ corresponding to the Reidemeister moves}
        \label{table:RM-k-corresp}
    \end{table}
\end{proposition}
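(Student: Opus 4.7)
The strategy is to verify the statement move-by-move at the chain level, using the standard Reidemeister chain homotopy equivalences $\rho$ from Khovanov's construction. These maps depend only on the Frobenius algebra structure of $A_{h,t}$ and on the unoriented local picture of each move, so a single $\rho$ applies to every alternative orientation $o$. The task then reduces to computing the image of the $\ca$-cycle, which is supported on the orientation-preserving state $s_o$.

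The essential algebra is
\[
    m(\a \ox \a) = c\a, \quad m(\b \ox \b) = -c\b, \quad m(\a \ox \b) = 0, \quad \Delta(\a) = \a \ox \a, \quad \Delta(\b) = \b \ox \b.
\]
A factor of $\pm c$ arises precisely when $\rho$ merges two same-colored Seifert circles; splits via $\Delta$ and opposite-color merges are $c$-free. For RM1, the orientation-preserving state gains or loses one Seifert circle, and depending on handedness the relevant local map is a $\Delta$-insertion (left-handed, $j = 0$) or a multiplication paired with a counit (right-handed, $j = 1$). For RM2 with $\Delta r = 0$, the two new crossings sit in opposite-color configurations at $s_o$ so $\rho$ sends $\ca(D, o)$ directly to $\pm \ca(D', o')$; for $\Delta r = 2$, a new pair of same-colored circles is created by $\Delta$ and then merged by $m$, contributing one factor of $c$. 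Each entry of \Cref{table:RM-k-corresp} for RM1 and RM2 follows by a direct local check.

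The main obstacle will be RM3, whose chain equivalence is more intricate and whose table entry lists three possible $\Delta r$ depending on the mutual orientations of the three strands of the triangle. The cleanest route I would take is Bar-Natan's tangle-theoretic framework: express the RM3 equivalence as a composition of partial-resolution moves that reduce to RM2 configurations, then read off the cumulative $c$-factor from the number of same-colored merges at $s_o$. This splits RM3 into subcases indexed by the mutual orientations, each settled by the RM1/RM2 analysis. The case $\Delta r = -2$ is where $j = -1$ appears, and one must verify the predicted equation $c\,[\ca(D', o')] = \epsilon\, \rho[\ca(D, o)]$ on the nose, since multiplication by $c$ need not be invertible on $H_{h,t}(D'; R)$.

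Finally, the $\cb$-statement follows from the $\a \leftrightarrow \b$ symmetry. Swapping colors corresponds to $c \mapsto -c$ (since $\a - \b = c$) while leaving the sign contributions of $\rho$ unchanged, so the identical computation with $\cb$ in place of $\ca$ yields $\epsilon(-c)^j \rho[\cb(D, o)] = (-1)^j \epsilon\, c^j \rho[\cb(D, o)]$; hence $\epsilon' = (-1)^j \epsilon$ and $\epsilon \epsilon' = (-1)^j$, as claimed.
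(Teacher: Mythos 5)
Your overall skeleton (move-by-move chain-level verification with the standard Khovanov equivalences, case division by orientations for RM2/RM3, the observation that $\rho$ is orientation-blind so one map serves all alternative orientations, care with the non-invertibility of $c$ in the $j=-1$ case, and the $\a\leftrightarrow\b$ swap giving $c\mapsto -c$ for the $\cb$-statement) matches the paper's proof in \Cref{sec:canon-classes-and-RMs}. But the mechanism you propose for producing the table is where the gap lies. Your guiding principle --- ``a factor of $\pm c$ arises precisely when $\rho$ merges two same-colored Seifert circles; splits via $\Delta$ and opposite-color merges are $c$-free'' --- is not correct as stated: $m(\a\ox\b)=0$, so an opposite-color merge annihilates rather than passes the class through, and more importantly the Reidemeister maps $\rho$ are not compositions of single merges/splits applied to the oriented state. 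In the nontrivial cases ($\Delta r=\pm2$) the chain $\rho(\ca(D,o))$ is supported on a state of $D'$ that is \emph{not} the orientation-preserving one, and the asserted relation $[\ca(D',o')]=\epsilon c^{j}\rho[\ca(D,o)]$ holds only in homology: the paper establishes it by exhibiting explicit elements whose differentials realize the discrepancy (the chain $x$ with $dx=(\ca'+c\gamma(\ca))+c\ca$ for RM2, and the chains $x,y,z$ for RM3), crucially using $\a=\b+c$. Nothing in your ``direct local check'' produces these bounding chains, and the heuristic by itself cannot even decide the $\Delta r=0$, oppositely-oriented RM2 subcase, where the identification again holds only up to an explicit boundary and with a sign.

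For RM3 you only state a plan (Bar-Natan's tangle-theoretic reduction to RM2-type partial resolutions) rather than an argument; that route could plausibly be made to work, but as written it is a deferral of exactly the hardest case, including the $\Delta r=-2$ entry where one must exhibit the relation in the form $c[\ca(D',o')]=\epsilon\,\rho[\ca(D,o)]$ (the paper does this via $c\ca'\sim\rho(\ca)$ from the explicit chains). By contrast, your RM1$_R$ claim of a direct multiplication-plus-counit check is plausible (the paper instead factors RM1$_R$ through RM2 and RM1$_L^{-1}$), and your $c\mapsto -c$ symmetry argument for $\cb$, giving $\epsilon\epsilon'=(-1)^j$, is essentially the same bookkeeping the paper carries out with the color-flipped chains $\bar{x}$. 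To close the gap you need to replace the merge-counting heuristic with the actual homology-level computations: for each move and each orientation/connectivity subcase, either show $\gamma(\ca)=0$ so that $\rho(\ca)$ is literally $\ca'$, or produce the explicit element whose boundary identifies $\rho(\ca)$ with $\pm c^{j}\ca'$ (resp.\ $c\ca'$ with $\pm\rho(\ca)$ when $j=-1$).
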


We give a detailed proof in \Cref{sec:canon-classes-and-RMs}. Here we only state that $\rho$ is the isomorphism constructed for the proof of \Cref{thm:t-inv}. From \Cref{table:RM-k-corresp}, we see that the exponent $j$ can be expressed by a single equation:

\begin{corollary} \label{cor:rw-2j}
    \[ j = \frac{\Delta r - \Delta w}{2} \]
    where $w$ denotes the writhe, $r$ denotes the number of Seifert circles, and the prefixed $\Delta$ denotes the difference of the corresponding values of $D$ and $D'$.
\end{corollary}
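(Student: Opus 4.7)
The plan is to verify the identity $j = (\Delta r - \Delta w)/2$ by direct case-by-case inspection against Table \ref{table:RM-k-corresp}, using only the elementary fact that the writhe change $\Delta w$ is determined by the type of Reidemeister move.

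First I would dispose of RM2 and RM3. Both moves are writhe-preserving, so $\Delta w = 0$ in every row of those two groups, and the formula reduces to $j = \Delta r/2$. The table lists pairs $(\Delta r, j) = (0,0), (2,1)$ for RM2 and $(0,0), (2,1), (-2,-1)$ for RM3, and in all five rows $j$ is exactly half of $\Delta r$, so the identity holds by inspection.

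Next I would handle the two RM1 cases. Both rows carry $\Delta r = 1$, which is immediate because inserting a single kink into the diagram creates exactly one additional Seifert circle regardless of the sign of the kink. The subscripts L and R distinguish the two chiralities, so $\Delta w = +1$ for one and $\Delta w = -1$ for the other. Substituting into $(\Delta r - \Delta w)/2$ then yields $0$ and $1$ respectively, which matches the table entries $j = 0$ for RM1$_L$ and $j = 1$ for RM1$_R$ once one fixes the convention that RM1$_L$ introduces a positive kink and RM1$_R$ a negative one. That convention is already implicit in the case analysis of Proposition \ref{prop:cacb-variance-under-rho}.

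There is no substantive obstacle: the corollary is a compact reformulation of the table, and the only thing to pin down is the writhe change associated to each labelled move, which is either zero (RM2, RM3) or $\pm 1$ with sign forced by matching the formula to the two RM1 rows.
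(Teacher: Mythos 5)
Your proposal is correct and is essentially the paper's own argument: the corollary is stated as an immediate reading of Table \ref{table:RM-k-corresp}, with $\Delta w = 0$ for RM2/RM3 and $\Delta w = \pm 1$ for the two RM1 chiralities, exactly as you check. Your identification of RM1$_L$ as the positive kink is indeed the paper's convention (visible in the appendix, where the orientation-preserving state $0$-resolves the new crossing), so there is no circularity issue.
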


From \Cref{prop:ab-gen} and \ref{prop:cacb-variance-under-rho}, we conclude that the situation is completely analogous to $\QQ$-Lee theory when $c$ is invertible: the $\ca$-classes form a basis of $H_{h, t}(D; R)$ and are invariant (up to unit) under the Reidemeister moves.
    \subsection{Reduction of parameters} \label{subsec:H_ht-rel}

\begin{definition}
    Let $A$ be a Frobenius algebra over $R$, and $\theta$ be an invertible element in $A$. The \textit{twist} of $A$ by $\theta$ is another Frobenius algebra $(A, m, \iota, \Delta', \epsilon')$ with the same algebra structure as $A$, but with a different coalgebra structure given by:
    \[
        \Delta'(x) = \Delta(\theta^{-1}x),
        \quad 
        \epsilon'(x) = \epsilon(\theta x).
    \]
\end{definition}

\begin{lemma} [{\cite[Proposition 3]{khovanov2004}}] \label{lem:twist}
    Let $A$ be a commutative Frobenius algebra, and $A'$ be the twist of $A$ by $\theta$. For any link diagram $D$, there is an isomorphism between the chain complexes $C_A(D)$ and $C_{A'}(D)$. \qed
\end{lemma}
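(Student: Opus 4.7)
The plan is to construct the isomorphism $\phi : C_A(D) \to C_{A'}(D)$ state-by-state on the cube of resolutions. Because $A$ and $A'$ share the underlying $R$-module and the multiplication $m$, the two chain complexes agree on every vertex module $V(D,s) = A^{\otimes r(D,s)}$ and on every merge edge; they differ only on split edges, where $\Delta$ is replaced by $\Delta'(x) = \Delta(\theta^{-1} x)$. I would take $\phi_s : V(D,s) \to V(D,s)$ to be a tensor product $\bigotimes_{i=1}^{r(D,s)} \mu_{\theta^{a(s,i)}}$, with $\mu_\alpha$ denoting multiplication by $\alpha \in A$ on the indicated tensor factor, and with integer weights $a(s,i) \in \ZZ$ attached to each $s$-circle to be determined.

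Requiring $\phi$ to intertwine the cube maps imposes a local constraint on each edge. For a merge edge, where circles $i, j$ of $s$ combine into circle $\ell$ of $s'$, the relation $m(\theta^{a_1} x \otimes \theta^{a_2} y) = \theta^{a_1 + a_2}\, m(x \otimes y)$ forces
\[
    a(s, i) + a(s, j) = a(s', \ell).
\]
For a split edge, where circle $i$ of $s$ splits into circles $\ell, m$ of $s'$, the Frobenius identity $(\theta^a \otimes \theta^b)\Delta(x) = \Delta(\theta^{a + b} x)$ together with the $\theta^{-1}$ inserted in $\Delta'$ forces
\[
    a(s', \ell) + a(s', m) = a(s, i) - 1.
\]
All other circles keep the same weight across the edge.

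To produce a compatible system of weights I would fix the all-zero state $s_0$, assign $a(s_0, i) = 0$ to each of its circles, and propagate along paths in the cube, using a fixed convention (for instance, a basepoint on each arc of $D$ tracked through the cobordism) to distribute the $-1$ between the two daughter circles on a split edge. The hard part is verifying that this assignment is path-independent, i.e.\ that the local constraints are simultaneously satisfiable on every $2$-face of the cube. The counts of merges and splits along the two paths through a $2$-face are equal (both are determined by $r(s_{11}) - r(s_{00})$), so the accumulated $-1$'s balance; what remains is a local case check on how the circles meeting the two crossings of the face interact, paralleling the case analysis that proves $d \circ d = 0$ for the ordinary Khovanov differential.

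Once consistency is established, $\phi = \bigoplus_s \phi_s$ commutes with every cube map by construction, and each $\phi_s$ is invertible because $\theta$ is, giving a chain isomorphism $C_A(D) \cong C_{A'}(D)$ as desired. The sign adjustments folding the cube into a chain complex depend only on the cube structure and not on the Frobenius algebra, so they are preserved automatically.
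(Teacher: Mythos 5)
Your setup is correct: the two complexes share the vertex modules and the merge maps, the split maps differ by $\mu_{\theta^{-1}}$, and if $\phi_s$ is multiplication by $\theta^{a(s,i)}$ on the $i$-th circle then the edge conditions are exactly $a(s,i)+a(s,j)=a(s',\ell)$ for merges and $a(s',\ell)+a(s',m)=a(s,i)-1$ for splits (the latter using the Frobenius relation $(\theta^a\otimes\theta^b)\Delta(x)=\Delta(\theta^{a+b}x)$ and commutativity). Note that the paper itself does not prove this lemma; it is quoted from Khovanov's Proposition~3, so what you are attempting is precisely the content of that citation.

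The gap is that you never establish the existence of a consistent integral weight system, which is where all the substance lies. Your consistency argument for a $2$-face only matches the \emph{total} number of $-1$'s on the two paths; the per-circle distribution is exactly the issue, and it cannot be settled by the vague convention ``a basepoint on each arc tracked through the cobordism'' plus an unperformed case check. Indeed, natural local rules genuinely fail: already on the single $2$-face of the standard Hopf-link diagram (two crossings, each joining the same pair of circles of the all-$0$ state, so both paths are merge-then-split), a rule that distributes the $-1$ according to an independently chosen marked arc at each crossing gives the final weights $(u+v,-1)$ along one path and $(-1,u+v)$ along the other unless the two crossings' markings happen to be correlated; similar care is needed on split--split faces. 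Also note there is no purely local formula available: any assignment of the form ``one integer charge per $1$-resolved crossing placed on a circle'' has total weight $-|s|$, whereas the constraints force the total to be $-(|s|+r(D,s)-r(D,\mathbf{0}))/2$, and solving within such natural families produces the non-integral exponent $-\tfrac14(\#\text{arcs})-\tfrac12$. So the weights must be produced by a genuinely global construction (or a carefully chosen convention proved compatible on every face, keeping in mind that each edge is shared by many faces, so the choices cannot be made face by face). Until that construction and verification are supplied, the chain isomorphism has not been exhibited; this missing step is exactly Khovanov's Proposition~3, which the paper invokes rather than reproves.
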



\begin{lemma} \label{lem:frob-alg-isom}
    Let $(h, t)$, $(h', t')$ be two pairs satisfying \Cref{cond:ab-cond1}. Let $c = \sqrt{h^2 + 4t}$ and $c' = \sqrt{h'^2 + 4t'}$. If $c = \theta c'$ for some invertible $\theta \in R$, then there is a Frobenius algebra isomorphism from $A_{h, t}$ to another Frobenius algebra $B$ such that its twist by $\theta$ gives $A_{h', t'}$.
    \begin{figure}[H]
        \centering
	    \begin{tikzcd}
            A_{h, t} \arrow{rr}{\isom} & & B \arrow[dashed]{rr}{\theta-twist} & & A_{h', t'}
        \end{tikzcd}
    \end{figure}
	\noindent
    These maps satisfy the cocycle condition: For any three pairs such that the following three arrows exist, the diagram commutes.
	\begin{figure}[H]
	    \centering
        \begin{tikzcd}
            {A_{h, t}} \arrow[rr] \arrow[rd] &  & {A_{h'', t''}} \\
             & {A_{h', t'}} \arrow[ru] & 
        \end{tikzcd}
    \end{figure}
\end{lemma}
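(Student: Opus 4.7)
The plan is to construct an explicit $R$-algebra isomorphism $\phi\colon A_{h, t} \to A_{h', t'}$, define $B$ to be the algebra $A_{h', t'}$ equipped with the Frobenius structure transported from $A_{h, t}$ along $\phi$, and then verify that the $\theta$-twist of $B$ recovers $A_{h', t'}$ as a Frobenius algebra. By \Cref{cond:ab-cond1}, set $u = (h - c)/2$, $v = (h + c)/2$, $u' = (h' - c')/2$, $v' = (h' + c')/2$, all in $R$. Since $v - u = c = \theta c' = \theta(v' - u')$, the element
\[
    \beta := u - \theta u' = v - \theta v'
\]
is well-defined and lies in $R$; formally $\beta = (h - \theta h')/2$, but the two expressions above avoid dividing by $2$, which the hypotheses do not directly allow. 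Define $\phi(1) = 1$ and $\phi(X) = \theta X + \beta$, extended as an $R$-algebra map. A direct computation using $2\beta + \theta h' - h = 0$ and $c^2 = \theta^2 c'^2$ shows $\phi(X)^2 - h\phi(X) - t = 0$ in $A_{h', t'}$, so $\phi$ is well-defined on $A_{h, t}$; its matrix in the basis $\{1, X\}$ is upper triangular with invertible diagonal $(1, \theta)$, hence $\phi$ is an isomorphism. As a convenient byproduct, one obtains $\phi(X - u) = \theta(X - u')$ and $\phi(X - v) = \theta(X - v')$, i.e.\ $\phi$ carries colors to scaled colors.

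Next I would set $B := A_{h', t'}$ as an $R$-algebra and equip it with $\epsilon_B := \epsilon_{A_{h, t}} \circ \phi^{-1}$ and $\Delta_B := (\phi \otimes \phi) \circ \Delta_{A_{h, t}} \circ \phi^{-1}$; then $\phi\colon A_{h, t} \to B$ is a Frobenius algebra isomorphism tautologically. To show $B^{(\theta)} = A_{h', t'}$ as Frobenius algebras, I would invoke the uniqueness of the Frobenius structure on a given finite rank free $R$-algebra with a specified counit (implicit in the paper's definition of $A_{h, t}$), so that it suffices to check the counits agree. Using $\phi^{-1}(X) = \theta^{-1}(X - \beta)$, one computes
\[
    \epsilon_{B^{(\theta)}}(1) = \theta \cdot \epsilon_{A_{h, t}}(1) = 0, \qquad \epsilon_{B^{(\theta)}}(X) = \epsilon_B(\theta X) = \epsilon_{A_{h, t}}(X - \beta) = 1,
\]
matching $\epsilon_{A_{h', t'}}$. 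For readers unsatisfied with the uniqueness argument, the identity $\Delta_B = \theta \cdot \Delta_{A_{h', t'}}$ can be checked directly on $\{1, X\}$; the only nontrivial input needed is $h\beta - \beta^2 + t = \theta^2 t'$, which again reduces to $c^2 = \theta^2 c'^2$.

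For the cocycle condition, given a third pair $(h'', t'')$ with $c' = \theta_2 c''$ and $c = \theta_1 c'$, we have $c = \theta_1 \theta_2 c''$. At the level of underlying algebras, composing the two constructions sends $X$ to $\theta_1 \theta_2 X + (\theta_1 \beta_2 + \beta_1)$, whereas the direct construction sends $X$ to $\theta_1 \theta_2 X + (u - \theta_1 \theta_2 u'')$. The identity $\theta_1 \beta_2 + \beta_1 = \theta_1(u' - \theta_2 u'') + (u - \theta_1 u') = u - \theta_1 \theta_2 u''$ shows the two algebra maps coincide, and the Frobenius structures on the intermediate objects then match by the same uniqueness argument.

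The main obstacle is arranging that $\beta$ lies in $R$: the natural formula $(h - \theta h')/2$ involves a division by $2$ that \Cref{cond:ab-cond1} does not sanction, and the fix is the identity $u - \theta u' = v - \theta v'$, which expresses $\beta$ using only quantities guaranteed to be in $R$. Beyond that, the proof is a sequence of bookkeeping calculations powered by the single algebraic relation $c^2 = \theta^2 c'^2$.
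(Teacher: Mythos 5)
Your proposal is correct and follows essentially the same route as the paper: your map $\phi(X) = \theta X + (u - \theta u')$ is exactly the paper's isomorphism $f\colon X \mapsto \theta(X - u') + u$ onto $B$ (the $\theta^{-1}$-twist of $A_{h', t'}$), and your counit computation together with the uniqueness of $\Delta$ given the counit simply fills in the verification the paper leaves to the reader, as does your check of the cocycle identity $\theta_1\beta_2 + \beta_1 = u - \theta_1\theta_2 u''$.
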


\begin{proof}\ 
    Let $B$ be the $\theta^{-1}$-twist of $A_{h', t'}$. Define a ring homomorphism $f: R[X] \rightarrow R[X]$ by
    \[
        X\ \longmapsto\ \theta (X - u') + u.
    \]
    This descends to $f: A_{h, t} \rightarrow B$, and it can be shown that it is a Frobenius algebra isomorphism. The cocycle condition is also obvious.
\end{proof}

\begin{proposition} \label{prop:ht-relation} 
    Suppose the assumption of \Cref{lem:frob-alg-isom} holds. Then for any link diagram $D$, there is an isomorphism from $C_{h, t}(D; R)$ to $C_{h', t'}(D; R)$ under which each $\ca$-cycle in $C_{h, t}(D; R)$ is mapped to the corresponding $\ca$-cycle in $C_{h', t'}(D; R)$, multiplied by a power of $\theta$. In particular if $\theta = 1$, then the $\ca$-cycles correspond one-to-one.
\end{proposition}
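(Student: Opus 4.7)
The plan is to chain the two elementary constructions provided by \Cref{lem:frob-alg-isom} together with \Cref{lem:twist}. Writing the factorization $A_{h,t} \xrightarrow{f} B \xrightarrow{\tau} A_{h',t'}$, where $f$ is a Frobenius algebra isomorphism and $\tau$ is the $\theta$-twist, I would assemble a chain isomorphism $\Phi: C_{h,t}(D;R) \to C_{h',t'}(D;R)$ as the composition $\Phi = \Phi_\tau \circ \Phi_f$, with each factor induced at the level of the Khovanov cube.

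First I would use that the Frobenius algebra isomorphism $f: A_{h,t} \to B$ induces, at each state $s$, an $R$-linear map $f^{\otimes r(D,s)}: V(D,s) \to V_B(D,s)$ which commutes with the merging and splitting maps (since $f$ preserves both $m$ and $\Delta$); hence the resulting $\Phi_f$ is a chain isomorphism. Second, I would invoke \Cref{lem:twist} to obtain a chain isomorphism $\Phi_\tau: C_B(D;R) \to C_{h',t'}(D;R)$. The cocycle condition in \Cref{lem:frob-alg-isom} guarantees that $\Phi$ is independent of auxiliary choices and is compatible with composable triples of parameter points.

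The central computation is to track the $\ca$-cycle through $\Phi$. From $f(X) = \theta(X - u') + u$ together with the identity $c = \theta c'$, direct substitution yields
\[
    f(\ca) = \theta\ca', \qquad f(\cb) = \theta\cb',
\]
where $\ca' = X - u'$ and $\cb' = X - v'$ are the colors associated to $A_{h',t'}$; crucially, the $\ca$/$\cb$ coloring of the Seifert circles assigned by \Cref{algo:ab-coloring} depends only on the diagram and orientation, not on $(h,t)$. Applying $f^{\otimes r}$ at the orientation-preserving state $s_o$ (with $r = r(D, s_o)$ Seifert circles) then gives $\Phi_f(\ca(D, o)) = \theta^{r}\,\ca_B(D, o)$. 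Since the twist $\tau$ leaves the algebra structure (hence the colors $\ca', \cb'$) unchanged, and the chain isomorphism of \Cref{lem:twist} scales each enhanced state only by a power of $\theta$, the composite $\Phi$ sends $\ca(D,o) \in C_{h,t}(D;R)$ to $\theta^k\,\ca(D,o) \in C_{h',t'}(D;R)$ for some integer $k$. When $\theta = 1$ all scalings trivialize, so the $\ca$-cycles correspond one-to-one.

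The hard part will be pinning down the precise form of the twist isomorphism in \Cref{lem:twist}: one must unwind Khovanov's chain-level construction of the isomorphism between $C_B$ and $C_{A_{h',t'}}$ and confirm that on the summand $V(D,s)$ it acts by a scalar of the form $\theta^{a(s)}$, so that the total scaling stays within powers of $\theta$ rather than picking up stray factors. Modulo this essentially bookkeeping step, the argument reduces entirely to the pointwise identities $f(\ca) = \theta\ca'$ and $f(\cb) = \theta\cb'$ that match the two colorings under the Frobenius isomorphism.
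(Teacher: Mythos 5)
Your proposal is correct and follows essentially the same route as the paper: compose the chain isomorphism induced by the Frobenius algebra isomorphism $f$ of \Cref{lem:frob-alg-isom} with the twist isomorphism of \Cref{lem:twist}, and observe that both scale the $\ca$-cycles only by powers of $\theta$ (via $f(\a)=\theta\a'$, $f(\b)=\theta\b'$). The paper's proof is exactly this two-line observation, so no further comparison is needed.
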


\begin{proof}
    That the chain complexes are isomorphic is immediate from \Cref{lem:twist} and \ref{lem:frob-alg-isom}. Both $f$ and the chain map induced from a $\theta$-twist map any $\ca$-cycle in $C_{h, t}(D; R)$ to the corresponding $\ca$-cycle in $C_{h', t'}(D; R)$ multiplied by a power of $\theta$.
\end{proof}

\begin{corollary} \label{cor:H-normal-form}
    Suppose $(h, t)$ satisfies \Cref{cond:ab-cond1}. Let $c = \sqrt{h^2 + 4t}$. For any link diagram $D$, 
	\begin{enumerate}
	\item 
	    $C_{h, t}(D; R) \isom C_{c, 0}(D; R)$.
	    
	\item
	    $C_{h, t}(D; R) \isom C_{0, (c/2)^2}(D; R),\ $ if $c/2 \in R$\ \ (or equivalently $h/2 \in R$).
	\end{enumerate}
	In both cases, the $\ca$-cycles correspond one-to-one. \qed
\end{corollary}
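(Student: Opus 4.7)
The plan is to derive both isomorphisms as immediate applications of \Cref{prop:ht-relation} by making judicious choices of the pair $(h', t')$ and verifying that \Cref{cond:ab-cond1} is satisfied for each choice, with scaling factor $\theta = 1$ so that the $\ca$-cycles correspond on the nose.

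For part (1), I would set $(h', t') = (c, 0)$. Then $h'^2 + 4t' = c^2$, so I can take $c' = c$, giving $c = \theta c'$ with $\theta = 1$. To verify \Cref{cond:ab-cond1} for this new pair, observe that $(h' \pm c')/2 = (c \pm c)/2 \in \{0, c\} \subseteq R$, which lies in $R$ without any additional hypothesis. Applying \Cref{prop:ht-relation} then produces a chain isomorphism $C_{h,t}(D; R) \isom C_{c,0}(D; R)$, and since $\theta = 1$ the $\ca$-cycles are matched up multiplied by $\theta^{(\text{some integer})} = 1$, i.e.\ one-to-one.

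For part (2), I would instead set $(h', t') = (0, (c/2)^2)$, which requires precisely the hypothesis $c/2 \in R$ (equivalent to $h/2 \in R$ since $h = u + v$ and $c = v - u$, so $h/2 \in R$ iff $c/2 \in R$ given that $u, v \in R$). Then $h'^2 + 4t' = 4(c/2)^2 = c^2$, so again take $c' = c$ and $\theta = 1$. Checking \Cref{cond:ab-cond1}: $(h' \pm c')/2 = \pm c/2 \in R$ by assumption. Once more \Cref{prop:ht-relation} yields the desired isomorphism with $\ca$-cycles corresponding one-to-one.

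Honestly, there is no real obstacle here — the statement is a direct corollary and the only content is to unpack the two normal forms and check the hypothesis on $(h' \pm c')/2$. The mild subtlety worth flagging is the equivalence ``$c/2 \in R$ iff $h/2 \in R$'' used in part (2), which one should justify from the existence of $u = (h-c)/2, v = (h+c)/2 \in R$ promised by \Cref{cond:ab-cond1}: since $h = u+v \in R$ and $c = v - u \in R$, halving one is equivalent to halving the other modulo $R$.
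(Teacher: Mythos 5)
Your proposal is correct and follows exactly the route the paper intends: the corollary is stated with no separate proof precisely because it is the specialization of \Cref{prop:ht-relation} to $(h',t') = (c,0)$ and $(h',t') = (0,(c/2)^2)$ with $\theta = 1$, after checking \Cref{cond:ab-cond1} for each choice. Your verification of the hypothesis $(h'\pm c')/2 \in R$ and of the equivalence $c/2 \in R \iff h/2 \in R$ (via $u = (h-c)/2 \in R$) is exactly the content needed, so nothing is missing.
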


\begin{proposition} \label{prop:rho-f-commutes}
    Let $(h, t),\ (h', t')$ be pairs satisfying \Cref{cond:ab-cond1} with $c = \sqrt{h^2 + 4t} = \sqrt{h'^2 + 4t'}$. For any two diagrams $D, D'$ related by a single Reidemeister move, the following diagram commutes:
    \begin{figure}[H]
        \centering
        \begin{tikzcd}
            H_{h, t}(D) \arrow{r}{\rho} \arrow{d}{f} & 
            H_{h, t}(D') \arrow{d}{f'} \\
            H_{h', t'}(D) \arrow{r}{\rho'} & 
            H_{h', t'}(D')
        \end{tikzcd}
    \end{figure}
    \noindent
    where $\rho, \rho'$ are the corresponding isomorphisms of \Cref{prop:cacb-variance-under-rho}, and $f, f'$ are the isomorphisms of \Cref{prop:ht-relation}.
\end{proposition}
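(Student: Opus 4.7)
The plan is to lift the square to the chain level. The isomorphism $\rho$ of \Cref{thm:t-inv} comes from an explicit chain homotopy equivalence $\tilde{\rho}: C_{h,t}(D;R) \to C_{h,t}(D';R)$ built from Khovanov's standard local Reidemeister formulas, which use only the Frobenius-algebra operations $m, \iota, \Delta, \epsilon$ of $A_{h,t}$; $\tilde{\rho}'$ is the analogous chain map constructed over $A_{h',t'}$. By the proof of \Cref{prop:ht-relation}, $f$ is induced by a chain isomorphism $\tilde{f}$ factoring as the Frobenius-algebra isomorphism $A_{h,t} \to B$ of \Cref{lem:frob-alg-isom} applied tensor-factor-wise, followed by the twist chain isomorphism of \Cref{lem:twist}; $\tilde{f}'$ is constructed in the same way on $D'$. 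It therefore suffices to establish the chain-level relation $\tilde{\rho}' \circ \tilde{f} \simeq \tilde{f}' \circ \tilde{\rho}$, which then descends to the claimed identity on homology.

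Since $\tilde{f}$ and $\tilde{f}'$ act state-by-state as tensor products of local maps on each $s$-circle, and agree on the portion of the diagram outside the Reidemeister disk, verifying this relation reduces to checking commutation with each local piece of Khovanov's Reidemeister homotopies inside the disk. For the Frobenius-algebra-isomorphism factor of $\tilde{f}$, commutation is automatic from the naturality of any map built from $m, \iota, \Delta, \epsilon$ with respect to Frobenius-algebra homomorphisms. For the twist factor, one uses that twisting by $\theta$ rescales $\Delta$ by $\theta^{-1}$ and $\epsilon$ by $\theta$ while preserving $m$ and $\iota$, so that the state-dependent scalar appearing in the twist chain isomorphism of \Cref{lem:twist} cancels exactly against the $\theta$-factors produced by the occurrences of $\Delta$ and $\epsilon$ inside the Reidemeister formulas.

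The main obstacle is the combinatorial bookkeeping for the twist: the normalization in \Cref{lem:twist} multiplies each state module by a power of $\theta$ that depends on the state (chosen so that merges contribute nothing and splits contribute a factor of $\theta$), and one must verify that these exponents match on both sides of the square for each of the three Reidemeister moves, using Khovanov's explicit local formulas. Once this matching is confirmed move-by-move, the chain-homotopy identity $\tilde{\rho}' \circ \tilde{f} \simeq \tilde{f}' \circ \tilde{\rho}$ follows, and passage to homology yields the claimed commutativity.
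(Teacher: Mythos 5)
Your overall strategy — lift everything to the chain level, use the explicit Reidemeister chain maps, and check the square move-by-move — is essentially what the paper does (its proof is exactly a "direct calculation using the explicit $\rho$" from \Cref{sec:canon-classes-and-RMs}). For RM2 and RM3 your naturality shortcut is sound: there the maps $\rho$ are assembled from the inclusion of a cube face, the map $\iota\colon x \mapsto x \otimes 1$, and cube differentials, all of which are built from $m, \iota, \Delta$ and hence commute on the nose with the factor-wise Frobenius isomorphism $f$ of \Cref{lem:frob-alg-isom}. Also note that under the hypothesis of \Cref{prop:rho-f-commutes} one has $c = c'$, so the relevant $\theta$ is $1$ and the twist of \Cref{lem:twist} is the identity; the $\theta$-exponent bookkeeping you single out as the main obstacle is vacuous here.

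The genuine gap is at RM1$_L$: its chain map is \emph{not} expressible through $m, \iota, \Delta, \epsilon$ alone, so "automatic naturality" does not apply to it. Explicitly, the map of \Cref{sec:canon-classes-and-RMs} is $\rho(x) = x \otimes X - (xX) \otimes 1$ (equivalently $1 \mapsto 1 \otimes X - X \otimes 1$, $X \mapsto X \otimes X - hX \otimes 1 - t\,1\otimes 1$), which uses the distinguished element $X$ (a dotted cobordism), and the isomorphism $f$ does not fix $X$: it sends $X \mapsto X + (h - h')/2$. Hence commutativity for RM1 requires a direct check rather than a formal naturality argument. The check does succeed, precisely because $f(X) - X$ is a scalar multiple of $1$, so the discrepancies of the two $X$-dependent terms cancel: $f'\rho(x) - \rho' f(x) = f(x) \otimes (f(X)-X) - f(x)(f(X)-X) \otimes 1 = 0$. (RM1$_R$ then follows, being a composite of RM2 and RM1$_L^{-1}$.) So your proof becomes complete once you replace the blanket naturality claim for RM1 by this one-line computation — which is exactly the direct calculation the paper invokes; be aware also that if a nontrivial twist $\theta \neq 1$ were actually present, $f(X) - X$ would no longer be scalar and this cancellation would need to interact with the twist factors, so the RM1 case, not the twist bookkeeping, is where the real content lies.
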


\begin{proof}
    By direct calculation using the isomorphism $\rho$ given explicitly in \Cref{sec:canon-classes-and-RMs}.
\end{proof}

Given any $c \in R$, we define 
\[
    C_c(D; R) = \slfrac{ \bigoplus_{h, t} C_{h, t}(D; R) }{\sim}
\]
where $(h, t)$ runs over pairs satisfying $c = \sqrt{h^2 + 4t}$, and the equivalence relation $\sim$ is given by the isomorphism $f$ of \Cref{prop:ht-relation}. Denote the corresponding homology group by $H_c(D; R)$. \Cref{fig:ht-param-sp} depicts the $(h, t)$-parameter space, where each point $(h, t)$ corresponds to $H_{h, t}$ and the parabola $h^2 + 4t = c^2$ corresponds to $H_c$.

\begin{figure}[t]
    \centering
    \includegraphics[scale=0.35]{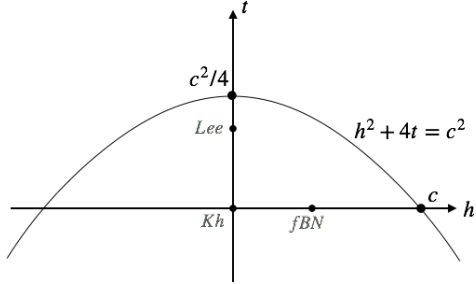}
    \caption{The $(h, t)$-parameter space.}
    \label{fig:ht-param-sp}
\end{figure}

The $\ca$-classes can be regarded as elements of $H_c(D; R)$. From \Cref{prop:rho-f-commutes} there is a well-defined isomorphism
\[
    \rho: H_c(D; R) \rightarrow H_c(D'; R).
\]
and the $\ca$-classes correspond as in \Cref{prop:cacb-variance-under-rho} under the Reidemeister moves. 
    \section{$k_c(D)$ and the invariant $\s_c(L)$} \label{sec:sc-def}

Throughout this section, we assume that $R$ is an integral domain.

\subsection{Definitions and basic properties}

\begin{definition}
	Let $M$ be an $R$-module, and $c$ be an element in $R$. Define the \textit{$c$-divisibility} of an element $z$ in $M$ by:
    \[
        k_c(z) = \max \{\ k \geq 0 \mid z \in c^k M \ \} \ \in \ [0, \infty].
    \]
\end{definition}

$\{c^k M\}_{k \geq 0}$ gives a filtration on $M$, and $k_c(z)$ gives the highest filter level that contains $z$. Note that $k_c(z) = \infty$ if $c$ is invertible or $z = 0$. 

\begin{lemma} \label{lem:c-add}
    If $n \geq 0$ then for any $z \in M$,
    \[
        k_c(c^n z) \geq k_c(z) + n.
    \]
    Moreover if $M$ is torsion-free, then the equality holds.
\end{lemma}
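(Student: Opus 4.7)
The plan is to split the two halves: the inequality is a direct unpacking of definitions, while the equality comes from cancelling $c^n$ using torsion-freeness of $M$.

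For the inequality I would first dispose of $k_c(z) = \infty$ --- which happens exactly when $c$ is a unit or $z = 0$, in either case making $k_c(c^n z) = \infty$ as well, so both sides agree. Otherwise set $k = k_c(z) < \infty$ and choose a witness $y \in M$ with $z = c^k y$; multiplying by $c^n$ gives $c^n z = c^{n+k} y \in c^{n+k} M$, so $k_c(c^n z) \geq k + n$ follows immediately from the definition of $k_c$ as the largest such exponent.

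For the equality under the torsion-free hypothesis, set $m := k_c(c^n z)$. The inequality just proved gives $m \geq n + k_c(z) \geq n$, so one may write $c^n z = c^m w = c^n (c^{m-n} w)$ for some $w \in M$, and rearrange to $c^n(z - c^{m-n} w) = 0$. Provided $c \neq 0$ --- which must be assumed here, since otherwise $c^n z = 0$ has infinite $c$-divisibility while $k_c(z) + n$ may be finite --- the element $c^n \in R$ is nonzero in the integral domain $R$, and torsion-freeness of $M$ forces $z = c^{m-n} w \in c^{m-n} M$. Hence $k_c(z) \geq m - n$, yielding the reverse bound $m \leq k_c(z) + n$ and completing the equality.

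The whole argument is routine; the only thing to be careful about is the bookkeeping of the $\infty$ conventions and of the degenerate case $c = 0$ (where equality can fail when $z \neq 0$ and $n \geq 1$), which I see as the only potential snag. Everything else reduces to a one-line manipulation of $c$-powers.
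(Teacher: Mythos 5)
Your argument is correct and follows essentially the same route as the paper: the inequality by multiplying a witness of $z \in c^k M$ by $c^n$, and the equality by cancelling $c^n$ using torsion-freeness (the paper, with $R$ an integral domain and $c$ non-zero, makes the same implicit use of $c^n \neq 0$, so your remark about the degenerate case $c=0$ is consistent with the paper's standing assumptions). The only loose points are the claim that $k_c(z)=\infty$ happens \emph{exactly} when $c$ is a unit or $z=0$ (it can also occur for $z \in \bigcap_k c^k M$) and the unaddressed possibility $k_c(c^n z)=\infty$ in the equality half; both are harmless, since the same cancellation applied to an arbitrary finite exponent $k$ covers them, matching the paper's one-line treatment of that case.
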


\begin{proof}
	$z \in c^k$ implies $c^n z \in c^{k + n}M$, so we have the inequality. Suppose $M$ is torsion free. If $k_c(c^n z)$ is infinite then so is $k_c(z)$. Suppose $k' = k_c(c^n z)$ is finite. From the maximality of $k'$ we have $n \leq k'$, and $c^n z = c^{k'} w$ for some $w \in M$. $c^n z - c^{k'} w = c^n(z - c^{k' - n}w) = 0$ implies $z = c^{k' - n}w \in c^{k' - n}M$ so $k_c(z) \geq k_c(c^n z) - n$.
\end{proof}

\begin{remark} \label{rem:k-definition}
	The equality does not hold if $M$ is not torsion free. Consider the case $R = \ZZ,\ M = \ZZ \oplus \ZZ_2,\ c = 2$ and $z = (2, 1)$. In this case $k_2(z) = 0$, but $2z = (4, 0)$ so $k_2(2z) = 2$.
\end{remark}

\begin{lemma} \label{lem:c-div}
	Let $\phi: M \rightarrow M'$ be an $R$-module homomorphism. Then for any $z \in M$,
	\[
	    k_c(z) \leq k_c(\phi(z)).
	\]
	Moreover if $\phi$ is an isomorphism, then the equality holds. 
\end{lemma}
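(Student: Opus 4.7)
The plan is to unwind the definition of $k_c$ and push divisibility witnesses through $\phi$. Concretely, for any nonnegative integer $k$ with $z \in c^k M$, choose $w \in M$ such that $z = c^k w$. Applying $\phi$ gives $\phi(z) = c^k \phi(w) \in c^k M'$, so $k$ is one of the exponents appearing in the set defining $k_c(\phi(z))$. Taking the supremum over all such $k$ immediately yields $k_c(z) \leq k_c(\phi(z))$. The edge case $k_c(z) = \infty$ is handled by the same computation applied to every $k$, which forces $k_c(\phi(z)) = \infty$ as well; the case $z = 0$ is trivial since $\phi(0) = 0$.

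For the equality statement, I would not redo any computation. If $\phi$ is an isomorphism, then applying the already-established inequality to $\phi^{-1}: M' \to M$ and the element $\phi(z) \in M'$ gives
\[
    k_c(\phi(z)) \leq k_c\bigl(\phi^{-1}(\phi(z))\bigr) = k_c(z),
\]
which combined with the forward inequality gives equality.

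There is essentially no obstacle here; the only subtlety worth flagging is the handling of the infinite value of $k_c$, which is why I would phrase the argument in terms of membership $z \in c^k M$ for each finite $k$ rather than directly manipulating the maximum. Note in particular that, unlike \Cref{lem:c-add}, no torsion-freeness hypothesis is needed, because we are only using one direction of the divisibility relation.
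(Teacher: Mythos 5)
Your argument is correct and is essentially the paper's own proof: the inequality is obtained by pushing a divisibility witness through $\phi$ (the paper writes this as $\phi(c^k M) = c^k \phi(M) \subset c^k M'$), and the equality for isomorphisms follows by applying the inequality to $\phi^{-1}$. Your extra care with the infinite case and the observation that no torsion-freeness is needed are fine but add nothing beyond what the paper's one-line proof already covers.
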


\begin{proof}
    $z \in c^k M$ implies $\phi(z) \in \phi(c^k M) = c^k \phi(M) \subset c^k M'$.
\end{proof}

Now we return to link homology. Denote by $H_c(D; R)_f$ the \textit{free part} of $H_c(D; R)$, \text{i.e.} the quotient of $H_c(D; R)$ by its torsion submodule. By abuse of notation, we denote the image of an element $[z] \in H_c(D; R)$ by the same symbol $[z] \in H_c(D; R)_f$. 

\begin{definition}
    Let $D$ be a link diagram, and $o$ be any alternative orientation on $D$. Define
    \[
        k_c(D, o) = k_c([\ca(D, o)])
    \]
    where $[\ca(D, o)] \in H_c(D; R)_f$. We omit $o$ when it is the given orientation of $D$.
\end{definition}

Divisibility is uninteresting when it is identically $\infty$, so in the following we assume that $c$ is non-zero, non-invertible. Note that $[\ca(D, o)] \neq 0$ from \Cref{prop:ab-gen}, and when $R$ is a PID it follows that $k_c(D, o)$ is finite. In the following we only consider the given orientation, since same arguments hold for any alternative orientation by regarding it as the given one.

\begin{example}
    $k_c(\bigcirc) = 0$ since $H(\bigcirc) = C(\bigcirc) = R\<1, X\>$.
\end{example}

\begin{example}
    $D = $ (unknot with one negative crossing). Let $(R, h, t) = (\ZZ, 0, 1)$ and $c = 2$. We have
    \[
        C(D) = \{\ A \overset{\Delta}{\longrightarrow} A^{\ox 2} \ \},
        \quad
        \ca(D) = (X - 1) \otimes (X + 1).
    \]
    From $\Delta(X) = X \otimes X + 1 \otimes 1$ and $\Delta(1) = 1 \otimes X + X \otimes 1$, we see that $\ca(D)$ is homologous to $2(1 \otimes X - 1 \otimes 1)$. Since $\{ [1 \otimes X], [1 \otimes 1] \}$ form a basis of $H(D)_f$, we have $k_2(D) = 1$.
\end{example}

These examples show that $k_c(D)$ is not a link invariant. The difference of $k_c$ between two diagrams is easy to compute; it can be given without even computing the homology.

\begin{proposition} \label{prop:RM-k-relation}
    Let $D, D'$ be two diagrams of the same link. Then 
    \[
        \Delta k_c = \frac{\Delta r - \Delta w}{2},
    \]
    where $w$ denotes the writhe, $r$ denotes the number of Seifert circles, and the prefixed $\Delta$ denotes the difference of the corresponding values of $D, D'$. 
\end{proposition}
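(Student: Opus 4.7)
The plan is to reduce the statement to the case of a single Reidemeister move and then read off $\Delta k_c$ directly from \Cref{prop:cacb-variance-under-rho} and \Cref{cor:rw-2j}, using the divisibility lemmas established earlier in this section.

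First I would invoke Reidemeister's theorem: any two diagrams $D, D'$ of the same link are connected by a finite sequence $D = D_0, D_1, \ldots, D_n = D'$ with consecutive pairs related by a single Reidemeister move. Since $k_c(-)$, $w(-)$ and $r(-)$ are all quantities depending only on the individual diagrams, their differences telescope along the sequence. Hence it suffices to prove $\Delta k_c = (\Delta r - \Delta w)/2$ when $D$ and $D'$ differ by a single Reidemeister move.

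Under this assumption, \Cref{prop:cacb-variance-under-rho} provides an isomorphism $\rho: H_c(D; R) \to H_c(D'; R)$, a sign $\epsilon \in \{\pm 1\}$, and an exponent $j \in \{0, \pm 1\}$ such that $[\ca(D')] = \epsilon c^j \rho[\ca(D)]$, while \Cref{cor:rw-2j} identifies $j = (\Delta r - \Delta w)/2$. Both modules $H_c(D; R)$ and $H_c(D'; R)$ contain only $c$-torsion by \Cref{cor:H(D)-tor}, and $\rho$ is an $R$-module isomorphism, so it descends to an isomorphism on the free quotients $H_c(-; R)_f$. The relation $[\ca(D')] = \epsilon c^j \rho[\ca(D)]$ persists on the free parts, with the convention of \Cref{prop:cacb-variance-under-rho} that when $j = -1$ this reads $c[\ca(D')] = \epsilon \rho[\ca(D)]$.

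Next I would apply the two divisibility lemmas. By \Cref{lem:c-div}, $k_c(\rho[\ca(D)]) = k_c([\ca(D)]) = k_c(D)$, and since $\epsilon$ is a unit, scalar multiplication by $\epsilon$ preserves $k_c$. For $j \in \{0, 1\}$, \Cref{lem:c-add} applied in the torsion-free module $H_c(D'; R)_f$ yields the equality $k_c([\ca(D')]) = k_c(\rho[\ca(D)]) + j = k_c(D) + j$, hence $\Delta k_c = j$. For $j = -1$, the rewritten relation $c[\ca(D')] = \epsilon \rho[\ca(D)]$ together with the same torsion-free equality of \Cref{lem:c-add} gives $k_c(D) = k_c([\ca(D')]) + 1$, so again $\Delta k_c = -1 = j$. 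Combining with \Cref{cor:rw-2j} closes the argument.

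The one piece of care required is to work in the torsion-free quotient $H_c(-; R)_f$ throughout, so that the inequality in \Cref{lem:c-add} becomes an equality; \Cref{rem:k-definition} shows that on torsion parts divisibility can jump in unwanted ways, and omitting this step would only yield $\Delta k_c \geq j$ for $j \geq 0$ rather than equality. Everything else is bookkeeping over the two results cited from \Cref{sec:gen-kh-theory}.
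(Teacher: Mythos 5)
Your proposal is correct and follows essentially the same route as the paper: invoke \Cref{prop:cacb-variance-under-rho} and \Cref{cor:rw-2j} along a sequence of Reidemeister moves, then conclude with \Cref{lem:c-add} and \Cref{lem:c-div} applied in the free quotient. The only difference is organizational (you telescope move by move, the paper composes the isomorphisms and sums the exponents into a single $J$), and your explicit handling of the $j=-1$ convention and of why equality holds in \Cref{lem:c-add} makes precise what the paper leaves implicit.
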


\begin{proof}
    Take any sequence of Reidemeister moves that transforms $D$ to $D'$. Let $\rho$ be the composition of the isomorphisms corresponding to the Reidemeister moves given in \Cref{prop:cacb-variance-under-rho}. Let $J$ be the sum of the $c$-exponents occurring at each move. Then $[\ca(D')] = \pm c^J \rho([\ca(D)])$. From \Cref{cor:rw-2j} we have $J = (\Delta r - \Delta w)/2$. The result follows from \Cref{lem:c-add} and \ref{lem:c-div}.
\end{proof}

Thus we obtain the following:

\begin{maintheorem}
    Let $L$ be a link. With any diagram $D$ of $L$, 
    \[
        \s_c(L) = 2k_c(D) + w(D) - r(D) + 1,
    \]
    gives an invariant of $L$.
\end{maintheorem}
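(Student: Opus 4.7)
The plan is to reduce the statement directly to \Cref{prop:RM-k-relation}, which has already been proved and carries the real content. By Reidemeister's theorem, any two diagrams $D, D'$ of the same link $L$ are connected by a finite sequence of Reidemeister moves together with planar isotopies, and planar isotopies affect neither $k_c$, $w$, nor $r$. It therefore suffices to verify that the quantity $2k_c(D) + w(D) - r(D) + 1$ is unchanged across any such sequence. But \Cref{prop:RM-k-relation} gives exactly
\[
    2\,\Delta k_c \;=\; \Delta r - \Delta w,
\]
or equivalently $\Delta\bigl(2k_c + w - r\bigr) = 0$, and the additive constant $+1$ plays no role, so the invariance drops out in one line.

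The substantive work has already been bundled into \Cref{prop:RM-k-relation}, which in turn rests on two pillars: the variance formula for $\ca$-classes under each Reidemeister move (\Cref{prop:cacb-variance-under-rho}, compactly $j = (\Delta r - \Delta w)/2$ via \Cref{cor:rw-2j}); and the two divisibility lemmas \Cref{lem:c-add} and \Cref{lem:c-div}, which combine to show that if $[\ca(D')] = \pm c^J \rho[\ca(D)]$ for some module isomorphism $\rho$, then $k_c(D') = k_c(D) + J$. Finiteness of $k_c(D)$, used implicitly to avoid the trivial case $k_c = \infty$, follows from \Cref{prop:ab-gen} applied over the localization $R[c^{-1}]$ together with \Cref{cor:H(D)-tor}, which force $[\ca(D)] \neq 0$ in the free quotient $H_c(D; R)_f$.

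The only subtlety worth flagging, hidden behind the clean appeal to \Cref{prop:RM-k-relation}, is the need to evaluate $k_c$ on $H_c(D; R)_f$ rather than on $H_c(D; R)$ itself: the equality half of \Cref{lem:c-add} fails in the presence of $c$-torsion, as \Cref{rem:k-definition} demonstrates. Without passing to the free quotient one cannot reliably read off $\Delta k_c$ from the exponent $J$, and the argument would collapse. Once this reduction is in place the proof is a one-line rearrangement of already-established identities, so I do not expect any further obstacle to appear at this stage of the paper.
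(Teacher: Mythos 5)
Your proposal is correct and follows the paper's own route: the theorem is deduced immediately from \Cref{prop:RM-k-relation} (whose proof composes the isomorphisms of \Cref{prop:cacb-variance-under-rho}, uses \Cref{cor:rw-2j} for the exponent, and concludes via \Cref{lem:c-add} and \Cref{lem:c-div} on the torsion-free quotient), so that $2k_c + w - r$ is unchanged under Reidemeister moves. Your flag about evaluating $k_c$ in $H_c(D;R)_f$ rather than $H_c(D;R)$ is exactly the point the paper handles the same way.
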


First we state some basic properties of $k_c$. Obviously $k_c$ is bounded below, while it is unbounded above among diagrams of the same link, since $k_c$ increases by 1 as we add one negative twist. From the following proposition, $k_c$ may be regarded as measuring the ``non-positivity" of the diagram.

\begin{proposition} \label{prop:k_posD}
	If $D$ is positive (i.e.\ a diagram with only positive crossings), then $k_c(D) = 0$.
\end{proposition}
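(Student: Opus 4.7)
The plan is to exploit the fact that for a positive diagram the $\ca$-cycle sits at the very bottom of the chain complex, so that homology embeds into chains, and then to certify nondivisibility at the chain level via a reduction modulo $c$.

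First I would pin down the homological degree. Since $D$ is positive we have $n^-(D) = 0$, and the orientation-preserving state $s_o$ is obtained by $0$-resolving every crossing, so $|s_o| = 0$. Hence $\ca(D) \in C_c^0(D; R) = V(D, s_o) = A^{\ox r}$, while $C_c^{-1}(D; R) = 0$. Consequently $H_c^0(D; R) = \ker d^0$ embeds into the free $R$-module $A^{\ox r}$, so it is torsion-free and $H_c^0(D; R)_f = H_c^0(D; R)$. In particular $k_c([\ca(D)])$ can be measured directly at the chain level: since there are no boundaries below, the cycle $\ca(D)$ is its unique representative in $C_c^0(D;R)$.

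Next I would show $\ca(D) \notin c \cdot C_c^0(D; R)$ by reducing modulo $c$. Writing $\ca(D) = e_1 \ox \cdots \ox e_r$ with each $e_i \in \{\a, \b\}$, the relation $\a - \b = v - u = c$ forces both colors to collapse to the single element $X - \bar u \in A/cA \isom (R/cR)[X]/(X - \bar u)^2$. The latter is a free $R/cR$-module of rank $2$ (with basis $\{1, X - \bar u\}$), and $X - \bar u$ is a nonzero generator. Hence the reduction of $\ca(D)$ in $C_c^0(D; R)/c \cdot C_c^0(D; R) \isom (A/cA)^{\ox r}$ is the pure tensor $(X - \bar u)^{\ox r}$, which is nonzero in the free $R/cR$-module of rank $2^r$. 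Therefore $\ca(D) \notin c \cdot C_c^0(D; R)$.

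Combining these two observations gives $k_c([\ca(D)]) = 0$, since a relation $[\ca(D)] = c [z]$ in $H_c^0(D; R)_f = H_c^0(D; R) \subseteq C_c^0(D; R)$ would force $\ca(D) = c z$ in $C_c^0(D; R)$, contradicting the previous paragraph. The only point requiring real care is the homological-degree bookkeeping that promotes the chain-level statement to a homology-level one; for non-positive diagrams the embedding $H_c^0 \hookrightarrow C_c^0$ fails, and a chain-level witness of nondivisibility no longer suffices — consistent with the fact that $k_c$ strictly increases when negative twists are added.
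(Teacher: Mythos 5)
Your proof is correct, but it takes a genuinely different route from the paper's. The paper argues by functoriality of divisibility: it $0$-resolves the crossings one at a time, obtaining a composition of chain maps $C(D) \rightarrow C(D_0) \rightarrow \cdots \rightarrow C(D_{0\cdots 0})$ onto the crossingless diagram, sends $\ca(D)$ to $\ca(D_{0\cdots 0})$, and invokes \Cref{lem:c-div} ($k_c(z) \leq k_c(\phi(z))$) together with the observation that $\ca(D_{0\cdots 0})$ has the term $X \ox \cdots \ox X$ with coefficient $1$, hence is not $c$-divisible in $H(D_{0\cdots 0}) = C(D_{0\cdots 0})$. You instead exploit the homological position of the $\ca$-cycle for a positive diagram: $n^- = 0$ and $|s_o| = 0$ put $\ca(D)$ in $C^0$ with $C^{-1} = 0$, so $H^0 = \ker d^0$ embeds in the free module $A^{\ox r}$ (making the passage to the free part and from homology-level to chain-level divisibility automatic), and then you certify chain-level nondivisibility by reduction mod $c$, where $\a$ and $\b$ both collapse to the basis element $X - \bar{u}$ of $A/cA$ and the pure tensor $(X-\bar{u})^{\ox r}$ is a basis element of $(A/cA)^{\ox r}$, nonzero because $c$ is not a unit. (Equivalently, one could just note that the coefficient of $X^{\ox r}$ in $\ca(D)$ is $1$.) The trade-off: your argument is more self-contained and makes transparent exactly where positivity enters (the degree-$0$ position, as your closing remark notes), while the paper's map-to-the-crossingless-diagram argument is the more flexible template — the same monotonicity mechanism reappears later for cobordism maps and for the quasi-positive case, where the cycle no longer sits at the bottom of the complex.
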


\begin{proof}
	The orientation preserving state of $D$ is $s = (0 \cdots 0)$. By 0-resolving the crossings one by one, we obtain a sequence of chain maps:
    \begin{align*}
    	C(D) \rightarrow C(D_0) \rightarrow \cdots \rightarrow C(D_{0 \cdots 0})
    \end{align*}
    The rightmost diagram has no crossing, so $H(D_{0 \cdots 0}) = C(D_{0 \cdots 0})$. Its $\ca$-cycle is non $c$-divisible, since it has a term $X \otimes \cdots \otimes X$ with coefficient $1$. Under the composition of the chain maps, $\ca(D)$ is mapped to $\ca(D_{0 \cdots 0})$, so from \Cref{lem:c-div} we have $k_c(D) \leq 0$.
\end{proof}

\begin{proposition} \label{prop:k-reverse}
	$$ k_c(D) = k_c(-D). $$
\end{proposition}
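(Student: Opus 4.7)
The plan is to reduce the proposition to an equality of $c$-divisibilities for two cycles in the same chain complex, and then to construct an automorphism that interchanges them up to a sign. First, \Cref{prop:C_ht-basic}(1) gives $C_{h,t}(-D;R) = C_{h,t}(D;R)$, and applying \Cref{algo:ab-coloring} to the reversed-orientation diagram swaps ``left'' and ``right'' on every Seifert circle, so every color is flipped; thus $\ca(-D) = \cb(D)$ inside the common chain complex, and it suffices to show
\[
    k_c([\ca(D)]) = k_c([\cb(D)]) \quad \text{in } H_{h,t}(D;R)_f.
\]

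For the main step I would consider the $R$-algebra involution $\sigma \colon A_{h,t} \to A_{h,t}$ given by $X \mapsto h - X$, which is well-defined since it preserves $X^2 - hX - t$. A direct calculation yields $\sigma(\a) = -\b$, $\sigma(\b) = -\a$, $\epsilon \circ \sigma = -\epsilon$, and $(\sigma \otimes \sigma) \circ \Delta = -\Delta \circ \sigma$. Hence $\sigma$ is not itself a Frobenius algebra automorphism, but it is a Frobenius algebra isomorphism from $A_{h,t}$ to its twist by $\theta = -1$. Composing the map induced by $\sigma$ on each circle with the twist equivalence provided by \Cref{lem:twist} then yields an automorphism $\Psi$ of $C_{h,t}(D;R)$. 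Combining the factor $(-1)^r$ produced by the color-swap on the $r$ Seifert circles with the sign contribution of the twist equivalence, one obtains $\Psi(\ca(D)) = \pm \cb(D)$.

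Passing to homology, $\Psi_*$ is an automorphism of $H_{h,t}(D;R)_f$, so \Cref{lem:c-div} gives $k_c([\ca(D)]) = k_c(\Psi_*[\ca(D)]) = k_c(\pm [\cb(D)]) = k_c([\cb(D)])$, as desired. The delicate point will be keeping careful track of the sign produced by the twist equivalence in \Cref{lem:twist}; fortunately, since $\theta = -1$, this contribution is a single $\pm 1$ on each state $V(D,s)$, and such an overall sign has no effect on $c$-divisibility.
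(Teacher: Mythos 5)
Your proof is correct and follows essentially the same route as the paper: the paper's one-line argument also uses the involution $X \mapsto -X + h$ of $A_{h,t}$, which sends $\a, \b$ to $-\b, -\a$, so the induced chain automorphism carries $\ca(D)$ to $\pm\cb(D) = \pm\ca(-D)$, and divisibility is preserved by \Cref{lem:c-div}. Your extra care in noting that this map is only a Frobenius isomorphism onto the $(-1)$-twist (so the honest chain automorphism requires the twist equivalence of \Cref{lem:twist}) is a legitimate refinement of a detail the paper glosses over, not a different approach.
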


\begin{proof}
    Consider a Frobenius algebra automorphism on $A_{h, t}$ given by $X \mapsto -X + h$. This maps $\a, \b$ to $-\b, -\a$ respectively. The induced chain automorphism on $C_{h, t}(D)$ maps $\ca(D)$ to $\pm\cb(D) = \pm\ca(-D)$.
\end{proof}



\begin{proposition} \label{prop:k-disj-union} 
    For any two link diagrams $D, D'$,
    \[
        k_c(D \sqcup D') \geq k_c(D) + k_c(D').
    \]
    Moreover, if $R$ is a PID and $c$ is prime in $R$, then the equality holds.
\end{proposition}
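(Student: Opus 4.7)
The plan is to exploit the tensor-product decomposition $C_c(D \sqcup D'; R) \cong C_c(D; R) \otimes_R C_c(D'; R)$ provided by \Cref{prop:C_ht-basic}(2), under which the $\ca$-cycle of the disjoint union splits as $\ca(D \sqcup D') = \ca(D) \otimes \ca(D')$. This identification of $\ca$-cycles is geometric: placing $D$ and $D'$ in disjoint disks of the plane, the Seifert circles of $D \sqcup D'$ are the disjoint union of those of $D$ and $D'$, and neither family affects the checkerboard coloring of \Cref{algo:ab-coloring} on the other, so each circle is colored identically whether viewed inside $D$ (resp.\ $D'$) alone or inside $D \sqcup D'$.

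For the inequality, set $k = k_c(D)$ and $k' = k_c(D')$. By the definition of $k_c$, there exists $\bar{y} \in H_c(D;R)_f$ with $[\ca(D)]_f = c^k \bar{y}$; choosing a lift $[y] \in H_c(D;R)$ of $\bar{y}$, the difference $[\tau] := [\ca(D)] - c^k[y]$ maps to zero in $H_c(D;R)_f$, hence is torsion, and by \Cref{cor:H(D)-tor} is annihilated by some power of $c$. Similarly write $[\ca(D')] = c^{k'}[y'] + [\tau']$. The chain-level isomorphism of \Cref{prop:C_ht-basic}(2) induces a natural map $\iota : H_c(D;R) \otimes_R H_c(D';R) \to H_c(D \sqcup D';R)$ sending $[\ca(D)] \otimes [\ca(D')]$ to $[\ca(D \sqcup D')]$, so
\[
    [\ca(D \sqcup D')] = c^{k+k'}\iota([y] \otimes [y']) + c^k\iota([y]\otimes[\tau']) + c^{k'}\iota([\tau]\otimes[y']) + \iota([\tau]\otimes[\tau']).
\]
The last three terms are each annihilated by a power of $c$, so they vanish in $H_c(D \sqcup D';R)_f$, and $k_c(D \sqcup D') \geq k + k'$ follows.

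Now assume $R$ is a PID and $c$ is prime. Since each $C_c^i(D;R)$ is a free $R$-module of finite rank, the K\"unneth theorem yields a short exact sequence
\[
    0 \to H_c(D;R) \otimes_R H_c(D';R) \xrightarrow{\iota} H_c(D \sqcup D';R) \to \Tor_1^R(H_c(D;R), H_c(D';R)) \to 0.
\]
By \Cref{cor:H(D)-tor}, the torsion submodules of $H_c(D;R)$ and $H_c(D';R)$ are both $c$-torsion, so the $\Tor$ term and the non-free part of the left-hand tensor product are both $c$-torsion. Passing to free parts therefore yields a canonical isomorphism $H_c(D \sqcup D';R)_f \cong H_c(D;R)_f \otimes_R H_c(D';R)_f$ under which $[\ca(D \sqcup D')]_f$ corresponds to $[\ca(D)]_f \otimes [\ca(D')]_f$.

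The remaining calculational step is the claim that, for finitely generated free modules $M, M'$ over the PID $R$ and a prime $c \in R$, one has $k_c(u \otimes v) = k_c(u) + k_c(v)$ for all nonzero $u \in M$, $v \in M'$. This reduces to additivity of the $c$-adic valuation $v_c$ on $R$, which holds because $c$ is prime in the UFD $R$: fixing bases, $k_c(u) = \min_i v_c(u_i)$, the coordinates of $u \otimes v$ are the products $u_i v'_j$, and $v_c(u_i v'_j) = v_c(u_i) + v_c(v'_j)$, so the minimum over pairs separates as a sum of minima. The main obstacle I anticipate is the careful bookkeeping needed to pass through the K\"unneth splitting while tracking the $\ca$-classes. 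Applied to $u = [\ca(D)]_f$ and $v = [\ca(D')]_f$, this yields $k_c(D \sqcup D') = k_c(D) + k_c(D')$.
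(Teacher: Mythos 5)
Your proposal is correct and follows essentially the same route as the paper: decompose $C(D \sqcup D')$ as $C(D) \otimes C(D')$, note $\ca(D \sqcup D') = \ca(D) \otimes \ca(D')$, use the induced identification $H(D)_f \otimes H(D')_f \isom H(D \sqcup D')_f$ over a PID (the paper's map $h$, which you justify via K\"unneth), and for equality expand in a basis of the tensor product and use primality of $c$. Your valuation-additivity step is just a repackaging of the paper's contradiction argument that $c \mid a_i a'_j$ for all $i,j$ would force $c$ to divide all coefficients of $[\ca_0]$ or of $[\ca_0']$, and your explicit torsion bookkeeping in the inequality matches the paper's implicit passage to free parts.
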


\begin{proof}
    From \Cref{prop:C_ht-basic} we have $C(D \sqcup D') \isom C(D) \ox C(D')$. From a general argument of homological algebra, there is a homomorphism
	\[
        h: H(D)_f \otimes H(D')_f \longrightarrow H(D \sqcup D')_f,
    \]
    that maps $[z] \otimes [w]$ to $[z \otimes w]$, and it is an isomorphism if $R$ is a PID. Let $\ca = \ca(D),\ \ca' = \ca(D')$. The $\ca$-cycle of $D \sqcup D'$ is given by $\ca'' = \ca \otimes \ca'$. Let $[\ca] = c^k[\ca_0],\ [\ca'] = c^{k'}[\ca_0']$ with maximal $k, k'$. Then
    \[
        [\ca''] 
            = [\ca \ox \ca'] 
            = h([\ca] \ox [\ca']) 
            = c^{k + k'}h([\ca_0] \ox [\ca'_0]),
    \] 
    so the inequality holds.
    
    Now suppose $R$ is a PID and $c$ is prime in $R$. Let $[\ca''] = c^{k''}[\beta]$. We have $h([\ca_0] \otimes [\ca_0']) = c^l[\beta]$ where $l = k'' - (k + k')$. We prove that $l = 0$. Let $\{ [z_i] \}$, $\{ [z'_j] \} $ be the bases of $H(D)_f, H(D')_f$ respectively. Since $h$ is an isomorphism, $\{ [z_i \ox z'_j] \}$ is a basis of $H(D \sqcup D')_f$. Let $[\ca_0] = \sum_i a_i [z_i],\ [\ca'_0] = \sum_j a'_j [z'_j]$. Then
    \[
        h([\ca_0] \otimes [\ca_0']) 
            = \sum_{i,j} a_i a'_j [z_i \ox z'_j] 
            \ \in \ c^l H(D \sqcup D')_f.
    \]
    If $l > 0$, then $c \mid a_i a'_j$ for all $i, j$. Since $c$ is prime, one of $[\ca_0], [\ca_0']$ must be divisible by $c$. This contradicts the maximality of $k$ or $k'$.
\end{proof}



\begin{proposition}\label{prop:k-conn-sum-weak}
    For any two link diagrams $D, D'$,
    \[
        k_c(D \# D') \leq k_c(D \sqcup D') \leq k_c(D \# D') + 1.
    \]
\end{proposition}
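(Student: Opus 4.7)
The plan is to realize $D \sqcup D'$ and $D \# D'$ as related by a single saddle cobordism localized in a small neighborhood of the connect-sum region, and to compare their $\ca$-classes through the induced chain maps. Let $W_1 \colon D\sqcup D' \to D\# D'$ denote the ``merging'' saddle and $W_2 \colon D\# D' \to D\sqcup D'$ denote the ``splitting'' saddle, with induced chain maps
\[
    g \colon C_{h,t}(D\sqcup D'; R) \longrightarrow C_{h,t}(D\# D'; R), \qquad
    f \colon C_{h,t}(D\# D'; R) \longrightarrow C_{h,t}(D\sqcup D'; R).
\]
Since the cobordism is a local operation, $g$ acts on the tensor factors associated to the two Seifert circles $C, C' \subset D\sqcup D'$ being merged by the multiplication $m$ of $A_{h,t}$, while $f$ acts on the factor corresponding to the merged circle $C'' \subset D\# D'$ by the comultiplication $\Delta$.

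The central step is the computation on $\ca$-cycles. Granting that \Cref{algo:ab-coloring} assigns the same color (say $\a$) to each of $C$, $C'$, and $C''$, the formulas $\Delta(\a) = \a \ox \a$ and $m(\a \ox \a) = c\a$ force (possibly up to sign)
\[
    f(\ca(D\# D')) = \ca(D\sqcup D'), \qquad
    g(\ca(D\sqcup D')) = c\cdot\ca(D\# D').
\]
Passing to $H_c(-;R)_f$, applying \Cref{lem:c-div} to $f_*$ yields $k_c(D\# D') \le k_c(D\sqcup D')$. Applied to $g_*$, it yields $k_c(D\sqcup D') \le k_c(c\cdot[\ca(D\# D')])$, and since the free part is by definition torsion-free, \Cref{lem:c-add} converts the right-hand side into $k_c(D\# D') + 1$. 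Concatenating these gives the double inequality.

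The main obstacle is the color-matching claim for $C$, $C'$, and $C''$. This is a local checkerboard verification: the two arcs joined in the connect sum face a common white region (the ``outside'' region separating the two components in $D\sqcup D'$), which after the saddle becomes the white region adjacent to $C''$ in $D\# D'$, so the black/white labeling immediately inside and outside the merged circle agrees with that at $C$ and $C'$. Combined with the fact that the connect sum is performed coherently with respect to the orientation, the $\a/\b$ assignments propagate consistently across the saddle. Should any diagrammatic choice produce disagreeing colors, one can invoke \Cref{prop:k-reverse} to pass to $-D$ or $-D'$, swapping $\a \leftrightarrow \b$ on the affected component and reducing to the matched case, in which $m(\b \ox \b) = -c\b$ and $\Delta(\b) = \b \ox \b$ play the symmetric roles.
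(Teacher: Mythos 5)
Your proof is correct and takes essentially the same route as the paper: the two diagrams are related by a single oriented saddle in each direction, under which the $\ca$-cycles correspond via $\Delta$ and $m$ as $\ca(D\#D') \mapsto \ca(D\sqcup D') \mapsto \pm c\,\ca(D\#D')$, and the two inequalities then follow from \Cref{lem:c-add} and \Cref{lem:c-div} applied on the free parts. The only blemish is the final fallback via \Cref{prop:k-reverse}: reversing the orientation of just one factor is not covered by that proposition and changes the diagrams $D\sqcup D'$ and $D\#D'$ under consideration, but this fallback is never needed, since for an orientation-coherent band the two attaching circles always receive the same color (both $\a$ or both $\b$), which your own checkerboard analysis together with the symmetric computation $\Delta(\b)=\b\ox\b$, $m(\b\ox\b)=-c\b$ already handles.
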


\begin{proof}
    $D \# D'$ and $D \sqcup D'$ are related by fusion moves, and the $\ca$-cycles correspond as:
    \[
        \ca(D \# D') 
        \ \overset{\Delta}{\longmapsto} \  
        \ca(D \sqcup D')
        \ \overset{m}{\longmapsto} \  
        \pm c\ca(D \# D').
    \]
\end{proof}

Next we state some basic properties of $\s_c$. The following properties can be obtained immediately from the previous results.

\begin{proposition} \label{prop:s-properties}
    Let $L, L'$ be any two link diagrams.
    \begin{enumerate}
        \item 
            $\s_c(\bigcirc) = 0$.
            
        \item 
            $\s_c(L) = \s_c(-L)$.
            
        \item 
            $\s_c(L \sqcup L') \geq \s_c(L) + \s_c(L') - 1$. 
            
            
        \item 
            $\s_c(L \# L') = \s_c(L \sqcup L') \pm 1.$
            
    \end{enumerate}
    If $R$ is a PID and $c$ is prime in $R$, then we have
    \begin{itemize}
        \item [3'.]
            $\s_c(L \sqcup L') = \s_c(L) + \s_c(L') - 1$. 
            
        \item [4'.]
            $\s_c(L \# L') = \s_c(L) + \s_c(L') \text{\ \ or\ \ } \s_c(L) + \s_c(L') - 2.$

    \end{itemize}
    \qed
\end{proposition}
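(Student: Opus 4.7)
The proof is essentially bookkeeping: all four items will fall out of the definition $\s_c(L) = 2k_c(D) + w(D) - r(D) + 1$ once we record how $w$, $r$ and the already-established properties of $k_c$ behave under the relevant operations. I would organize the argument as follows.

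For item (1), I would just take $D = \bigcirc$ (the trivial diagram), for which $w(D) = 0$, $r(D) = 1$ and $k_c(D) = 0$, giving $\s_c = 0$ directly. For item (2), the crucial observation is that reversing the orientation on every component leaves the sign of every crossing unchanged (both strands are flipped), so $w(-D) = w(D)$ and the orientation-preserving state of $-D$ consists of the same underlying circles as that of $D$, whence $r(-D) = r(D)$. Combined with \Cref{prop:k-reverse}, which gives $k_c(-D) = k_c(D)$, the equality $\s_c(L) = \s_c(-L)$ follows from the definition.

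For item (3), I would pick diagrams $D, D'$ of $L, L'$ and form $D \sqcup D'$. Writhe and Seifert-circle count are manifestly additive on disjoint unions, so
\begin{align*}
    \s_c(L \sqcup L') &= 2 k_c(D \sqcup D') + \bigl(w(D) + w(D')\bigr) - \bigl(r(D) + r(D')\bigr) + 1.
\end{align*}
Plugging in the inequality $k_c(D \sqcup D') \geq k_c(D) + k_c(D')$ from \Cref{prop:k-disj-union} and regrouping as $\s_c(L) + \s_c(L') - 1$ yields (3), and the PID case (3') follows from the equality clause of \Cref{prop:k-disj-union}.

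For item (4), I would compare $D \# D'$ with $D \sqcup D'$ via a fusion arc at an outer strand on each side. This adds no crossing, so writhe is unchanged, whereas the two outer Seifert circles being joined merge into one, so $r(D \# D') = r(D \sqcup D') - 1$. Substituting into the definition gives
\[
    \s_c(L \# L') - \s_c(L \sqcup L') = 2\bigl(k_c(D \# D') - k_c(D \sqcup D')\bigr) + 1,
\]
and since \Cref{prop:k-conn-sum-weak} forces $k_c(D \# D') - k_c(D \sqcup D') \in \{-1, 0\}$, the right-hand side lies in $\{-1, +1\}$, proving (4). Finally, (4') is obtained by combining (4) with (3'). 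No serious obstacle arises; the only point requiring care is the connect-sum bookkeeping for $r$, which is what forces the $\pm 1$ (and ultimately the two-value indeterminacy in (4')).
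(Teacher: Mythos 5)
Your proof is correct and is exactly the argument the paper intends: the paper leaves this proposition as an immediate consequence of the definition of $\s_c$ together with \Cref{prop:k-reverse}, \Cref{prop:k-disj-union} and \Cref{prop:k-conn-sum-weak}, and your bookkeeping of $w$, $r$ and $k_c$ (including $r(D\#D') = r(D\sqcup D') - 1$ forcing the $\pm 1$) fills in precisely those omitted details.
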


\begin{lemma} \label{lem:chiS}
    Let $L$ be a link, $D$ be a diagram of $L$. Let $S$ be the Seifert surface of $L$ obtained by applying the Seifert's algorithm to $D$. Then
    \[
        \chi(S) = 2 - 2g(S) - |L| = r(D) - n(D),
    \]
    where $\chi$ is the Euler number, and $g$ is the genus.
    \qed
\end{lemma}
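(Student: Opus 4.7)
The plan is to establish the two equalities separately. The first is a topological fact about any compact oriented surface with boundary, while the second follows from the explicit combinatorial construction of $S$ via Seifert's algorithm.

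For the equality $\chi(S) = 2 - 2g(S) - |L|$, I would appeal to the classification of compact oriented surfaces. A connected compact oriented surface of genus $g$ with $b$ boundary circles has Euler characteristic $2 - 2g - b$, obtained by removing $b$ open disks from a closed surface of genus $g$ (which has $\chi = 2 - 2g$) and observing that removing an open disk decreases $\chi$ by one. By construction $\partial S = L$, so $S$ has exactly $|L|$ boundary circles, and $g(S)$ is its genus, whence the formula. (If $S$ is disconnected one sums over components; the conventions for $g(S)$ and the combinatorics still make the identity hold.)

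For the equality $\chi(S) = r(D) - n(D)$, I would read off a handle decomposition directly from Seifert's algorithm. The algorithm builds $S$ in two stages: first it caps off each of the $r(D)$ Seifert circles by a disk, producing $r(D)$ disjoint disks with total Euler characteristic $r(D)$; then at each of the $n(D)$ crossings it glues in a half-twisted band connecting two of these disks. A band is a $1$-handle attached along two disjoint arcs, and since a band and each attaching arc are contractible, inclusion–exclusion gives $\chi(\text{surface} \cup \text{band}) = \chi(\text{surface}) + 1 - 2 = \chi(\text{surface}) - 1$. Summing over all $n(D)$ bands yields $\chi(S) = r(D) - n(D)$.

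There is no real obstacle; the only point requiring mild care is the bookkeeping for the band attachments, namely confirming that each band is attached along two disjoint arcs (which follows from the fact that distinct crossings contribute distinct arcs on the Seifert disks) so that the inclusion–exclusion computation above applies at every step uniformly.
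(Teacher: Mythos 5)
Your proposal is correct and is exactly the standard argument that the paper leaves implicit (the lemma is stated with no proof, as a well-known fact): the surface-classification formula $\chi = 2 - 2g - b$ for the first equality, and the disk-and-band decomposition from Seifert's algorithm ($r(D)$ disks, $n(D)$ bands, each band lowering $\chi$ by one) for the second. The only point worth flagging is the one you already note parenthetically: the first equality as literally stated presupposes $S$ connected (or an appropriate genus convention for disconnected surfaces arising from split diagrams), which is a caveat of the paper's statement itself rather than of your proof.
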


\begin{proposition}
    $$ \s_c(L) \equiv |L| - 1 \bmod{2}. $$
    \qed
\end{proposition}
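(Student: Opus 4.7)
The plan is a direct parity computation from the definition of $\s_c$ together with \Cref{lem:chiS}. Pick any diagram $D$ of $L$. By definition,
\[
    \s_c(L) = 2k_c(D) + w(D) - r(D) + 1,
\]
so reducing modulo $2$ gives $\s_c(L) \equiv w(D) + r(D) + 1 \pmod{2}$. The task is therefore to show $w(D) + r(D) \equiv |L| \pmod{2}$.

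First I would relate $r(D)$ to the crossing number $n(D) = n^+(D) + n^-(D)$. By \Cref{lem:chiS}, $r(D) - n(D) = 2 - 2g(S) - |L|$, so reducing modulo $2$ yields $r(D) + n(D) \equiv |L| \pmod{2}$, i.e.\ $r(D) \equiv |L| + n(D) \pmod{2}$. Next I would observe that $w(D) = n^+(D) - n^-(D)$ and $n(D) = n^+(D) + n^-(D)$, so $w(D) - n(D) = -2n^-(D) \equiv 0 \pmod{2}$, i.e.\ $w(D) \equiv n(D) \pmod{2}$.

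Combining these two congruences gives
\[
    w(D) + r(D) \equiv n(D) + |L| + n(D) \equiv |L| \pmod{2},
\]
and hence $\s_c(L) \equiv |L| + 1 \equiv |L| - 1 \pmod{2}$, as desired.

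There is no real obstacle here; the statement follows by bookkeeping once \Cref{lem:chiS} and the elementary identity $w(D) \equiv n(D) \pmod 2$ are in hand. The only thing to double-check is that $2k_c(D)$ contributes nothing mod $2$, which is immediate (and in particular the argument is insensitive to whether $k_c(D)$ is finite, since we are only using the definition of $\s_c$, which requires $L$ to have a diagram for which $k_c(D) < \infty$).
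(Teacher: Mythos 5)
Your proof is correct and follows exactly the route the paper intends: the paper states this proposition without proof immediately after \Cref{lem:chiS}, precisely because the parity computation you give ($w(D) \equiv n(D)$ and $r(D) \equiv n(D) + |L| \bmod 2$ via that lemma) is the expected argument. Nothing to add.
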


\begin{proposition} \label{prop:s-pos-link}
    Let $L$ be a positive link, and $D$ be a positive diagram of $L$. Let $S$ be the Seifert surface of $L$ obtained by applying the Seifert's algorithm to $D$. Then 
    \[
        \s_c(L) = 2g(S) + |L| - 1.
    \]
    In particular for a positive knot $K$,
    \[
        \s_c(K) = 2g(S).
    \]
    \qed
\end{proposition}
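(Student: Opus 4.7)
The plan is to prove this by direct computation from the definition of $\s_c$, substituting in the three inputs we already have available: the value of $k_c(D)$ on a positive diagram, the relation between writhe and crossing number for positive diagrams, and the Euler characteristic formula from \Cref{lem:chiS}.

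First I would observe that since $D$ is a positive diagram, every crossing contributes $+1$ to the writhe, so $w(D) = n(D)$, where $n(D)$ is the number of crossings. Next, by \Cref{prop:k_posD}, positivity of $D$ gives $k_c(D) = 0$. Substituting these two facts into the definition
\[
    \s_c(L) = 2k_c(D) + w(D) - r(D) + 1
\]
collapses it to $\s_c(L) = n(D) - r(D) + 1$.

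The second step is to rewrite $n(D) - r(D)$ in terms of the topology of the Seifert surface $S$ obtained from $D$ by Seifert's algorithm. By \Cref{lem:chiS} we have $r(D) - n(D) = \chi(S) = 2 - 2g(S) - |L|$, hence $n(D) - r(D) = 2g(S) + |L| - 2$. Plugging this in gives
\[
    \s_c(L) = 2g(S) + |L| - 2 + 1 = 2g(S) + |L| - 1,
\]
which is the desired formula. Specializing to a knot $K$ (so $|L| = 1$) yields $\s_c(K) = 2g(S)$.

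There is no real obstacle here: the statement is a formal consequence of \Cref{prop:k_posD}, \Cref{lem:chiS}, and the identity $w = n$ for positive diagrams, together with the definition of $\s_c$. The substantive content was already packaged into \Cref{prop:k_posD}, whose proof used a sequence of $0$-resolving chain maps to bound $k_c(D)$ above by the $c$-divisibility of the $\ca$-cycle on the all-$0$-resolved diagram.
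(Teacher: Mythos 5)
Your proof is correct and is exactly the argument the paper intends (which is why it is stated with a bare \qed): combine $k_c(D)=0$ from \Cref{prop:k_posD}, $w(D)=n(D)$ for a positive diagram, and $\chi(S)=r(D)-n(D)=2-2g(S)-|L|$ from \Cref{lem:chiS} in the defining formula for $\s_c$. Nothing further is needed.
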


    \subsection{Behavior under cobordisms} \label{subsec:ca-cobordism}

Let $L, L'$ be two links in $\RR^3$, and $S \subset \RR^3 \times [0, 1]$ be an (oriented smooth) cobordism between $L$ and $L'$ with $\partial S = (-L) \times \{0\} \cup L' \times \{1\}$. Let $D, D'$ be diagrams of $L, L'$ respectively. Following \cite[Section 6.3]{khovanov2000} and \cite[Section 4]{rasmussen2010khovanov}, we construct a homomorphism 
\[
    \phi: H_c(D; R) \rightarrow H_c(D'; R).
\]

By modifying $S$ by a small isotopy, we may assume that $S$ decomposes as a union of elementary cobordisms, and that except for finite many $t$'s the section $S \cap (\RR^3 \times \{t\})$ is a link and also the corresponding diagram is regular. Decompose $S = \bigcup_{i=0}^{N-1} T_i$ so that each $T_i = S \cap (\RR^3 \times [t_i, t_{i + 1}])$ corresponds to a Reidemeister move or a Morse move. Let $L_i$ = $S \cap (\RR^3 \times \{t_i\})$ with $L_0 = L,\ L_N = L'$, and $D_i = p(L_i)$ with $D = D_0,\ D' = D_N$.

Each $T_i$ gives a homomorphism $\phi_i: H_c(D_i) \rightarrow H_c(D_{i+1})$. Namely, if $T_i$ corresponds to a Reidemeister move then $\phi_i$ is the isomorphism $\rho$ given in \Cref{prop:cacb-variance-under-rho}. If $T_i$ corresponds to a Morse move, then $\phi_i$ is induced from the chain map given by the corresponding operation of the Frobenius algebra $A$. Let $\phi$ be the composition of all $\phi_i$'s. The following is a generalization of \cite[Proposition 4.1]{rasmussen2010khovanov} and \cite[Proposition 3.2]{Rasmussen:2005vo}.

\begin{proposition} \label{prop:k-cobordism}
    In addition to the above setting, assume every component of $S$ has a boundary in $L$. Then the induced homomorphism
    \[
        \phi: H_c(D; R)_f \rightarrow H_c(D'; R)_f
    \]
    maps 
    \[
        \phi[\ca(D)] = \pm c^l [\ca(D')],\quad 
        l = \frac{1}{2}(-\Delta r + \Delta w - \chi(S)),
    \]
    where the prefixed $\Delta$ denotes the difference of the corresponding values of $D, D'$.
\end{proposition}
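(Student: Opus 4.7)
My approach is to decompose the cobordism $S$ into a composition of elementary pieces, verify the formula for each piece, and combine using the additivity of $\chi$, $r$, and $w$. First I would use \Cref{cor:H-normal-form} to reduce to the case $(h, t) = (c, 0)$, so the Frobenius algebra is $A = R[X]/(X(X-c))$ with $\ca = X$, $\cb = X - c$, and relations $\ca^2 = c\,\ca$, $\cb^2 = -c\,\cb$, $\ca\cb = 0$, $\Delta(\ca) = \ca \otimes \ca$, $\Delta(\cb) = \cb \otimes \cb$, $\epsilon(\ca) = \epsilon(\cb) = 1$. Decompose $S = \bigcup_{i=0}^{N-1} T_i$ as in the statement, with induced maps $\phi_i$. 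It suffices to prove for each $i$
\[
    \phi_i[\ca(D_i)] = \pm\, c^{l_i}[\ca(D_{i+1})],
    \qquad
    l_i = \tfrac{1}{2}\bigl(-\Delta r_i + \Delta w_i - \chi(T_i)\bigr),
\]
in $H_c(D_{i+1}; R)_f$; the total exponent $l = \sum_i l_i$ is then $\tfrac{1}{2}(-\Delta r + \Delta w - \chi(S))$ by additivity.

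For a Reidemeister piece $\chi(T_i) = 0$, and the identity is exactly \Cref{prop:cacb-variance-under-rho} combined with \Cref{cor:rw-2j}. For an oriented merge saddle joining two $\ca$-circles (resp.\ two $\cb$-circles) the map $m(\ca \otimes \ca) = c\,\ca$ (resp.\ $m(\cb \otimes \cb) = -c\,\cb$) yields $l_i = 1$, matching $\Delta r_i = -1,\ \chi(T_i) = -1$. For an oriented split saddle, $\Delta(\ca) = \ca \otimes \ca$ (resp.\ $\Delta(\cb) = \cb \otimes \cb$) yields $l_i = 0$, matching $\Delta r_i = 1,\ \chi(T_i) = -1$. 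For a death, after first inserting Reidemeister moves (already handled) to isolate the dying circle as a disjoint unknot colored $\ca$ or $\cb$, the counit $\epsilon(\ca) = \epsilon(\cb) = 1$ sends $\ca(D_i)$ cleanly to $\ca(D_{i+1})$, matching $l_i = 0$ with $\Delta r_i = -1,\ \chi(T_i) = 1$.

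Births are the one individual move that resists a direct computation, since $\iota(1) = 1$ is not a scalar multiple of $\ca$ or $\cb$ in $A$ without inverting $c$. This is precisely where the hypothesis that every component of $S$ has a boundary in $L$ enters: it guarantees that a Morse function $f: S \to [0, 1]$ can be chosen with no index-$0$ critical points, because any such critical point would produce a connected component of $S$ with empty lower boundary. Thus the decomposition may be taken to contain no births, so the analysis above covers every $T_i$.

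The main obstacle I expect is verifying that oriented saddles always merge or split circles in color-compatible ways, i.e.\ that oriented merges pair an $\ca$-circle with an $\ca$-circle (or $\cb$ with $\cb$) rather than mixing, and that oriented splits produce two same-colored outputs. The coloring assignment is global---it depends on the checkerboard coloring of the entire slice diagram---while a saddle move is local, so this compatibility requires a careful case analysis at the saddle. The key local observation is that at an oriented saddle the two strands are antiparallel; combined with the placement of the adjacent checkerboard regions on either side of the saddle arc, this forces both strands to see regions of the same color to their left, hence to inherit the same color under \Cref{algo:ab-coloring}. A separate check (disjoint vs.\ nested) confirms the analogous statement for splits. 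Once this is established, the algebra maps $m$ and $\Delta$ act exactly as predicted on the $\ca$-cycles and the proposition follows.
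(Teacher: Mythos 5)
Your handling of the Reidemeister pieces, saddles, and deaths matches the paper's computations, and your ``antiparallel arcs across the band region see the same checkerboard colour'' observation is indeed the right local reason why oriented merges pair same-coloured Seifert circles (the paper is equally terse on this point). The genuine gap is in your treatment of births. Your claim that the hypothesis lets you choose a Morse function with no index-$0$ critical points ``because any such critical point would produce a connected component of $S$ with empty lower boundary'' is false as stated: a local minimum does not create a separate component of $S$, since the born disk can be tubed by a later saddle into a component that does meet $L$. Cobordisms satisfying the hypothesis generically do contain births in their movie presentations. What is true is only that $S$ \emph{admits some} handle decomposition rel $L$ without $0$-handles; to use this you would need (\rn{1}) an ambient (Cerf-theoretic) rearrangement and cancellation argument showing the embedded surface can be isotoped in $\RR^3 \times [0,1]$ so that the height function has no minima, and (\rn{2}) the statement that the induced map $\phi$ is unchanged (up to the claimed form) when the decomposition of $S$ is replaced by the new one --- a functoriality-under-isotopy input that is neither proved nor cited in the paper, and without which you have only proved the proposition for specially chosen decompositions rather than for the map $\phi$ constructed from the given one.

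The paper avoids all of this by confronting births directly. It first reduces to the case where $c$ is invertible via the injection $H_c(D;R)_f \hookrightarrow H_c(D;R_c)$ (\Cref{cor:H(D)-tor}), so that $1 = c^{-1}(\a - \b)$ makes sense; a birth then sends an $\ca$-class to $c^{-1}$ times a difference of two $\ca$-classes, and contributes $j = -1$ to the exponent count. To control the resulting sums it introduces \emph{permissible orientations} of the partial cobordisms $S_i$, using the hypothesis that every component of $S$ meets $L$ precisely to identify permissible orientations of $S_i$ with the alternative orientations they induce on $L_i$; the image of $[\ca(D)]$ stays a $\pm c^{k_o}$-combination of $\ca$-classes indexed by permissible orientations, the coefficient of the given orientation never dies (only incoherent $1$-handle merges kill terms), and at the top the unique permissible orientation of $S$ isolates $\pm c^l[\ca(D')]$. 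The exponent is then computed as $l = J - n_0 + n_{1,m}$ and converted to $\frac{1}{2}(-\Delta r + \Delta w - \chi(S))$ using \Cref{cor:rw-2j}. If you want to salvage your route, you must either supply (\rn{1}) and (\rn{2}) above, or simply add the birth case to your per-piece analysis (with the localization step), which is exactly the paper's argument.
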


\begin{figure}[ht]
    \centering
    \includegraphics[scale=0.4]{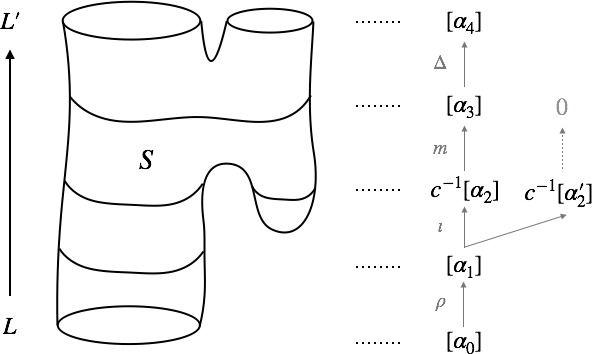}
    \caption{Cobordism map}
    \label{fig:cob-map}
\end{figure}

\begin{proof}
    First, we may assume that $c$ is invertible, since $H_c(D; R)_f \rightarrow H_c(D; R_c)$ is injective from \Cref{cor:H(D)-tor}. Let $S_i = S \cap (\RR^3 \times [0, t_i])$. Within the alternative orientations of $S_i$ (i.e.\ possible orientations on the underlying unoriented surface), there are ones that agree with the orientation of $L$ on the bottom boundary. We call such orientations to be \textit{permissible}. We also call an alternative orientation on $L_i$ to be \textit{permissible} if it is induced from a permissible orientation on $S_i$. Permissible orientations on $S_i$ and permissible orientations on $L_i$ correspond one-to-one, so we identify the two. Indeed, suppose $o, o'$ are permissible orientations that differ on a component $T$ of $S_i$ but induce the same orientation on $L_i$. First $T$ has no boundary in $L$, otherwise $o, o'$ must be equal on $T$. Similarly, $T$ has no boundary in $L_i$. This implies that $T$ is a closed component, which contradicts the assumption. 
	
	\begin{claim}
    	For each permissible orientation $o$ on $L_i$,
	    \[
	        \phi_i[\ca(D_i, o)] = \pm c^j\sum_{o'}[\ca(D_{i + 1}, o')]
	   \]
	   where $j \in \{0, \pm 1\}$, and the sum runs over a (possibly empty) set of permissible orientations of $L_{i + 1}$ that extend $o$.
	\end{claim}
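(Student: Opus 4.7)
The plan is to prove the claim by enumerating the possible types of the elementary cobordism $T_i$. Since $H_c(D;R)_f$ injects into $H_c(D;R_c)$ by the argument of \Cref{cor:H(D)-tor}, I may work in the localization $R_c$, so that $c$ is invertible; by \Cref{prop:ab-gen} the $\ca$-classes then form a basis of $H_c(-;R_c)$, and $\phi_i[\ca(D_i,o)]$ can be computed by expanding the result in this basis. The case analysis uses the Frobenius algebra identities in the $\a,\b$-basis: $m(\a\ox\a)=c\a$, $m(\b\ox\b)=-c\b$, $m(\a\ox\b)=m(\b\ox\a)=0$, $\Delta(\a)=\a\ox\a$, $\Delta(\b)=\b\ox\b$, $\iota(1)=c^{-1}(\a-\b)$, and $\epsilon(\a)=\epsilon(\b)=1$.

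The Reidemeister case is immediate from \Cref{prop:cacb-variance-under-rho}: the isomorphism $\rho$ sends $[\ca(D_i,o)]$ to $\pm c^j[\ca(D_{i+1},o')]$ with $j\in\{0,\pm 1\}$, where $o'$ is the corresponding orientation on $D_{i+1}$. Since $T_i$ is a cylinder, permissibility transports along $\rho$ and the sum on the right has exactly one term.

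For the Morse cases, I would treat the four handle types in turn. For a \emph{birth}, the newly created disk is a separate component of $S_{i+1}$, so both orientations of the new circle yield permissible extensions of $o$; using $\iota(1)=c^{-1}(\a-\b)$ and matching the two resulting colored states with the $\ca$-cycles of these two extensions gives the formula with $j=-1$. For a \emph{death}, the orientation of the dying circle is forced by $o$, hence the restriction to $L_{i+1}$ is the unique permissible extension, and applying $\epsilon$ to the corresponding tensor factor produces a single term with $j=0$. For a \emph{merging saddle}, if the local orientations at the saddle are incompatible then no permissible extension of $o$ exists, and I would verify that the two merged circles receive different colors so that $m(\a\ox\b)=m(\b\ox\a)=0$ gives the empty-sum case; if they are compatible, the two circles receive the same color, and $m(\a\ox\a)=c\a$ or $m(\b\ox\b)=-c\b$ yields $\pm c$ times the $\ca$-cycle of the unique merged orientation, so $j=1$. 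For a \emph{splitting saddle}, the original orientation extends uniquely, and $\Delta(\a)=\a\ox\a$ (resp.\ $\Delta(\b)=\b\ox\b$) gives a single $\ca$-cycle with $j=0$.

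The main obstacle will be the bookkeeping of the $\a,\b$-colors across the Morse moves. Because each Morse move alters the Seifert circle structure, the checkerboard coloring of $\RR^2$ used in \Cref{algo:ab-coloring} may change, and the colors of surviving Seifert circles are not literally transported by the chain map. For each move I therefore need a local analysis near the support of $T_i$ verifying that the colors produced by the Frobenius operation agree --- up to a uniform sign absorbed into $\pm$ --- with those assigned by the coloring algorithm on $D_{i+1}$. The geometric heart of the argument is the merging case: one must check that two circles merging at a saddle receive the same $\a,\b$-color precisely when their orientations are compatible at the saddle, so that the algebraic vanishing of $m$ on mixed colors lines up exactly with the geometric non-existence of a permissible extension. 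Once these local compatibilities are established in all four Morse cases, they combine with the Reidemeister case to yield the claim.
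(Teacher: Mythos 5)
Your proposal is correct and follows essentially the same route as the paper: reduce to the case that $c$ is invertible via the injection $H_c(D;R)_f \to H_c(D;R_c)$ from \Cref{cor:H(D)-tor}, invoke \Cref{prop:cacb-variance-under-rho} for Reidemeister moves, and compute the Morse moves with the structure constants in the $\a,\b$-basis, getting $j=-1$ for births, $j=0$ for deaths and circle-splitting saddles, $j=1$ for circle-merging saddles, and the zero map for orientation-incompatible saddles. The only (cosmetic) difference is that you organize the saddle case directly by whether the Seifert circles merge or split, whereas the paper first splits by whether link components split or merge and then notes that the $c$-exponent is governed by the Seifert-circle behavior; the color-compatibility checks you flag are exactly the (largely implicit) verifications underlying the paper's argument.
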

	
	For the Reidemeister moves, $o$ extends uniquely to a permissible orientation $o'$ and from \Cref{prop:cacb-variance-under-rho} we know the claim is true. For the 0-handle move, there are two permissible orientations that extend $o$, and the corresponding $\ca$-classes are $[\ca(D_i, o) \otimes \a]$ and $[\ca(D_i, o) \otimes \b]$. Since $1 = c^{-1}(\a - \b)$, the claim holds with $j = -1$. For the 1-handle move there are several cases to consider.
	(\rn{1}) Suppose the move splits a component of $L_i$. Then $o$ extends uniquely to a permissible orientation $o'$. If the move splits one of the Seifert circles (with respect to $o$) then $[\ca(D_i, o)]$ is mapped to $[\ca(D_{i + 1}, o')]$. If it merges two circles then it is mapped to $\pm c[\ca(D_{i + 1}, o')]$. (Note that the Seifert circle(s) may split or merge, regardless of the splitting of the link.)
	(\rn{2}) Suppose the move merges two components of $L_i$. If the orientations on the two components are coherent with respect to the handle, then the situation is similar to (\rn{1}). Otherwise $[\ca(D_i, o)]$ is mapped to $0$, since the two arcs where the handle is attached have the same direction and are colored differently. 
	Finally for the 2-handle move, $o$ extends uniquely to a permissible orientation $o'$. Since $\epsilon(\a) = \epsilon(\b) = 1$, we have $\phi_i[\ca(D_i, o)] = [\ca(D_{i + 1}, o')]$.
	
	\begin{claim} \label{cl:phi_i-image}
	    Suppose $x \in H(D_i)_f$ is of the form
	    \[
	        x = \sum_o (\pm c^{k_o})[\ca(D_i, o)]
	        \quad 
	        (k_o \in \ZZ)
	    \]
        where $o$ runs over a set of permissible orientations of $S_i$. Then the image of $x$ under $\phi_i$ also has the same form (possibly zero).
	\end{claim}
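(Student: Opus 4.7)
The plan is to apply the previous claim termwise to the sum defining $x$, and then collect the resulting terms according to permissible orientations on $L_{i+1}$. Explicitly, for each permissible orientation $o$ on $L_i$ appearing in the sum, the previous claim gives
\[
    \phi_i\bigl((\pm c^{k_o})[\ca(D_i, o)]\bigr) = (\pm c^{k_o + j_o}) \sum_{o'} [\ca(D_{i+1}, o')],
\]
where $j_o \in \{0, \pm 1\}$ and $o'$ ranges over a (possibly empty) set of permissible orientations of $L_{i+1}$ that extend $o$. Summing over $o$ and distributing therefore yields an expression of the form $\sum_{o'} (\pm c^{k_{o'}})[\ca(D_{i+1}, o')]$ provided we can argue that the $o$-indexed contributions do not overlap on the $o'$-side (so that no cancellations or additions that destroy the shape of the expression occur).

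The key observation for non-overlap is that the map from permissible orientations on $L_{i+1}$ to permissible orientations on $L_i$ given by restriction (through the unique permissible orientation on $S_{i+1}$ extending a permissible orientation on $L_{i+1}$, then restricted to $S_i$) is well-defined. This follows from the same argument used earlier in the proof: two permissible orientations on $S_{i+1}$ inducing the same orientation on $L_{i+1}$ can only differ on closed components of $S_{i+1}$, and the hypothesis that every component of $S$ has a boundary in $L$ rules those out. Hence the families of $o'$'s extending different $o$'s are disjoint, and each permissible $o''$ on $L_{i+1}$ appears in at most one of them, with a single coefficient $\pm c^{k_o + j_o}$ (where $o$ is the unique permissible orientation on $L_i$ extended by $o''$).

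Terms for which Claim 1 gives $0$ (for instance case (\rn{2}) of the $1$-handle move, where the two merged arcs are colored differently) simply drop out of the sum, which is consistent with the claimed form. Combining, we obtain
\[
    \phi_i(x) = \sum_{o''} (\pm c^{k_{o''}}) [\ca(D_{i+1}, o'')]
\]
ranging over permissible orientations $o''$ of $L_{i+1}$ (a subset of them, with the remaining coefficients being $0$), which is exactly the form asserted.

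The only subtle point in this argument is the disjointness/uniqueness of extensions, and I expect this to be the main thing to justify carefully; once that is in hand, the rest is a straightforward bookkeeping exercise using Claim 1.
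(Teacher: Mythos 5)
Your proof is correct and follows essentially the same route as the paper: the paper disposes of this claim in one line, citing the previous claim together with ``the fact that no two permissible orientations extend to the same one,'' which is exactly your disjointness/uniqueness-of-restriction observation. The justification you supply for that fact (two permissible orientations on $S_{i+1}$ inducing the same orientation on $L_{i+1}$ could differ only on a closed component, excluded by the hypothesis that every component of $S$ meets $L$) is precisely the argument the paper already gave for the one-to-one correspondence at level $i$, so you are simply making explicit what the paper leaves implicit.
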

	
	This is obvious from the previous claim and the fact that no two permissible orientations extend to the same one.
	
	\begin{claim} 
	    Regarding the given orientations on $D$ and $D'$,
	    \[
            \phi[\ca(D)] = \pm c^l [\ca(D')]
        \]
        for some integer $l$.
	\end{claim}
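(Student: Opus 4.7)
The plan is to apply \Cref{cl:phi_i-image} inductively through the decomposition $\phi = \phi_{N-1} \circ \cdots \circ \phi_0$. Starting from $x_0 := [\ca(D)]$, which is trivially a single-term sum over the given permissible orientation on $L_0 = L$, each successive $x_{i+1} := \phi_i(x_i)$ retains the form
\[
    x_{i+1} = \sum_{o} (\pm c^{k_o})[\ca(D_{i+1}, o)],
\]
where $o$ ranges over a set of permissible orientations on $L_{i+1}$. In particular $\phi[\ca(D)] = x_N$ is a sum of this form indexed by permissible orientations on $L' = L_N$.

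Next, I invoke the boundary hypothesis: every component of $S$ has boundary in $L$. This forces the given orientation on $S$ to be the unique permissible orientation on $S$, since any other would differ on some component that would then necessarily have no boundary in $L$. Applying the established bijection between permissible orientations on $S_N = S$ and those on $L_N = L'$, we conclude there is a unique permissible orientation on $L'$ — namely the given one. The sum therefore collapses to a single term $\phi[\ca(D)] = (\pm c^l)[\ca(D')]$ for some integer $l$, with negative powers interpreted via the convention of \Cref{prop:cacb-variance-under-rho}.

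The main subtlety I anticipate is confirming that this surviving coefficient is genuinely a $\pm$ monomial in $c$ and nonzero, rather than a sum that happens to cancel. For nonzero-ness, I would track the branch coming from the restriction of the given orientation on $S$ through each $\phi_i$: the prior case analysis shows this branch always maps with nonzero coefficient, since the only mechanism for producing zero — an incoherent 1-handle merge — cannot occur along an orientation that actually extends over the oriented surface $S$. For the monomial form, distinct branches cannot re-combine at later steps, because the bijection forces distinct permissible orientations on $L_i$ to extend to disjoint sets of permissible orientations on subsequent $L_j$'s. Together with the fact that $c$ is not a zero-divisor in the integral domain $R$, this yields a single $\pm c^l$ coefficient, as claimed.
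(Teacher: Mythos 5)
Your proposal is correct and follows essentially the same route as the paper: iterate \Cref{cl:phi_i-image}, use the hypothesis that every component of $S$ meets $L$ to conclude the given orientation of $S$ is the unique permissible orientation (hence a single surviving term indexed by the given orientation of $D'$), and rule out vanishing by noting that the only zero-producing move, an incoherently oriented 1-handle merge, cannot occur along the branch tracking the orientation of the oriented surface $S$. The observation that distinct permissible orientations extend disjointly, which keeps the coefficient a monomial $\pm c^{l}$, is likewise exactly the paper's justification of \Cref{cl:phi_i-image}.
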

	
	We see that the successive images of $[\ca(D)]$ are of the form of \Cref{cl:phi_i-image}. At each level $[\ca(D_i)]$ is mapped to $\pm c^j[\ca(D_{i+1})]$ (modulo other terms), since mapping to $0$ can happen only when a 1-handle merges inconsistently oriented components. At the end, there is only one permissible orientation on $S_N = S$, that is the given orientation of $S$. Thus $[\ca(D)]$ is mapped to some $c$-power multiple of $[\ca(D')]$. (The right side of \Cref{fig:cob-map} depicts the successive images under each $\phi_i$.)
	
	\medskip
	\noindent \textit{Proof continued.} 
    Now it remains to describe $l$. From the discussion of the previous claim, we see that $l$ is given by the sum of the $c$-exponents appearing in $\phi_i[\ca(D_i)]$. Let $n_0, n_1, n_2$ be the numbers of 0-, 1-, 2-handle moves respectively. Also let $n_1 = n_{1, m} + n_{1, s}$, where $n_{1, m}$ ($n_{1, s}$, resp.) is the number of times the Seifert circles of $D_i$ are merged (splitted, resp.) by the 1-handle move. Let $J$ be the sum of $c$-exponents for the Reidemeister moves. For the Morse moves, $j = -1$ occurs only by the 0-handle moves, and $j = +1$ occurs only by the 1-handle moves that the Seifert circles merge. Thus we have 
    \[
        l = J - n_0 + n_{1, m}.
    \]
    
    Let $\Delta r = \Delta r_R + \Delta r_M$, where $\Delta r_R$ (resp. $\Delta r_M$) is the sum of the differences of $r$ at each step corresponding to the Reidemeister move (resp. Morse move). Obviously
    \[
        \Delta r_M = n_0 - n_{1, m} + n_{1, s} - n_2.
    \]
    $w$ is constant under the Morse moves, and from \Cref{cor:rw-2j} we have
    \[
        -J = \frac{\Delta r_R - \Delta w}{2}.
    \]
    Thus
    \begin{align*}
        l 
            &= -\frac{1}{2}(\Delta r_R - \Delta w) - n_0 + n_{1, m} \\
            &= \frac{1}{2}(-\Delta r + \Delta r_M + \Delta w - 2n_0 + 2n_{1, m}) \\
            &= \frac{1}{2}(-\Delta r + \Delta w - n_0 + (n_{1, m} + n_{1, s}) - n_2) \\
            &= \frac{1}{2}(-\Delta r + \Delta w - \chi(S)).
    \end{align*}
    \setcounter{claim}{0}
\end{proof}

\begin{proposition} \label{prop:hat-s-cobordism}
    With the assumption of \Cref{prop:k-cobordism}, we have
    \[
        \s_c(L') - \s_c(L) \geq \chi(S).
    \]
    If also every component of $S$ has a boundary in both $L$ and $L'$, then
    \[
        |\s_c(L') - \s_c(L)| \leq -\chi(S).
    \]
    \qed
\end{proposition}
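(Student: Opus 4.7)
The plan is to feed the divisibility equation from \Cref{prop:k-cobordism} through the filtration inequalities of \Cref{lem:c-add} and \Cref{lem:c-div} and then unwind the definition of $\s_c$. Concretely, \Cref{prop:k-cobordism} gives $\phi[\ca(D)] = \pm c^l [\ca(D')]$ in the torsion-free module $H_c(D'; R)_f$ with $l = \tfrac{1}{2}(-\Delta r + \Delta w - \chi(S))$. Since this module is torsion-free, the equality case of \Cref{lem:c-add} reads off $k_c(\phi[\ca(D)]) = k_c(D') + l$; combined with $k_c(D) \leq k_c(\phi[\ca(D)])$ from \Cref{lem:c-div} this yields
\[
    k_c(D') - k_c(D) \geq \tfrac{1}{2}(\Delta r - \Delta w + \chi(S)).
\]

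Next I would substitute into $\s_c(L) = 2k_c(D) + w(D) - r(D) + 1$. The $\Delta w$ and $\Delta r$ contributions from the definition cancel the ones appearing in $l$, and I obtain $\s_c(L') - \s_c(L) = 2\,\Delta k_c + \Delta w - \Delta r \geq \chi(S)$, proving the first inequality.

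For the second inequality I would apply the first to the reversed cobordism $-S \subset \RR^3 \times [0,1]$ from $L'$ to $L$, obtained by the reparametrization $t \mapsto 1-t$, which has the same Euler characteristic as $S$. Under the stronger hypothesis that every component of $S$ meets both $L$ and $L'$, the condition of \Cref{prop:k-cobordism} continues to hold for $-S$ (every component has boundary in $L'$). Hence $\s_c(L) - \s_c(L') \geq \chi(S)$ as well, and combining the two bounds yields $|\s_c(L') - \s_c(L)| \leq -\chi(S)$.

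The one subtlety to watch is the possibility that $l < 0$ in the first step. I would resolve this using the convention introduced after \Cref{prop:cacb-variance-under-rho}: when $l < 0$ the equation $\phi[\ca(D)] = \pm c^l [\ca(D')]$ is interpreted as $c^{-l}\phi[\ca(D)] = \pm [\ca(D')]$, and applying $k_c$ to both sides of this genuine equation via the torsion-free equality in \Cref{lem:c-add} gives the relation $k_c(\phi[\ca(D)]) = k_c(D') + l$ uniformly in the sign of $l$. Everything else in the argument is essentially algebraic bookkeeping riding on top of \Cref{prop:k-cobordism}, so I expect no further obstacles.
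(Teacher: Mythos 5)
Your argument is correct and is exactly the intended one: the paper leaves this proposition as an immediate consequence of \Cref{prop:k-cobordism}, obtained by combining the divisibility equation with \Cref{lem:c-add} and \Cref{lem:c-div} and then applying the same bound to the reversed cobordism for the second inequality. Your handling of the $l<0$ convention and of the hypothesis needed to reverse the cobordism matches what the paper's setup requires.
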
 

\begin{remark}
    In the latter case $\chi(S) \leq 0$ since each component $T$ of $S$ has at least two boundary components and $\chi(T) \leq 0$.
\end{remark}

From \Cref{prop:hat-s-cobordism}, we obtain properties of $\s_c$ that are common to $s$. We can also reprove the Milnor conjecture using $\s_c$. 

\begin{maintheorem} \label{thm:s_c-properties}
    The following holds:
    \begin{enumerate}
        \item $\s_c$ is a link concordance invariant,
        \item $|\s_c(K)| \leq 2g_*(K)$ for any knot $K$, and
        \item $\s_c(K) = 2g_*(K) = 2g(K)$ if $K$ is positive,
    \end{enumerate}
    where $g_*$ is the slice genus, and $g$ is the ordinary genus of a knot.
    \qed
\end{maintheorem}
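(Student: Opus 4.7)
The plan is to deduce all three statements as direct consequences of \Cref{prop:hat-s-cobordism}, with part (3) additionally invoking \Cref{prop:s-pos-link}. In each case the strategy is the same: exhibit an appropriate cobordism whose Euler characteristic controls the change in $\s_c$.

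For part (1), I would take a smooth concordance $S$ between knots $K$ and $K'$, namely a smoothly embedded annulus in $\RR^3 \times [0,1]$ with $\partial S = (-K) \times \{0\} \cup K' \times \{1\}$. Then $\chi(S) = 0$ and the single component of $S$ meets both boundary links, so the second inequality of \Cref{prop:hat-s-cobordism} gives $|\s_c(K') - \s_c(K)| \leq 0$, hence $\s_c(K) = \s_c(K')$.

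For part (2), the plan is to convert a minimal-genus slice surface $F \subset B^4$ for $K$ (of genus $g_*(K)$, taken connected since $K$ is a knot) into a cobordism from $K$ to the unknot $\bigcirc$ by removing a small open disk from the interior of $F$. The resulting connected cobordism $S$ has $\chi(S) = -2g_*(K)$ and meets both boundary components, so \Cref{prop:hat-s-cobordism} yields $|\s_c(K) - \s_c(\bigcirc)| \leq 2g_*(K)$. The desired bound then follows from $\s_c(\bigcirc) = 0$ (\Cref{prop:s-properties}).

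For part (3), I begin from \Cref{prop:s-pos-link}, which gives $\s_c(K) = 2g(S_D)$ for the Seifert surface $S_D$ produced by Seifert's algorithm applied to any positive diagram $D$ of $K$. Combining this with the elementary inequalities $g_*(K) \leq g(K) \leq g(S_D)$ and the slice bound from part (2), one obtains the chain
\[
    2g(S_D) = \s_c(K) \leq 2g_*(K) \leq 2g(K) \leq 2g(S_D),
\]
which forces equality throughout. The substantive work was already carried out in establishing \Cref{prop:k-cobordism}, \Cref{prop:hat-s-cobordism}, and \Cref{prop:s-pos-link}, so no serious obstacle remains; the only points requiring brief verification are the connectedness and boundary-incidence hypotheses of \Cref{prop:hat-s-cobordism}, and both the concordance annulus in (1) and the punctured connected slice surface in (2) satisfy these by construction.
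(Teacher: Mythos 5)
Your proposal is correct and is essentially the paper's own argument: the paper leaves the theorem as an immediate consequence of \Cref{prop:hat-s-cobordism} (together with \Cref{prop:s-properties} and \Cref{prop:s-pos-link}), and your annulus, punctured slice surface, and squeeze $2g(S_D)=\s_c(K)\leq 2g_*(K)\leq 2g(K)\leq 2g(S_D)$ are exactly the intended Rasmussen-style deductions. One small point: part (1) claims invariance under \emph{link} concordance, while you only treat knots; the same argument applies verbatim, since a link concordance is a disjoint union of annuli, so $\chi(S)=0$ and every component of $S$ meets both boundary links, and \Cref{prop:hat-s-cobordism} again gives $|\s_c(L')-\s_c(L)|\leq 0$.
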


\begin{corollary}[The Milnor Conjecture \cite{Milnor1968}]
    The slice genus and the unknotting number of the $(p, q)$ torus knot are both equal to $(p - 1)(q - 1)/2$.
    \qed
\end{corollary}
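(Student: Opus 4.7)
The plan is to reduce the Milnor conjecture to a direct application of part (3) of \Cref{thm:s_c-properties}, which gives the equality $\bar{s}_c(K) = 2g_*(K) = 2g(K)$ for any positive knot $K$. The torus knot $T(p, q)$ (with $p, q > 0$) admits a standard diagram $D$ in which every crossing is positive, so $T(p, q)$ is a positive knot and the theorem applies directly. Thus the entire argument collapses to computing the Seifert genus of $T(p, q)$ from its standard diagram, and then separately bounding the unknotting number $u$ from above.

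For the genus computation, I would apply \Cref{prop:s-pos-link} (or equivalently \Cref{lem:chiS}) to the standard diagram $D$ of $T(p, q)$, which has $n(D) = q(p-1)$ crossings (viewing the knot as a closed $p$-braid with $p-1$ generators repeated $q$ times) and $r(D) = p$ Seifert circles (one per strand). This gives
\[
    2g(S) = r(D) - n(D) - |L| + 1 + n(D) - \cdots
\]
Rather than grind through the arithmetic, the cleaner route is to invoke \Cref{lem:chiS} directly: $\chi(S) = r(D) - n(D) = p - q(p-1)$, which combined with $\chi(S) = 2 - 2g(S) - 1$ yields $g(S) = (p-1)(q-1)/2$. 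Hence by part (3),
\[
    2g_*(T(p, q)) = \bar{s}_c(T(p, q)) = 2g(S) = (p-1)(q-1),
\]
giving $g_*(T(p, q)) = (p-1)(q-1)/2$.

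For the unknotting number, I would first use the classical inequality $u(K) \geq g_*(K)$, which follows from the observation that each crossing change can be realized by attaching a band, so an unknotting sequence of length $u$ gives a genus-$u$ cobordism from $K$ to the unknot (which caps off to a surface in $B^4$ of genus $u$). This yields $u(T(p, q)) \geq (p-1)(q-1)/2$. The matching upper bound is the standard explicit construction: in the braid representative $(\sigma_1 \sigma_2 \cdots \sigma_{p-1})^q$, one exhibits $(p-1)(q-1)/2$ specific crossing changes that convert the braid word into one representing the unknot. This construction is classical (it appears in Milnor's original paper and elsewhere) and requires no new input from our theory.

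The only genuinely nontrivial step is the lower bound $g_*(T(p, q)) \geq (p-1)(q-1)/2$, and this is precisely what part (3) of \Cref{thm:s_c-properties} provides; everything else is either a diagrammatic count of Seifert circles and crossings or a classical construction. The main conceptual obstacle has already been absorbed into the proof of \Cref{prop:k-cobordism} and \Cref{prop:hat-s-cobordism}, on which part (3) rests. Thus the proof of the corollary itself is short and follows the same outline as Rasmussen's original argument in \cite{rasmussen2010khovanov}, with $\bar{s}_c$ playing the role of $s$.
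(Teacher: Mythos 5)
Your proposal is correct and follows exactly the route the paper intends: the corollary is left as an immediate consequence of part (3) of \Cref{thm:s_c-properties} applied to the positive braid diagram of the torus knot, with the genus computed via \Cref{lem:chiS} (or \Cref{prop:s-pos-link}) and the unknotting number handled by the classical inequality $g_*(K) \leq u(K)$ together with the standard explicit unknotting sequence, just as in Rasmussen's original argument with $\s_c$ in place of $s$. No gaps; your arithmetic $\chi(S) = p - q(p-1)$, hence $g = (p-1)(q-1)/2$, is right.
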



We state some more properties of $k_c$ and $\s_c$ that follows from \Cref{prop:hat-s-cobordism}.

\begin{proposition} \label{prop:s-link-mirror}
    \begin{itemize}
        \item For any link $L$, 
            \[ 1 - 2|L| \leq \s_c(L \sqcup \bar{L}) \leq 1. \]
        \item For any link diagram $D$,
            \[ r(D) - |D| \leq k_c(D \sqcup \bar{D}) \leq r(D). \]
    \end{itemize}
\end{proposition}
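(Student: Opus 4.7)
The plan is to reduce the two displayed inequalities to a single statement about $\s_c$ and derive it via the cobordism bound \Cref{prop:hat-s-cobordism}. Since $w(\bar{D}) = -w(D)$ and $r(\bar{D}) = r(D)$, we have $w(D \sqcup \bar{D}) = 0$ and $r(D \sqcup \bar{D}) = 2r(D)$, so the definition of $\s_c$ gives
\[
    \s_c(L \sqcup \bar{L}) = 2k_c(D \sqcup \bar{D}) - 2r(D) + 1,
\]
whence the $k_c$-bounds $r(D) - |D| \leq k_c(D \sqcup \bar{D}) \leq r(D)$ are equivalent to $1 - 2|L| \leq \s_c(L \sqcup \bar{L}) \leq 1$, and I prove the latter.

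The geometric input is the classical fact that $K \sqcup \bar{K}$ bounds a smoothly embedded annulus in $B^4$ for every knot $K$. A direct construction: place $K \subset \mathbb{R}^3 \times \{0\} \subset \mathbb{R}^4$ and let $R_\theta \in SO(4)$ rotate the last two coordinates by $\theta$; then $R_0 = \id$ and $R_\pi$ restricts on $\mathbb{R}^3 \times \{0\}$ to the reflection $(x, y, z) \mapsto (x, y, -z)$, so $R_\pi(K) = \bar{K}$, and the trace $\bigcup_{\theta \in [0, \pi]} R_\theta(K)$ is an embedded annulus in any sufficiently large $B^4$ with boundary $K \sqcup \bar{K}$. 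Applied componentwise to $L$, this yields a properly embedded surface $A = A_1 \sqcup \cdots \sqcup A_{|L|}$ in $B^4$ consisting of $|L|$ disjoint annuli with $\partial A = L \sqcup \bar{L}$ and $\chi(A) = 0$.

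For the upper bound, I convert $A$ into a cobordism $S : L \sqcup \bar{L} \to \bigcirc$ in $S^3 \times [0, 1]$ by removing from $\mathrm{int}(B^4)$ a small open $4$-ball that meets $A_1$ transversely in a single disk and is disjoint from the other $A_i$. Identifying the complement $B^4 \setminus (\text{small open ball}) \cong S^3 \times [0, 1]$ with $L \sqcup \bar{L}$ on the outer boundary (bottom) and the new circle $\bigcirc$ on the inner boundary (top), the truncated surface $S$ consists of a pair of pants (from $A_1$ minus an open disk) together with $|L| - 1$ annuli ($A_2, \ldots, A_{|L|}$), so $\chi(S) = -1$. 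Every component of $S$ has boundary in $L \sqcup \bar{L}$, hence the first part of \Cref{prop:hat-s-cobordism} gives $\s_c(\bigcirc) - \s_c(L \sqcup \bar{L}) \geq -1$, i.e., $\s_c(L \sqcup \bar{L}) \leq 1$.

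For the lower bound, I make $S$ connected by attaching $|L| - 1$ interior tubes in $S^3 \times (0, 1)$, each joining the pair of pants to one of the annular components via an arc in the ambient $4$-manifold disjoint from $S$. Each tube removes two small open disks from $S$ and glues in a cylinder $S^1 \times [0, 1]$, so $\chi$ decreases by $2$ per tube while $\partial S$ is unchanged. The resulting connected cobordism $S'$ satisfies $\chi(S') = -1 - 2(|L| - 1) = 1 - 2|L|$, and since it is connected every component has boundary in both $L \sqcup \bar{L}$ and $\bigcirc$. The second part of \Cref{prop:hat-s-cobordism} then gives $|\s_c(\bigcirc) - \s_c(L \sqcup \bar{L})| \leq 2|L| - 1$, whence $\s_c(L \sqcup \bar{L}) \geq 1 - 2|L|$. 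The main step to check carefully is the rotation-in-$\mathbb{R}^4$ construction of $A$; the rest is routine $4$-dimensional handle topology.
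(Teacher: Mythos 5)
Your overall strategy is the same as the paper's: reduce the $k_c$ bounds to the $\s_c$ bounds (your bookkeeping $w(D \sqcup \bar{D}) = 0$, $r(D \sqcup \bar{D}) = 2r(D)$ is exactly right), then feed an explicit oriented cobordism of controlled Euler characteristic between $L \sqcup \bar{L}$ and an unlink into \Cref{prop:hat-s-cobordism}. The paper does this with a single componentwise cobordism to the $|L|$-component unlink of Euler characteristic $-|L|$ in which every component meets both ends, so both inequalities come from one surface; your variant (puncture one annulus for the upper bound, then tube for the lower bound) needs two surfaces but yields the same numbers, and your applications of \Cref{prop:hat-s-cobordism} together with $\s_c(\bigcirc) = 0$ are arithmetically correct.

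The step you yourself flagged is where the gaps are. A minor one: the trace $\bigcup_{\theta} R_\theta(K)$ is an embedded annulus only after $L$ is isotoped into the open half-space $\{z > 0\}$; points of $K$ on the fixed plane $\{z = w = 0\}$ are fixed by every $R_\theta$ and pinch the trace (this also guarantees that the bottom link is genuinely the split union). More seriously, orientations: for either orientation of the rotation annulus, its two boundary circles inherit opposite signs, so with both circles at the bottom the oriented boundary is $\pm\bigl(K \sqcup (-\bar{K})\bigr)$ --- the knot together with its \emph{reverse} mirror --- and never $-(K \sqcup \bar{K})$ as required by the convention $\partial S = (-L) \times \{0\} \cup L' \times \{1\}$ underlying \Cref{prop:k-cobordism} and \Cref{prop:hat-s-cobordism}. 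This is not pedantry: $K \sqcup K'$ bounds an annulus in $B^4$ exactly when $K'$ is the concordance inverse $-\bar{K}$, and a knot need not be concordant to its reverse, so your claimed fact "$K \sqcup \bar{K}$ bounds an annulus for every $K$" is false with the carried-orientation convention for the mirror; as written your argument bounds $\s_c(L \sqcup (-\bar{L}))$. The patch is cheap: either adopt the reverse-mirror convention for $\bar{L}$, or note that reversing the orientation of the $\bar{D}$ factor changes neither $w$, $r$, nor $k_c(D \sqcup \bar{D})$ --- apply the automorphism $X \mapsto h - X$ of \Cref{prop:k-reverse} to the $C(\bar{D})$ tensor factor of $C(D \sqcup \bar{D}) \isom C(D) \ox C(\bar{D})$, which sends $\ca(D) \ox \ca(\bar{D})$ to $\pm\, \ca(D) \ox \cb(\bar{D})$ --- so $\s_c(L \sqcup \bar{L}) = \s_c(L \sqcup (-\bar{L}))$ and your bounds transfer. (To be fair, the paper's one-line proof elides the same orientation point, and its ``$|L|$ saddles'' must likewise be read as $|L|$ pairs of pants of total Euler characteristic $-|L|$: by genus superadditivity under band sum, a single saddle cannot carry $K \sqcup \bar{K}$ to an unknot when $K$ is knotted.)
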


\begin{proof}
    There is a cobordism consisting of $|L|$ saddles connecting $L \sqcup \bar{L}$ to the $|L|$-component unlink.
\end{proof}

\begin{corollary}
    Suppose $R$ is a PID and $c$ is prime in $R$. Then:
    \begin{itemize}
        \item For any link $L$, 
            \[ 2 - 2 |L| \leq \s_c(L) + \s_c(\bar{L}) \leq 2. \]
        \item For any link diagram $D$,
            \[ r(D) - |D| \leq k_c(D) + k_c(\bar{D}) \leq r(D). \]
    \end{itemize}
    \qed
\end{corollary}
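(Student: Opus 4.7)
The plan is to reduce both inequalities directly to the preceding \Cref{prop:s-link-mirror} by invoking the two additivity-under-disjoint-union results that hold in the PID/prime setting, namely the $\s_c$-equality in \Cref{prop:s-properties}(3') and the $k_c$-equality in \Cref{prop:k-disj-union}. In other words, the content of the corollary is simply a rewriting of the bounds already obtained for $L \sqcup \bar{L}$ and $D \sqcup \bar{D}$, with the disjoint unions replaced by their summands via exact additivity.

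Concretely, for the first bullet I would begin by applying \Cref{prop:s-properties}(3') to the pair $(L, \bar{L})$ to get the equality
\[
    \s_c(L \sqcup \bar{L}) = \s_c(L) + \s_c(\bar{L}) - 1,
\]
and then substitute this into the chain $1 - 2|L| \leq \s_c(L \sqcup \bar{L}) \leq 1$ from \Cref{prop:s-link-mirror}. Adding $1$ throughout yields $2 - 2|L| \leq \s_c(L) + \s_c(\bar{L}) \leq 2$, as desired. (One can note in passing that $|\bar L| = |L|$, which is why the factor of $2$ on the left-hand side is correct.)

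For the second bullet the situation is even more immediate. Under the hypothesis that $R$ is a PID and $c$ is prime, the equality case of \Cref{prop:k-disj-union} gives
\[
    k_c(D \sqcup \bar{D}) = k_c(D) + k_c(\bar{D}),
\]
and substituting this into the bound $r(D) - |D| \leq k_c(D \sqcup \bar{D}) \leq r(D)$ of \Cref{prop:s-link-mirror} gives the stated inequality without further modification.

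There is no real obstacle here: the only non-formal ingredients are the two additivity equalities in the PID/prime case, which have already been established. The proof is thus a two-line combination and I would present it as a short paragraph rather than a numbered argument.
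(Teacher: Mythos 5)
Your proposal is correct and is exactly the argument the paper intends: the corollary is stated with no written proof precisely because it follows by substituting the additivity equalities (\Cref{prop:s-properties}(3') and the equality case of \Cref{prop:k-disj-union}) into the bounds of \Cref{prop:s-link-mirror}. Nothing further is needed.
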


\begin{proposition} \label{prop:cross-rm-bound}
    Let $D$ be a link diagram. Let $D'$ be the diagram obtained from $D$ by removing one crossing in the orientation preserving way. If the crossing is positive, then
    \[
        k_c(D) \leq k_c(D') \leq k_c(D) + 1.
    \]
    If it is negative, then
    \[
        k_c(D) - 1 \leq k_c(D') \leq k_c(D).
    \]
\end{proposition}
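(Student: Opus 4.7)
The plan is to realize the oriented resolution as an elementary saddle cobordism and apply \Cref{prop:k-cobordism} in both directions. The diagrams $D$ and $D'$ are related by attaching a single 1-handle to the trivial cobordism on $L$, yielding an elementary cobordism $S$ from $L$ to $L'$ with $\chi(S) = -1$. Since $S$ is a single saddle, every component of $S$ has boundary in both $L$ and $L'$, so the hypotheses of \Cref{prop:k-cobordism} are satisfied for both $S$ and the reversed cobordism $-S$.

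The key combinatorial input is that $\Delta r := r(D') - r(D) = 0$. This holds because the oriented resolution at a crossing is precisely the local operation used to extract Seifert circles: resolving that one crossing of $D$ first (to obtain $D'$) and then resolving the remaining crossings of $D'$ produces the same planar collection of circles as resolving all crossings of $D$ at once.

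Then I would plug these values into the formula $l = \tfrac{1}{2}(-\Delta r + \Delta w - \chi(S))$ of \Cref{prop:k-cobordism}. For a positive crossing $\Delta w = -1$, so the forward direction gives $l = 0$ and $\phi[\ca(D)] = \pm [\ca(D')]$, yielding $k_c(D) \leq k_c(D')$ by \Cref{lem:c-div}; the reverse cobordism gives $\Delta w = +1$ and $l = 1$, so $\phi'[\ca(D')] = \pm c\,[\ca(D)]$, and combining \Cref{lem:c-div} with \Cref{lem:c-add} (applicable since we pass to the torsion-free quotient) produces $k_c(D') \leq k_c(D) + 1$. The negative crossing case is entirely symmetric, with the roles of $l = 0$ and $l = 1$ swapped between the two directions, yielding $k_c(D) - 1 \leq k_c(D') \leq k_c(D)$. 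The one substantive point in the argument is the Seifert-circle equality $r(D) = r(D')$; once it is in hand, everything else is arithmetic.
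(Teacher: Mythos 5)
Your proposal is correct and is essentially the paper's argument: the paper's one-line proof ("removing a crossing can be realized by attaching a 1-handle near the crossing") is exactly your saddle-cobordism observation, implicitly combined with \Cref{prop:k-cobordism} applied in both directions together with \Cref{lem:c-add} and \Cref{lem:c-div}, and your supporting points ($\Delta r = 0$ since the oriented resolution commutes with forming Seifert circles, torsion-freeness of the free part for the equality in \Cref{lem:c-add}) are the right details to fill in.
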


\begin{proof}
    Removing a crossing can be realized by attaching a 1-handle near the crossing.
\end{proof}

\begin{corollary} \label{cor:k_c-bound}
	For any link diagram $D$,
	\[
	    0 \leq k_c(D) \leq n^-(D)
	\]
	where $n^-(D)$ is the number of negative crossings of $D$. \qed
\end{corollary}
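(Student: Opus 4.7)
The lower bound $0 \le k_c(D)$ is immediate from the definition, since $k_c(z) = \max\{k \ge 0 : z \in c^k M\}$ is by convention non-negative. So the content is in the upper bound $k_c(D) \le n^-(D)$.

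The plan is to remove the negative crossings of $D$ one at a time (in the orientation preserving way) and track how $k_c$ changes. Order the $n^-(D)$ negative crossings arbitrarily, set $D_0 = D$, and let $D_i$ be obtained from $D_{i-1}$ by removing the $i$-th negative crossing. Each $D_i$ is a well-defined link diagram (of a possibly different link), and after $n^-(D)$ such removals we arrive at a diagram $D^+ := D_{n^-(D)}$ whose every remaining crossing is positive, i.e.\ a positive diagram.

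Now apply the two key inputs already established. At each step, Proposition \ref{prop:cross-rm-bound} (the negative-crossing case) gives
\[
    k_c(D_{i-1}) - 1 \le k_c(D_i) \le k_c(D_{i-1}),
\]
in particular $k_c(D_{i-1}) \le k_c(D_i) + 1$. Iterating,
\[
    k_c(D) = k_c(D_0) \le k_c(D_1) + 1 \le \cdots \le k_c(D^+) + n^-(D).
\]
Since $D^+$ is positive, Proposition \ref{prop:k_posD} yields $k_c(D^+) = 0$, and the upper bound follows.

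There is essentially no obstacle here: the whole argument is a telescoping application of two results proved earlier in the section. The only mild subtlety is noticing that Proposition \ref{prop:cross-rm-bound} is purely a statement about diagrams (not about any fixed link), so its repeated use along the sequence $D_0, D_1, \ldots, D^+$ is legitimate even though the underlying link is changing at each step.
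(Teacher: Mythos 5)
Your proof is correct and is exactly the argument the paper intends: the corollary is stated without proof precisely because it follows by iterating the negative-crossing case of \Cref{prop:cross-rm-bound} over all negative crossings and then invoking \Cref{prop:k_posD} on the resulting positive diagram, together with the trivial lower bound from the definition of $k_c$. Your remark that the underlying link may change along the sequence of diagrams is a fair point and does not affect the argument.
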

    \subsection{Coincidence with $s$}
\label{subsec:coincidence-with-s}

In this section we restrict to knots, and to $(R, c) = (F[h], h)$ where $F$ is a field of $\fchar{F} \neq 2$ and $h$ is an indeterminate of degree $-2$. Let $R_0 = F[h^2] \subset R$. Note that both $R$ and $R_0$ are PIDs. Let $D$ be a knot diagram. We denote
\begin{align*}
    C(D; R_0) &= C_{0, (h/2)^2}(D; R_0), \\
    C(D; R) &= C_{0, (h/2)^2}(D; R),
\end{align*}
and the corresponding homology groups by $H(D; R_0),\ H(D; R)$ respectively. Note that $H(D; R)$ is naturally isomorphic to $H_\BN(D; R) = H_{h, 0}(D; R)$ from \Cref{cor:H-normal-form}. Since $H(D; R_0)_f$ is torsion-free, the natural map
\[
    H(D; R_0)_f \longrightarrow H(D; R_0)_f \otimes_{R_0} R
\]
is injective. Also since $R$ is flat over $R_0$ (for $R$ is torsion-free and $R_0$ is a PID), we have 
\[
    H(D; R_0)_f \otimes R \isom (H(D; R_0) \otimes R)_f \isom H(D; R)_f.
\]
Under this correspondence we regard $H(D; R_0)_f \subset H(D; R)_f$. Let $(\ca, \cb) = (\ca(D), \cb(D))$. Note that $\ca, \cb \notin C(D; R_0)$. The following two lemmas are analogous to \cite[Lemma 3.5]{rasmussen2010khovanov}.

\begin{lemma} \label{lem:k-mirror-lem2}
    $C(D; R_0)$ decomposes into a direct sum of two subcomplexes:
    \begin{align*}
        C(D; R_0)'  &= \Span_{R_0} \{\ x \mid \qdeg(x) \equiv |D| \bmod{4}\ \} \\
        C(D; R_0)'' &= \Span_{R_0} \{\ x \mid \qdeg(x) \equiv |D| + 2 \bmod{4}\ \}.
    \end{align*}
    where $x$ runs over the enhanced states of $D$. \qed
\end{lemma}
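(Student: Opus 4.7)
The plan is to observe that $C(D; R_0)$ is genuinely bigraded in this setup, and then to cut it along the q-degree modulo $4$. Writing $C(D; R_0) = C_{0, (h/2)^2}(D; R_0)$, the parameters are $h' = 0$ and $t' = (h/2)^2$. Viewed inside $R_0 = F[h^2]$, the element $(h/2)^2$ is homogeneous of q-degree $-4$, so the graded condition from \Cref{subsec:frob-n-khovanov} is satisfied and the differential $d$ preserves the q-degree $j$. This yields a bigraded decomposition $C(D; R_0) = \bigoplus_{i, j} C^{i, j}(D; R_0)$ whose summands are preserved by $d$. Applying \Cref{lem:ckh-supp}, only q-degrees with $j \equiv |D| \bmod 2$ contribute, so the occurring q-degrees split into exactly two classes modulo $4$, namely $|D|$ and $|D| + 2$.

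The next step is to set
\[
    C(D; R_0)'  = \bigoplus_{j \equiv |D| \bmod 4} C^{\cdot, j}(D; R_0),
    \qquad
    C(D; R_0)'' = \bigoplus_{j \equiv |D| + 2 \bmod 4} C^{\cdot, j}(D; R_0).
\]
Each is $d$-invariant since $d$ preserves $j$, and their direct sum exhausts $C(D; R_0)$ by \Cref{lem:ckh-supp}. To match this with the $R_0$-span description in the statement, I will observe that every homogeneous element of $R_0 = F[h^2]$ has q-degree $-4k$ for some $k \geq 0$, so multiplying an enhanced state by an element of $R_0$ shifts its q-degree by a multiple of $4$. Consequently the $R_0$-span of the enhanced states $x$ with $\qdeg(x) \equiv |D| \bmod 4$ is exactly the direct sum of those $C^{\cdot, j}$ with $j \equiv |D| \bmod 4$, and similarly for the class $\equiv |D| + 2 \bmod 4$.

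There is essentially no genuine obstacle here beyond bookkeeping; the single ingredient that requires verification, that $d$ preserves the q-degree, is already packaged in the graded condition. If a self-contained check were preferred, it would amount to a short case analysis on $m$ and $\Delta$: across a merge the quantity $|s| + r$ is unchanged, while across a split it grows by $2$, and in each case this precisely compensates the internal $\deg$-change of the tensor factors, including the factor $(h/2)^2$ of degree $-4$ that appears in $m(X \otimes X)$ and in the second term of $\Delta(X)$.
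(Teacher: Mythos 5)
Your proposal is correct and matches the argument the paper treats as immediate: the differential of $C_{0,(h/2)^2}(D;R_0)$ strictly preserves the q-degree, Proposition~\ref{lem:ckh-supp} pins the q-degrees of enhanced states to two residues mod $4$, and coefficients in $R_0=F[h^2]$ only shift degrees by multiples of $4$, so the two spans are complementary subcomplexes. Note that the grading condition as literally printed in Section~\ref{subsec:frob-n-khovanov} has the degrees of $h$ and $t$ swapped (it should read $\deg h=-2$, $\deg t=-4$), so your closing direct check of $m$ and $\Delta$ is the right way to justify the degree preservation rather than citing that condition verbatim.
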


\begin{lemma} \label{lem:k-mirror-lem3}
    Two elements
    \begin{align*}
        \cx &= 
            (\ca + \cb) / 2,
        \\
        \cy &= 
            (\ca - \cb) / h.
    \end{align*}
    belong to $C(D; R_0)$. Either one is contained in $C(D; R_0)'$ and the other is in $C(D; R_0)''$. 
\end{lemma}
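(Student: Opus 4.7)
The plan is to obtain explicit formulas for $\cx$ and $\cy$ by expanding $\ca$ and $\cb$ in the factored Frobenius algebra, and then read off both claims from the expansion. With the conventions of this section, $u = -h/2$ and $v = h/2$, so $\a = X + h/2$ and $\b = X - h/2$. Writing the $\ca$-coloring of the $r = r(D)$ Seifert circles as signs $\sigma_i \in \{\pm 1\}$ (with $\sigma_i = +1$ encoding $\a$ and $\sigma_i = -1$ encoding $\b$), and noting that reversing the orientation swaps $\a \leftrightarrow \b$, one has
\[
    \ca = \bigotimes_{i=1}^r \bigl(X + \sigma_i h/2\bigr), \qquad \cb = \bigotimes_{i=1}^r \bigl(X - \sigma_i h/2\bigr).
\]

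For the first claim, I would expand each tensor product over subsets $S \subseteq \{1, \dots, r\}$ recording the positions of $X$. The term indexed by $S$ appears in $\ca$ with coefficient $\epsilon_S (h/2)^{r - |S|}$ and in $\cb$ with coefficient $\epsilon_S (-h/2)^{r - |S|}$, where $\epsilon_S = \prod_{i \notin S} \sigma_i$. Thus only $S$ with $r - |S|$ even contribute to $\ca + \cb$, while only $S$ with $r - |S|$ odd contribute to $\ca - \cb$. After dividing, $\cx$ has surviving coefficients proportional to $(h/2)^{r - |S|}$ with $r - |S|$ even, and $\cy$ has surviving coefficients proportional to $(h/2)^{r - |S| - 1}$ with $r - |S| - 1$ even. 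In both cases the exponents of $h/2$ are even, so the coefficients lie in $R_0 = F[h^2]$, giving $\cx, \cy \in C(D; R_0)$.

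For the second claim, I would compute q-degrees. Using $\qdeg(X) = -2$ and $\qdeg(h) = -2$, every monomial in the expansion of $\ca$ has the same q-degree $w(D) - r(D)$ (trading a factor $X$ for a coefficient $h/2$ preserves the total q-degree), so $\ca$ and hence $\cx$ are homogeneous of q-degree $w - r$. Similarly $\cy$ is homogeneous of q-degree $w - r + 2$, since dividing by $h$ raises the q-degree by $2$. These two values differ by $2$ modulo $4$, so by \Cref{lem:k-mirror-lem2} exactly one of $\cx, \cy$ lies in $C(D; R_0)'$ and the other in $C(D; R_0)''$.

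The argument reduces to combinatorial bookkeeping; the only detail requiring care is matching the parity of $r - |S|$ with the even exponent of $h/2$ that survives after dividing by $2$ or by $h$, respectively. No substantial obstacle is anticipated.
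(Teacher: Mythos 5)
Your proof is correct and takes essentially the same route as the paper: both expand $\ca$ and $\cb$ into enhanced states of the Seifert state, note that the coefficients surviving in $\cx$ and $\cy$ are even powers of $h/2$, hence powers of $t = (h/2)^2 \in R_0$, and then separate the two elements using the mod-$4$ q-degree of the contributing enhanced states. Your homogeneity-in-degree phrasing of the second step is just a repackaging of the paper's observation that $\cx$ (resp.\ $\cy$) involves only states with an even (resp.\ odd) number of $1$'s, so nothing further is needed.
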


\begin{proof}
    Recall $\a = X + (h/2),\ \b = X - (h/2)$. By expanding $\ca$ into linear combinations of enhanced states, we see that $\cx = (\ca + \cb)/2$ is the part of $\ca$ with even numbers of $(h/2)$'s in its tensor factors, and $(\ca - \cb)/2 = (h/2) \cy$ is the odd part. Both $\cx, \cy$ belong to $C(D; R_0)$, since the coefficients are powers of $t = (h/2)^2 \in R_0$. The two elements live separately in the summands, since $\cx$ (resp.\ $\cy$) has even (resp.\ odd) number of $1$'s in each of its tensor factors.
\end{proof}

Let $A = A_{0, (h/2)^2}$. We endow an $A$-module structure on $C(D; R)$ following \cite{Khovanov:2002wo}. Take any point $p$ on an arc of $D$, and a small circle $\bigcirc$ near $p$. Merging $\bigcirc$ into a neighbourhood of $p$ corresponds to the multiplication
\[
    m_p: A \ox C(D) \longrightarrow C(D).
\]
Define an endomorphism $X_p$ by
\[
    X_p = m_p(X \ox -) : C(D) \rightarrow C(D).
\]

We can prove the following by tracing the proofs of \cite[Lemma 2.3]{Hedden:2012iz}, \cite[Lemma 2.1]{Alishahi:2017ug} and \cite[Lemma 3.3]{Alishahi:2017wl}.

\begin{lemma} \label{lem:X_p endo}
    If $p, q$ are two marked points on a strand of $D$ separated by a crossing $x$, then $X_p$ and $-X_q$ are chain homotopic. \qed
\end{lemma}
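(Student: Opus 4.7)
The plan is to construct an explicit chain homotopy $H$ satisfying $dH + Hd = X_p + X_q$, which is equivalent to the asserted homotopy $X_p \simeq -X_q$. I would follow the strategy of the cited papers of Hedden--Ni and Alishahi--Eftekhary, adapted to the generalized Frobenius algebra $A = A_{0,(h/2)^2} = R[X]/(X^2 - t)$ with $t = (h/2)^2$.

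First, I would resolve the crossing $x$ and express $C(D)$ as a mapping cone
\[
    C(D) \;\cong\; \operatorname{Cone}\bigl(d_x : C(D_0) \to C(D_1)\bigr),
\]
where $D_0, D_1$ are the 0- and 1-resolutions of $D$ at $x$ and $d_x$ is the local saddle map between them. Up to a small isotopy I may assume that $p, q$ lie in a neighbourhood of $x$. Because $p$ and $q$ sit on a single strand separated only by $x$, in exactly one of the two resolutions they lie on a common state circle (and the local edge map is a multiplication $m$), while in the other they lie on two distinct circles (and the local edge map is a comultiplication $\Delta$).

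Next, I would define $H : C(D_1) \to C(D_0)$, viewed as the off-diagonal entry of an endomorphism of the mapping cone, to be the reverse local saddle: $\Delta$ where $d_x = m$, and $m$ where $d_x = \Delta$, extended as the identity on the remaining tensor factors. Since $H$ is supported at the local picture of $x$, it commutes (up to the cube signs) with all other edge maps, and the verification $dH + Hd = X_p + X_q$ reduces to two identities in $A$:
\begin{align*}
    m \circ \Delta &= 2X \cdot \mathrm{id}_A, \\
    \Delta(ab) &= (Xa) \otimes b + a \otimes (Xb).
\end{align*}
The first accounts for the piece on which $p, q$ share a circle, where $X_p + X_q$ acts as $2X$; the second, a form of the Frobenius relation, accounts for the piece on which they lie on two circles, where $X_p + X_q$ acts as $X$ on the first factor plus $X$ on the second. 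Both are direct computations from $\Delta(1) = X \otimes 1 + 1 \otimes X$, $\Delta(X) = X \otimes X + t \otimes 1$, and $X^2 = t$, so they hold for $A_{0,(h/2)^2}$ just as for the classical $R[X]/(X^2)$ considered in the cited references.

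The most delicate step is bookkeeping signs arising from the cube of resolutions: the edge map $d_x$ carries a sign depending on the position of $x$ in the chosen ordering of crossings, and the chosen $H$ must be decorated with the matching sign so that the cross-terms in $dH + Hd$ cancel against the internal differentials of $C(D_0)$ and $C(D_1)$. This is precisely the sign analysis carried out in the cited references, and the same bookkeeping transfers here without change; the only new verification required on our side is that the two local Frobenius identities above, which are the algebraic content of the homotopy, survive the passage to $A_{0,(h/2)^2}$ --- and they do, unchanged.
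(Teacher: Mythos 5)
Your proposal is correct and is essentially the argument the paper relies on: the paper proves this lemma by citing Hedden--Ni and Alishahi--Eftekhary, whose proofs are exactly your cone-plus-reverse-saddle homotopy, and the only new content needed is the pair of local identities $m\circ\Delta = 2X\cdot\mathrm{id}$ and $\Delta(ab) = Xa\otimes b + a\otimes Xb$, which you verify correctly in $A_{0,(h/2)^2}$ (they hold precisely because the Frobenius $h$-parameter is $0$ here). The only nitpick is the parenthetical pairing of ``common circle'' with $m$ versus ``distinct circles'' with $\Delta$: which resolution has $p,q$ on a common circle can be either the source or target of $d_x$, but since your two identities cover both compositions $\sigma d_x$ and $d_x\sigma$, this does not affect the argument.
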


Thus we obtain an endomorphism $X$ by 
\[
    X \cdot x = 
    \begin{cases}
        \ \ X_p(x) 
            & \text{if the arc containing $p$ is colored $\a$,} \\
        -X_p(x) 
            & \text{otherwise.}
    \end{cases}
\]
$X$ is defined independent of the choice of the point $p$. This gives an $R[X]$-module structure on $C(D; R)$, and obviously it descends to an $A$-module structure.

\begin{lemma}
    $X$ maps $C(D; R_0)'$ to $C(D; R_0)''$ and vice versa.
\end{lemma}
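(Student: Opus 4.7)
My plan is to reduce the statement to a q-degree computation, by showing that $X$ is an endomorphism of $C(D; R_0)$ that is homogeneous of q-degree $-2$. Since $-2 \equiv 2 \pmod{4}$, this immediately implies that $X$ exchanges the parts with $\qdeg \equiv |D|$ and $\qdeg \equiv |D| + 2$ modulo $4$, i.e., swaps $C(D; R_0)'$ and $C(D; R_0)''$ as defined in \Cref{lem:k-mirror-lem2}.

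First I would analyze the elementary map $X_p = m_p(X \ox -)$ on a single enhanced state $x$. Because $X_p$ does not change the underlying state $s$, in
\[
    \qdeg(x) = \deg(x) + |s| + r(D, s) + n^+ - 2n^-
\]
only $\deg$ can move. If the label on the $s$-circle containing $p$ is $1$, then it becomes $X$, so $\deg$ drops by $2$. If the label is $X$, then in $A = A_{0, (h/2)^2}$ we have $X \cdot X = (h/2)^2 \cdot 1$; the label flips from $X$ to $1$, raising $\deg$ by $2$, while the coefficient $(h/2)^2$ contributes degree $-4$ (recall $\deg h = -2$), for a net change of $-2$. Hence $\qdeg(X_p(x)) = \qdeg(x) - 2$ in both cases.

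Next I would verify that $X_p$ preserves the $R_0$-submodule $C(D; R_0)$: the only scalar that could spoil this is $(h/2)^2 = h^2/4$, which lies in $R_0 = F[h^2]$ thanks to $\fchar F \neq 2$. Since the sign in $X = \pm X_p$ depends only on the coloring of the arc containing $p$ and not on the element being acted on, $X$ is itself $R_0$-linear, preserves $C(D; R_0)$, and shifts $\qdeg$ by $-2$. The lemma follows. I do not expect any genuine obstacle here; the argument is essentially bookkeeping of degrees, made possible by the particular choice of Frobenius algebra $A_{0, (h/2)^2}$, whose vanishing linear coefficient ensures that $X^2$ produces only a scalar multiple of $1$ rather than a cross term involving $X$ that could escape $C(D; R_0)$ or land in the wrong q-degree parity.
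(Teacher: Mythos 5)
Your argument is correct and is essentially the paper's proof (which simply observes that $X_p$ has q-degree $-2$), with the bookkeeping — the label computations $1 \mapsto X$, $X \mapsto (h/2)^2\cdot 1$, the fact that $(h/2)^2 \in R_0 = F[h^2]$ since $\fchar F \neq 2$, and the irrelevance of the sign in $X = \pm X_p$ — spelled out explicitly. Nothing is missing.
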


\begin{proof}
    Obvious since $X_p$ is a map of q-degree $-2$.
\end{proof}

\begin{lemma} \label{lem:X_p}
    $X$ maps:
    \begin{align*}
        \ca &\longmapsto \ \ (h/2)\ca, \\
        \cb &\longmapsto - (h/2)\cb, \\
        \cx &\longmapsto (h/2)^2\cy, \\
        \cy &\longmapsto \cx.
    \end{align*}
\end{lemma}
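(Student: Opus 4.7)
The plan is to reduce the whole lemma to the one-line computations of $X \cdot \a$ and $X \cdot \b$ inside the Frobenius algebra $A = A_{0, (h/2)^2}$, and then to derive the four stated formulas by linearity together with the relation between $\ca$ and $\cb$.

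First I would compute, using $X^2 = (h/2)^2$ in $A$, that $X \cdot \a = (h/2)\a$ and $X \cdot \b = -(h/2)\b$. Next, I fix any point $p$ on $D$ lying on a Seifert circle $C$. In the coloring of $\ca = \ca(D)$ the circle $C$ carries some color $\epsilon \in \{\a, \b\}$, and $X_p$ acts on $\ca$ by multiplying the tensor factor attached to $C$ by $X$. The previous computation then gives $X_p(\ca) = (h/2)\ca$ when $\epsilon = \a$ and $X_p(\ca) = -(h/2)\ca$ when $\epsilon = \b$. The sign convention built into the definition of $X$ is exactly what collapses the two cases into the single formula $X \cdot \ca = (h/2)\ca$, which also re-confirms that the answer is independent of the chosen point $p$. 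For $\cb = \ca(D, -o)$, I would observe that reversing the orientation of the one-component diagram $D$ leaves the Seifert decomposition unchanged but globally swaps $\a \leftrightarrow \b$ on every Seifert circle, because the coloring rule in \Cref{algo:ab-coloring} is phrased in terms of ``left,'' which reverses under orientation reversal. Running the same argument at the same point $p$ then yields $X \cdot \cb = -(h/2)\cb$, and the formulas for $\cx, \cy$ follow by $R$-linearity:
\[
    X \cdot \cx = \tfrac{1}{2}(X\ca + X\cb) = \tfrac{h}{4}(\ca - \cb) = (h/2)^2\,\cy,
    \qquad
    X \cdot \cy = \tfrac{1}{h}(X\ca - X\cb) = \tfrac{1}{2}(\ca + \cb) = \cx.
\]

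I do not expect a serious obstacle. The only delicate point is that the sign in $X = \pm X_p$ must combine correctly with the coloring of $\ca$ at $p$, but this is exactly what the definition of $X$ was arranged to handle. Independence of $p$, the one genuinely nontrivial ingredient, has already been granted by the chain homotopy $X_p \simeq -X_q$ across a crossing together with the fact that colors alternate across every crossing (\Cref{lem:ab-bipartite}), so nothing further is needed beyond unwinding the color conventions.
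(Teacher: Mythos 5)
Your proposal is correct and follows essentially the same route as the paper: compute $X\cdot\a=(h/2)\a$, $X\cdot\b=-(h/2)\b$ from $X^2=(h/2)^2$, apply $X_p$ to the tensor factor at $p$ with the sign convention of $X$ absorbing the two coloring cases (the swap $\a\leftrightarrow\b$ between $\ca$ and $\cb$ being exactly what the paper uses when writing $\ca=\a\otimes\ca'$, $\cb=\b\otimes\cb'$), and then get the $\cx,\cy$ formulas by linearity. No gaps.
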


\begin{proof}
	Suppose the arc containing $p$ is colored $\a$. We may write $\ca = \a \otimes \ca'$ and $\cb = \b \otimes \cb'$ for some $\ca', \cb'$. Then 
   	\begin{align*}
   		\ca = 
   		    (X + h/2) \otimes \ca'\ 
   		    &\overset{X_p}{\longmapsto}\ 
   		    ((h/2)^2 + (h/2)X) \otimes \ca' 
   		    = (h/2)\ca \\
   		\cb = 
   		    (X - h/2) \otimes \cb'\ 
   		    &\longmapsto\ 
   		    ((h/2)^2 - (h/2)X) \otimes \cb' 
   		    = - (h/2)\cb.
   	\end{align*}
    The result is similar for the other case. The images of $\cx, \cy$ are obvious from definition.
\end{proof}

\begin{proposition} \label{prop:H(D)-basis}
    There is a unique class $[\cz] \in H(D; R_0)_f$ such that:
    \begin{itemize}
        \item 
            $\{\ [\cz], X [\cz]\ \}$ is a basis of $H(D; R_0)_f$ and of $H(D; R)_f$, and,
            
        \item 
            $[\ca], [\cb] \in H(D; R)_f$ can be written as
           	\begin{align*}
           		[\ca] &= \quad\ h^k(\ X[\cz] + (h/2)[\cz] \ ) \\
           		[\cb] &= (-h)^k(\ X[\cz] - (h/2)[\cz]\ )
           	\end{align*}
            where $k = k_h(D)$.
    \end{itemize}
\end{proposition}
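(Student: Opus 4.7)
My plan is to construct $[\cz]$ explicitly from $[\ca]$ and $[\cb]$, verify the two displayed identities by direct calculation, and establish the basis property via a cyclic module structure on $H(D; R_0)_f$.

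First, the preceding lemmas endow $H(D; R_0)_f$ with the structure of a module over $A_0 := R_0[X]/(X^2 - (h/2)^2)$, since $X$ preserves $C(D; R_0)$ and respects its decomposition into $q$-degree summands. Because $R_0 = F[h^2]$ is a PID and $H(D; R_0)_f$ is torsion-free of $R_0$-rank $2$ (matching the $R$-rank of $H(D; R)_f \isom H(D; R_0)_f \ox_{R_0} R$), it is free of rank $2$ over $R_0$. The key structural input is that $H(D; R_0)_f$ is in fact free of rank $1$ as an $A_0$-module: since $\pm h/2 \notin R_0$, the minimal polynomial of $X$ on $H(D; R_0)_f$ is exactly $Y^2 - (h/2)^2$, which must then equal its characteristic polynomial, so $X$ admits a cyclic vector $[\cz_0]$ with $H(D; R_0)_f = R_0 [\cz_0] \oplus R_0 X[\cz_0]$.

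Next I define $[\cz]$ by inverting the target formulas. Adding and subtracting them yields
\[
    [\ca] - (-1)^k[\cb] = h^{k+1}[\cz], \qquad
    [\ca] + (-1)^k[\cb] = 2 h^k X[\cz],
\]
which, using $2[\cx] = [\ca] + [\cb]$ and $h[\cy] = [\ca] - [\cb]$, unfold to $[\cz] = [\cy]/h^k$ and $X[\cz] = [\cx]/h^k$ when $k$ is even, and to $[\cz] = 2[\cx]/h^{k+1}$ and $X[\cz] = [\cy]/(2h^{k-1})$ when $k$ is odd. The required divisibilities in $H(D; R_0)_f$ are verified by expanding $[\cx], [\cy]$ in the $R_0$-basis $\{[\cz_0], X[\cz_0]\}$: the placement of $[\cx], [\cy]$ in the complementary $q$-degree summands $H(D;R_0)'_f$ and $H(D;R_0)''_f$ combined with the relations $X[\cy] = [\cx]$ and $X[\cx] = (h/2)^2 [\cy]$ forces them to take the form $P[\cz_0]$ and $P X[\cz_0]$ (or $tQ[\cz_0]$ and $QX[\cz_0]$) for suitable $P, Q \in R_0$, and $k = k_h([\ca])$ then pins down the $h$-valuation of $P$ or $Q$. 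The displayed formulas for $[\ca]$ and $[\cb]$ then follow by substituting $[\ca] = [\cx] + (h/2)[\cy]$ and $[\cb] = [\cx] - (h/2)[\cy]$.

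The main obstacle is showing $\{[\cz], X[\cz]\}$ is an $R_0$-basis of $H(D; R_0)_f$ (and hence, after tensoring with $R$, an $R$-basis of $H(D; R)_f$). Writing $[\cz] = v \cdot [\cz_0]$ with $v \in R_0$, this is equivalent to $v$ being a unit in $R_0$, i.e.\ $v \in F^*$. Maximality of $k$ gives $v$ coprime to $h$, but ruling out higher-order contributions requires using the $q$-filtration: the divided class $\gamma := [\ca]/h^k$ is a filtration leader, and tracking its extremal $q$-degree through the identification $\gamma = v(X+h/2)[\cz_0]$ forces $v \in F^*$. Uniqueness is then routine: if $[\cz]'$ also satisfies both formulas, then $(X \pm h/2)([\cz] - [\cz]') = 0$, whence $h \cdot ([\cz] - [\cz]') = 0$, so $[\cz] = [\cz]'$ by torsion-freeness.
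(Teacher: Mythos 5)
Your overall architecture (an $A$-module viewpoint, defining $[\cz]$ by inverting the target formulas, and the clean uniqueness argument via $(X\pm h/2)([\cz]-[\cz]')=0$) is reasonable, and the arithmetic of the inversion is correct. But the two load-bearing steps are not justified as written. First, the inference ``minimal polynomial $=$ characteristic polynomial, hence a cyclic vector exists'' is a statement about matrices over a field; over the ground ring it fails. For instance, over $R_0=\ZZ$ the endomorphism $X=\left(\begin{smallmatrix}0&2\\2&0\end{smallmatrix}\right)$ of $\ZZ^2$ has minimal polynomial equal to its characteristic polynomial $Y^2-4$, yet $\ZZ^2$ is not cyclic over $\ZZ[X]/(X^2-4)$, since $\det(v,Xv)=2(a^2-b^2)$ is never a unit. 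Your conclusion that $H(D;R_0)_f$ is free of rank one over $A_0=R_0[X]/(X^2-(h/2)^2)$ happens to be true, but it needs an actual argument (e.g.\ that $A_0\isom F[X]$ is itself a PID and $H(D;R_0)_f$ is $A_0$-torsion-free, or, as in the paper, that the two $q$-degree summands are each free of rank one over $R_0$ and the matrix entries $p,q$ of $X$ in homogeneous generators lie in $F[h^2]$ with $pq=(h/2)^2$, forcing one of them to be a unit). Since this cyclicity carries most of the content of the proposition, asserting it via an invalid principle begs the question. Relatedly, your claim that $[\cx],[\cy]$ must be of the form $P[\cz_0]$, $PX[\cz_0]$ (or $tQ[\cz_0]$, $QX[\cz_0]$) tacitly assumes $[\cz_0]$ and $X[\cz_0]$ are homogeneous for the mod-$4$ $q$-degree decomposition; for an arbitrary cyclic generator nothing prevents $[\cy]=u[\cz_0]+vX[\cz_0]$ with both $u,v\neq 0$, so the homogeneity of the generator must be established, not assumed.

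Second, the decisive point --- that the coefficient $v$ with $[\cz]=v[\cz_0]$ is a unit of $R_0$ and not merely coprime to $h$ (e.g.\ $v=1+h^2$ must be excluded, otherwise $\{[\cz],X[\cz]\}$ is not a basis) --- is exactly where extra input beyond the formal relations $X[\cy]=[\cx]$, $X[\cx]=(h/2)^2[\cy]$ and the maximality of $k$ is required, and your proposal only gestures at it: ``$\gamma=[\ca]/h^k$ is a filtration leader'' is not defined, and no mechanism is given that rules out a common factor of $[\ca]$ and $[\cb]$ prime to $h$. This can be made rigorous, for example by using that $\ca$ is a $q$-homogeneous cycle in the bigraded theory $C_{0,(h/2)^2}(D;F[h])$ together with a $q$-homogeneous choice of $[\cz_0]$, or by invoking \Cref{prop:ab-gen}: after inverting $h$ the classes $[\ca],[\cb]$ form a basis, so the determinant of their coordinate matrix is, up to units, a power of $h$, which kills any extraneous common factor. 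As it stands, the step that carries the proposition's conclusion is missing, so the proof is incomplete even though the intended route can be repaired along the lines above (and then becomes close to the paper's own argument, which works directly with homogeneous generators $[\cx_0],[\cy_0]$ of the two summands and derives the dichotomy $x\sim 1,\ y\sim h/2$ or $x\sim h/2,\ y\sim 1$).
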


\begin{proof}
    From \Cref{lem:k-mirror-lem2} we have $H(D; R_0) = H(D; R_0)' \oplus H(D; R_0)'' \isom (R_0)^2$. From \Cref{lem:k-mirror-lem3} each summand is isomorphic to $R_0$, so there is a basis $\{ [\cx_0], [\cy_0] \}$ of $H(D; R_0)_f$ such that \[
        [\cx] = x'[\cx_0],\quad 
        [\cy] = y'[\cy_0]
    \]
    for some $x', y' \in R_0$, and
    \begin{align*}
        [\ca] = [\cx] + (h/2)[\cy] = x'[\cx_0] + (h/2)y'[\cy_0], \\
        [\cb] = [\cx] - (h/2)[\cy] = x'[\cx_0] - (h/2)y'[\cy_0].
    \end{align*}
    
    We have $H(D; R_0)_f \otimes R \isom H(D; R)_f$ so $\{ [\cx_0], [\cy_0] \}$ is also a basis of $H(D; R)_f$. From the definition of $k$,
    \[
        [\ca], [\cb] \in h^k H(D; R)_f = R\< h^k[\cx_0], h^k[\cy_0] \>.
    \]
    From 
    \[
        [\cx] = ([\ca] + [\cb])/2,\quad
        (h/2)[\cy] = ([\ca] - [\cb])/2,
    \]
    there are $x, y \in R$ such that
    \[
        x' = h^k x, \quad (h/2)y' = h^k y.
        \tag{1} \label{eq:1}
    \]
    and
   	\begin{align*}
   		[\ca] &= h^k(\ x[\cx_0] + y[\cy_0] \ ) \\
   		[\cb] &= h^k(\ x[\cx_0] - y[\cy_0]\ ).
   	\end{align*}
   	
    First, $x$ and $y$ are not commonly divisible by $h$ from the maximality of $k$. With the endomorphism $X$, we have:
    \begin{align*}
        [\cx] = h^k x[\cx_0] \ 
            &\overset{X}{\longmapsto} \ 
            (h/2)^2[\cy] 
            = (h/2) h^k y[\cy_0] 
            = h^k x X[\cx_0] \\
        (h/2)[\cy] = h^k y[\cy_0] \ 
            &\overset{X}{\longmapsto} \ 
            (h/2)[\cx] 
            = (h/2) h^k x[\cx_0] 
            = h^k y X[\cy_0]
   	\end{align*}
   	so
    \begin{align*}
        (h/2) y[\cy_0] &= x X[\cx_0]
        \tag{2} \label{eq:A}
        \\
        (h/2) x[\cx_0] &= y X[\cy_0]
        \tag{3} \label{eq:B}.
   	\end{align*}
   	Since $[\cx_0], [\cy_0]$ form a basis, this implies
    \[ 
        x \mid (h/2)y, \quad y \mid (h/2)x.
    \]
    Together with \eqref{eq:1} we have 
    \[
        \begin{cases}
            x \sim 1, \ y \sim h/2 & \text{if $k$ is even,}\\
            x \sim h/2, \ y \sim 1 & \text{if $k$ is odd}
        \end{cases}
    \]
    Thus we may define
    \[
        [\cz] = \begin{cases}
            (h/2)^{-1}y[\cy_0] & \text{if $k$ is even} \\
            (h/2)^{-1}x[\cx_0] & \text{if $k$ is odd}.
        \end{cases}
    \]
    We check that $[\cz]$ satisfies the required conditions. If $k$ is even, then from \eqref{eq:B} we have
    \[
        x [\cx_0] = (h/2)^{-1}y X[\cy_0] = X[\cz].
    \]
    Thus $[\cz], X[\cz]$ are associated to $[\cy_0], [\cx_0]$ respectively, and
   	\begin{align*}
   		[\ca] &= h^k(\ X[\cz] + (h/2)[\cz]\ ), \\
   		[\cb] &= h^k(\ X[\cz] - (h/2)[\cz]\ ).
   	\end{align*}
   	
    Similarly if $k$ is odd, then $[\cz], X[\cz]$ are associated to $[\cx_0], [\cy_0]$ respectively, and 
   	\begin{align*}
   		[\ca] &= h^k(\ (h/2)[\cz] + X[\cz] \ ), \\
   		[\cb] &= h^k(\ (h/2)[\cz] - X[\cz]\ ).
   	\end{align*}
   	
    Hence in both cases $\{\ [\cz], X [\cz]\ \}$ form a basis of $H(D; R_0)_f$ and of $H(D; R)_f$, and the desired descriptions of $[\ca], [\cb]$ hold. Uniqueness follows by comparing the descriptions of $[\ca], [\cb]$.
\end{proof}
\setcounter{case}{0}

There is unimodular pairing
\[
    \< -, - \>: C(D) \otimes C(\bar{D}) \longrightarrow R
\]
defined by the composition of the isomorphism $T: C(\bar{D}) \rightarrow C(D)^*$ of \Cref{prop:C_ht-basic} and the standard pairing between $C(D)$ and $C(D)^*$. From a general argument of homological algebra, this descends to 
\[
    \< -, - \>: H(D)_f \otimes H(\bar{D})_f \longrightarrow R,
\]
and is unimodular since $R$ is a PID.

\begin{notation}
    We write
    \[
        \<
            \begin{pmatrix}
                a \\ b
            \end{pmatrix}
            ,\ 
            \begin{pmatrix}
                c & d
            \end{pmatrix}
        \>
        =
        \begin{pmatrix}
            \<a, c \> & \<a, d \> \\
            \<b, c \> & \<b, d \> \\
        \end{pmatrix}.
    \]
\end{notation}

\begin{lemma}
\label{lem:cacb-pairing} Let $(\ca, \cb) = (\ca(D), \cb(D))$ and $(\bar{\ca}, \bar{\cb}) = (\ca(\bar{D}), \cb(\bar{D})$. Then
    \[
        \<
            \begin{pmatrix}
                \ca \\ \cb
            \end{pmatrix}
            ,\ 
            \begin{pmatrix}
                \bar{\ca} & \bar{\cb}
            \end{pmatrix}
        \>
        =
        h^{r(D)}
        \begin{pmatrix}
            (-1)^b & 0 \\
            0 & (-1)^a  \\
        \end{pmatrix}
    \]
    where $a, b$ are the numbers of $\a$'s and $\b$'s in the tensor factors of $\ca$.
\end{lemma}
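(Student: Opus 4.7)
The plan is to reduce the computation to the Frobenius inner product on $A$, taken one tensor factor at a time over the Seifert circles. By definition $\<-,-\>$ is the composition of the isomorphism $T \colon C(\bar{D}) \to C(D)^*$ from \Cref{prop:C_ht-basic}(3) with the evaluation pairing. Under the natural bijection between states of $D$ and $\bar{D}$ that flips $0$- and $1$-resolutions at every crossing, the orientation-preserving state of $D$ matches the orientation-preserving state of $\bar{D}$, and both produce the same set of Seifert circles (the $0$-resolution of a positive crossing of $D$ and the $1$-resolution of the corresponding negative crossing of $\bar{D}$ give the same oriented local smoothing). On this common summand $A^{\otimes r(D)}$, the map $T$ becomes the standard self-pairing induced by the Frobenius form $\<x, y\> = \epsilon(xy)$, up to an overall sign that does not affect the final statement. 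Because the coloring algorithm uses only the Seifert circles as oriented planar curves, $\ca(\bar{D})$ carries exactly the same colors $\gamma_i \in \{\a, \b\}$ on the corresponding circles as $\ca(D)$.

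Once this identification is in place, the rest is mechanical. From $\a^2 = h\a$, $\b^2 = -h\b$, $\a\b = 0$, and $\epsilon(X) = 1$, $\epsilon(1) = 0$, the four building-block pairings are
\[
    \<\a, \a\> = h,
    \quad
    \<\b, \b\> = -h,
    \quad
    \<\a, \b\> = \<\b, \a\> = 0.
\]
The pairing on $A^{\otimes r(D)}$ is multiplicative across tensor factors, so
\[
    \<\ca(D), \ca(\bar{D})\> = \prod_{i=1}^{r(D)} \<\gamma_i, \gamma_i\> = h^a \cdot (-h)^b = (-1)^b h^{r(D)}.
\]
Since $\cb = \ca(D, -o)$ and reversing orientation exchanges $\a$ with $\b$ on every Seifert circle (``left'' and ``right'' swap in the coloring algorithm), the analogous product for $\<\cb(D), \cb(\bar{D})\>$ has $a$ factors of $\<\b, \b\>$ and $b$ factors of $\<\a, \a\>$, giving $(-1)^a h^{r(D)}$. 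The off-diagonal entries $\<\ca(D), \cb(\bar{D})\>$ and $\<\cb(D), \ca(\bar{D})\>$ vanish because in every tensor factor the two inputs carry opposite colors, contributing a factor of $\<\a, \b\> = 0$ or $\<\b, \a\> = 0$.

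The main obstacle is the identification in the first paragraph: confirming that the abstract pairing really restricts to the Frobenius form on the orientation-preserving summand, with the bijection of Seifert circles preserving colors. This is a careful but routine check, tracing the construction of $T$ in \Cref{prop:C_ht-basic}(3) through the sign conventions of the Khovanov cube and verifying compatibility with \Cref{algo:ab-coloring}. Once this compatibility is granted, the displayed matrix drops out immediately from the tensor-factor computation above.
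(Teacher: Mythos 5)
Your proposal is correct and follows essentially the same route as the paper: the paper likewise reduces to the single-circle computation (writing $T(\bar{\a}) = 1^* + (h/2)X^*$, $T(\bar{\b}) = 1^* - (h/2)X^*$, hence $\<\a,\bar{\a}\> = h$, $\<\b,\bar{\b}\> = -h$, off-diagonals zero) and then multiplies over the tensor factors using the color patterns of $\ca$ and $\cb$. Your extra discussion of the state/circle identification and the sign of $T$ only makes explicit what the paper leaves implicit.
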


\begin{proof}
    From 
    \begin{gather*}
        \a = X + (h/2)1,\quad \b = X - (h/2)1 \\
        T(\bar{\a}) = 1^* + (h/2)X^*,\quad T(\bar{\b}) = 1^* - (h/2)X^*
    \end{gather*}
    we have:
    \begin{align*}
        \<
            \begin{pmatrix}
                \a \\ \b
            \end{pmatrix}
            ,\ 
            \begin{pmatrix}
                \bar{\a} & \bar{\b}
            \end{pmatrix}
        \>
		=
		\begin{pmatrix}
		    c & 0 \\
		    0 & -c
		\end{pmatrix}
    \end{align*}
    The result follows from the definition of the $\ca$-cycles.
\end{proof}

\begin{proposition}[Mirror formula] \label{prop:k-mirror}
    \[
        k_h(D) + k_h(\bar{D}) = r(D) - 1.
    \]
\end{proposition}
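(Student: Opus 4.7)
The plan is to combine the basis description from \Cref{prop:H(D)-basis} with the pairing computation of \Cref{lem:cacb-pairing}, and extract the formula by comparing determinants of two representations of the same pairing matrix. Set $k = k_h(D)$ and $\bar{k} = k_h(\bar{D})$, and let $[\cz] \in H(D; R)_f,\ [\bar{\cz}] \in H(\bar{D}; R)_f$ be the classes supplied by \Cref{prop:H(D)-basis}, so that $\{[\cz], X[\cz]\}$ and $\{[\bar{\cz}], X[\bar{\cz}]\}$ are bases of the respective free parts.

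Expanding the formulas for $[\ca(D)], [\cb(D)]$ of \Cref{prop:H(D)-basis} in the basis $\{[\cz], X[\cz]\}$ yields a change-of-basis matrix
\[
    P = \begin{pmatrix} h^{k+1}/2 & h^k \\ (-1)^{k+1}h^{k+1}/2 & (-1)^k h^k \end{pmatrix},
\]
whose determinant is $(-1)^k h^{2k+1}$; analogously one obtains $\bar P$ for $\bar D$ with $\det \bar P = (-1)^{\bar k} h^{2\bar k+1}$. On the other hand, \Cref{lem:cacb-pairing} says the pairing matrix $N$ of $\{[\ca(D)], [\cb(D)]\}$ against $\{[\ca(\bar D)], [\cb(\bar D)]\}$ is diagonal with entries $\pm h^{r(D)}$, so $\det N = (-1)^{r(D)} h^{2r(D)}$ (using $a+b=r(D)$).

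Now write $M$ for the pairing matrix in the $[\cz]$-bases; bilinearity gives $N = P\, M\, \bar P^{T}$, hence
\[
    (-1)^{r(D)}\, h^{2r(D)} \;=\; (-1)^{k+\bar k}\, h^{2(k+\bar k)+2}\, \det M.
\]
Unimodularity of the descended pairing on $H(D; R)_f \otimes H(\bar D; R)_f$ (stated just before \Cref{lem:cacb-pairing}) forces $\det M \in R^{\times} = F^{\times}$, so comparing $h$-degrees gives $2r(D) = 2(k+\bar k)+2$, i.e.\ $k_h(D) + k_h(\bar D) = r(D) - 1$. The only delicate point is the unimodularity of the induced pairing on the free parts (so that $\det M$ carries no hidden $h$-factor); once that is granted the argument is a one-line degree count.
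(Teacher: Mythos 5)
Your proposal is correct and follows essentially the same route as the paper: expressing $[\ca],[\cb]$ in the $\{[\cz],X[\cz]\}$-bases via \Cref{prop:H(D)-basis}, combining with \Cref{lem:cacb-pairing}, and comparing determinants using unimodularity of the descended pairing (which the paper establishes just before \Cref{lem:cacb-pairing}, since $R$ is a PID). The only cosmetic difference is that the paper factors $h^{k+k'}$ out of the change-of-basis matrices rather than carrying it inside $P$ and $\bar P$.
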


\begin{proof}
    With the description of \Cref{prop:H(D)-basis}, 
    \begin{gather*}
        \<
            \begin{pmatrix}
                \ca \\ \cb
            \end{pmatrix}
            ,\ 
            \begin{pmatrix}
                \bar{\ca} & \bar{\cb}
            \end{pmatrix}
        \> \\
        =
        h^{k + k'}
        \begin{pmatrix}
            h/2 & 1 \\
            \mp h/2 & \pm 1
        \end{pmatrix}
        \<
            \begin{pmatrix}
                \cz \\ X\cz
            \end{pmatrix}
            ,\ 
            \begin{pmatrix}
                \bar{\cz}, X\bar{\cz}
            \end{pmatrix}
        \>
        \begin{pmatrix}
            h/2 & \mp h/2 \\
            1 & \pm 1
        \end{pmatrix}
    \end{gather*}
    Since the pairing is unimodular, the middle matrix on the right hand side must have unital determinant. Together with \Cref{lem:cacb-pairing}, by comparing the determinants on both sides we have
    \[
        2r(D) = 2(k + k') + 2.
    \]
\end{proof}

\begin{corollary} \label{cor:k_h-neg}
    For a negative knot diagram $D$,
    \[
        k_h(D) = r(D) - 1.
    \]
    \qed
\end{corollary}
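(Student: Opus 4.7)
The plan is to combine the mirror formula (Proposition \ref{prop:k-mirror}) with the positivity vanishing result (Proposition \ref{prop:k_posD}). The key observation is that if $D$ is a negative diagram (every crossing is negative), then its mirror $\bar{D}$ is a positive diagram, since mirroring reverses the sign of every crossing while preserving the orientation.

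First I would verify the auxiliary fact that $r(D) = r(\bar{D})$. The Seifert circles are obtained by applying the orientation-preserving resolution at every crossing, and this resolution depends only on the orientations of the two outgoing strands, not on the over/under information. Mirroring swaps over and under but leaves the orientation untouched, so the oriented resolution at each crossing is the same for $D$ and $\bar{D}$, and hence the number of Seifert circles is unchanged.

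Next I would apply Proposition \ref{prop:k_posD} to $\bar{D}$: since $\bar{D}$ is positive, $k_h(\bar{D}) = 0$. Substituting into the mirror formula
\[
    k_h(D) + k_h(\bar{D}) = r(D) - 1
\]
gives $k_h(D) = r(D) - 1$ at once. I do not anticipate any obstacles; all the work has already been done in Propositions \ref{prop:k_posD} and \ref{prop:k-mirror}, and the corollary is just the specialization of the mirror formula to the case where one side vanishes.
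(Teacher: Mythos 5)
Your argument is correct and is precisely the intended derivation: the paper leaves the corollary unproved because it follows immediately from Proposition \ref{prop:k-mirror} together with $k_h(\bar{D}) = 0$ from Proposition \ref{prop:k_posD}, since the mirror of a negative diagram is positive. Your extra check that $r(D) = r(\bar{D})$ is harmless but not even needed, as the mirror formula is already stated in terms of $r(D)$.
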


\begin{proposition} \label{prop:k-conn-sum}
    Let $D, D'$ be knot diagrams.
    \[
        k_h(D \# D') = k_h(D) + k_h(D'). 
    \]
\end{proposition}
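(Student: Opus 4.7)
The plan is to combine the weak inequality from \Cref{prop:k-conn-sum-weak} (together with \Cref{prop:k-disj-union}) with the mirror formula \Cref{prop:k-mirror} in a sandwich argument. First, note that since $R = F[h]$ is a PID and $h$ is prime in $R$, \Cref{prop:k-disj-union} gives the equality $k_h(D \sqcup D') = k_h(D) + k_h(D')$. Combining this with \Cref{prop:k-conn-sum-weak}, we obtain the one-sided bound
\[
    k_h(D \# D') \leq k_h(D) + k_h(D').
\]
Applying the same bound to the mirror $\overline{D \# D'} = \bar{D} \# \bar{D'}$ yields
\[
    k_h(\bar D \# \bar{D'}) \leq k_h(\bar D) + k_h(\bar{D'}).
\]

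Next I would add these two inequalities and apply the mirror formula to each pair. By \Cref{prop:k-mirror}, the right hand side becomes
\[
    k_h(D) + k_h(\bar D) + k_h(D') + k_h(\bar{D'}) = (r(D) - 1) + (r(D') - 1).
\]
For the left hand side, I would invoke $\overline{D \# D'} = \bar{D} \# \bar{D'}$ together with the Seifert-circle count for a connected sum, $r(D \# D') = r(D) + r(D') - 1$ (the two Seifert circles carrying the joined arcs merge into one, and mirroring preserves Seifert circles). Then \Cref{prop:k-mirror} applied to $D \# D'$ gives
\[
    k_h(D \# D') + k_h(\bar D \# \bar{D'}) = r(D) + r(D') - 2,
\]
which matches the right hand side exactly.

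Hence equality must hold in both of the one-sided bounds above, and in particular $k_h(D \# D') = k_h(D) + k_h(D')$, as desired. The main conceptual point is that the mirror formula forces the two weak inequalities to be simultaneously sharp; the only subtlety to verify carefully is the Seifert-circle identity $r(D \# D') = r(D) + r(D') - 1$ and the compatibility of mirroring with the connected sum, both of which are routine combinatorial observations about the Seifert algorithm.
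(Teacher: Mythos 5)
Your proposal is correct and uses essentially the same argument as the paper: the upper bound from \Cref{prop:k-disj-union} and \Cref{prop:k-conn-sum-weak}, combined with the mirror formula \Cref{prop:k-mirror} and the count $r(D \# D') = r(D) + r(D') - 1$ to force equality. The paper simply chains the inequalities through $k_h(\overline{D \# D'})$ rather than summing the two bounds, which is only a cosmetic rearrangement of the same sandwich.
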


\begin{proof}
    Since $R$ is a PID, from \Cref{prop:k-disj-union}, \ref{prop:k-conn-sum-weak}, we have
    \[
        k_h(D \# D') \leq k_h(D) + k_h(D').
    \]
    With \Cref{prop:k-mirror} we have 
    \begin{align*}
        k_h(D \# D') 
            &= -k_h(\bar{D \# D'}) + r(\bar{D \# D'}) - 1 \\
            &\geq -(k_h(\bar{D}) + k_h(\bar{D'})) + (r(D) + r(D') - 1) - 1 \\
            &= k_h(D) + k_h(D').
    \end{align*}
\end{proof}

From \Cref{prop:k-mirror}, \ref{prop:k-conn-sum} we obtain:

\begin{proposition} \label{thm:sc-hom}
    $\s_h$ defines a homomorphism from the concordance group of knots in $S^3$ to $2\ZZ$.
    \qed
\end{proposition}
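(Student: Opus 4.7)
The plan is to assemble the statement from three already-established ingredients: the concordance invariance of $\s_c$ (Theorem \ref{thm:s_c-properties}(1)), the additivity of $k_h$ under connected sum (Proposition \ref{prop:k-conn-sum}), and the parity of $\s_c$. Concretely, to define a group homomorphism out of the knot concordance group, I need three things: that $\s_h$ descends to concordance classes, that it is additive under $\#$, and that its image lies in $2\ZZ$.

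First I would note that concordance invariance is already Theorem \ref{thm:s_c-properties}(1), so $\s_h$ is well-defined on the concordance group, and $\s_h(\text{unknot}) = 0$ from Proposition \ref{prop:s-properties}(1). The image lying in $2\ZZ$ is immediate from the parity statement $\s_c(L) \equiv |L| - 1 \pmod 2$ specialized to knots.

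The main computation is additivity. Given knot diagrams $D$ of $K$ and $D'$ of $K'$, form $D \# D'$; the standard behavior of the invariants of a connected sum gives $w(D \# D') = w(D) + w(D')$ and $r(D \# D') = r(D) + r(D') - 1$ (the Seifert algorithm merges one Seifert circle from each summand into a single circle at the join). Combining these with $k_h(D \# D') = k_h(D) + k_h(D')$ from Proposition \ref{prop:k-conn-sum}, a direct substitution into the definition $\s_h = 2k_h + w - r + 1$ yields
\[
    \s_h(K \# K') = \s_h(K) + \s_h(K'),
\]
since the two $+1$'s on the right match one $+1$ plus the extra $-(-1) = +1$ absorbed from the Seifert-circle count. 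This establishes additivity, and hence the homomorphism property; the inverse condition $\s_h(K) + \s_h(-\bar K) = 0$ follows automatically because $K \# (-\bar K)$ is slice and thus concordant to the unknot.

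Essentially there is no obstacle: the work has all been done in Proposition \ref{prop:k-conn-sum} (whose proof itself relied on the nontrivial mirror formula Proposition \ref{prop:k-mirror}), and the present statement is a formal consequence. The only thing worth writing down carefully is the bookkeeping for $w$, $r$, and the constant $+1$ under $\#$, which is what makes the two constants collapse into one, producing honest additivity rather than additivity up to a fixed shift.
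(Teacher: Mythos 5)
Your proposal is correct and follows essentially the same route as the paper, which deduces the statement from concordance invariance (Theorem~\ref{thm:s_c-properties}), additivity of $k_h$ under connected sum (Proposition~\ref{prop:k-conn-sum}), and the parity of $\s_c$; your bookkeeping of $w$, $r$, and the constant $+1$ under $\#$ is exactly the intended computation. The only cosmetic difference is that you obtain the inverse condition formally from additivity and sliceness of $K \# (-\bar K)$, whereas the paper points to the mirror formula (Proposition~\ref{prop:k-mirror}) directly -- but that formula already underlies Proposition~\ref{prop:k-conn-sum}, so the content is the same.
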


\begin{remark}
    The above arguments also hold for $(R, c) = (\ZZ, 2)$, with a little modification in the proof of \Cref{prop:H(D)-basis} using $H_c(D; \FF_p) \isom \FF_p^2$ for $p \geq 3$. 
\end{remark}

Finally we prove that $\s_h$ coincides with Rasmussen's $s$-invariant over $F$. The following definition of $s$ is given by Beliakova and Wehrli in \cite[Section 7.1]{beliakova2008categorification}.

\begin{definition} \label{def:ras-s}
	Let $F$ be a field of $\fchar{F} \neq 2$. Let $L$ be a link and $D$ be any diagram of $L$. The \textit{Rasmussen invariant} (over $F$) of a link $L$ is defined by:
    \begin{align*}
        s(L; F) = \frac{ \qdeg[\ca + \cb] + \qdeg[\ca - \cb] }{2}
    \end{align*}
	where $[\ca], [\cb]$ are the $\ca$-, $\cb$-classes of $D$ in $H_\Lee(D; F)$, and $\qdeg$ is the filtered q-degree of $H_\Lee(D; F)$.
\end{definition}

Using the fact that the q-degree of $[\ca + \cb]$ and $[\ca - \cb]$ differs by 2, and that $\qdeg[\ca] = \min\{ \qdeg[\ca + \cb], \qdeg[\ca - \cb]\}$, we can also write
\[
    s(L; F) = \qdeg[\ca] + 1.
\]

\begin{maintheorem} \label{thm:s-with-k}
    For any knot $K$, 
    \[
        s(K; F) = \s_h(K; F[h]).
    \]
\end{maintheorem}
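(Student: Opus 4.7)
The plan is to combine the explicit basis description of Proposition~3.21 with the graded-to-filtered comparison between Bar-Natan and Lee homology, which is made possible by the specialization $h \mapsto 2$ since $\fchar F \neq 2$.

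First I would compute graded q-degrees. Both colors $\a = X + h/2$ and $\b = X - h/2$ are homogeneous of q-degree $-2$ in $C(D; F[h])$ (as $\deg h = -2$), so the $\ca$-cycle has graded q-degree $w(D) - r(D)$. Setting $k = k_h(D)$ and $j_0 := w(D) - r(D) + 2k$, the identity $[\ca] = h^k(X[\cz] + (h/2)[\cz])$ from Proposition~3.21 forces the free generators $[\cz]$ and $X[\cz]$ of the graded $F[h]$-module $H(D; F[h])_f$ to be homogeneous of q-degrees $j_0 + 2$ and $j_0$, respectively. An easy expansion using both formulas of Proposition~3.21 then gives, regardless of the parity of $k$,
\[
    \{\,[\ca+\cb],\ [\ca-\cb]\,\} \;=\; \{\,2h^k X[\cz],\ h^{k+1}[\cz]\,\}.
\]

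Next I would pass to $H_\Lee(D; F)$ via the specialization $h \mapsto 2$. Since $C(D; F[h]) = C_{0,(h/2)^2}(D; F[h])$ and $C_\Lee(D; F) = C_{0,1}(D; F)$, this realises Corollary~2.16 and Proposition~2.15 concretely, producing a chain-level isomorphism $C(D; F[h]) \otimes_{F[h]} F \xrightarrow{\sim} C_\Lee(D; F)$ that sends $\ca$-cycles to $\ca$-cycles. Writing $\gamma, \delta$ for the images of $[\cz], X[\cz]$, we obtain an $F$-basis of $H_\Lee(D; F) \isom F^2$ with $\{[\ca+\cb], [\ca-\cb]\} = \{2^{k+1}\delta, 2^{k+1}\gamma\}$. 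Since $H(D; F[h])_f$ is freely generated as a graded $F[h]$-module by $[\cz]$ and $X[\cz]$ with no homogeneous elements of q-degree exceeding $j_0+2$, the filtration on $H_\Lee(D; F)$ induced by the specialization satisfies $F^{j_0+4}H_\Lee = 0$, $F^{j_0+2}H_\Lee = F\gamma$, and $F^{j_0}H_\Lee = H_\Lee$. This forces $\qdeg\gamma = j_0 + 2$ and $\qdeg\delta = j_0$, whence
\[
    s(K;F) \;=\; \frac{\qdeg[\ca+\cb] + \qdeg[\ca-\cb]}{2} \;=\; \frac{j_0 + (j_0+2)}{2} \;=\; j_0 + 1 \;=\; \s_h(K;F[h]).
\]

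The main obstacle is verifying that the filtration on $H_\Lee(D; F)$ induced from the grading on $H(D; F[h])_f$ through specialization actually coincides with the natural filtration on $H_\Lee(D; F)$ coming from q-degrees of cycles in $C_\Lee(D; F)$. The direction graded $\Rightarrow$ filtered is immediate, since a graded cycle of q-degree $\geq j$ in $C(D; F[h])$ specializes to a cycle in $C_\Lee$ all of whose monomials have q-degree $\geq j$. The reverse direction requires lifting an arbitrary high-q-degree cycle in $C_\Lee$ back to a cycle in $C(D; F[h])$, where one must exploit the freeness of $H(D; F[h])_f$ (equivalently, the degeneration of the Bar-Natan-to-Lee spectral sequence, which is essentially what Proposition~3.21 provides) to conclude the two filtrations agree.
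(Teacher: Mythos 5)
Your degree bookkeeping and your use of Proposition~\ref{prop:H(D)-basis} are correct: $[\cz]$ and $X[\cz]$ are homogeneous of bigraded q-degrees $j_0+2$ and $j_0$, and $\{[\ca+\cb],[\ca-\cb]\}=\{2h^kX[\cz],\ h^{k+1}[\cz]\}$ for either parity of $k$. But the argument is not complete: the step you yourself flag as ``the main obstacle'' --- that the filtration on $H_\Lee(D;F)$ induced through the specialization $h\mapsto 2$ coincides with the natural filtration by q-degrees of cycle representatives --- is exactly the hard half of the theorem, and your sketch (``exploit the freeness of $H(D;F[h])_f$ / degeneration of the spectral sequence'') does not establish it. What your proposal actually proves is the one-sided comparison: a homogeneous cycle of q-degree $\geq j$ specializes to a Lee cycle of filtered q-degree $\geq j$, giving $\qdeg\gamma\geq j_0+2$, $\qdeg\delta\geq j_0$, hence $s(K;F)\geq \s_h(K;F[h])$. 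The needed upper bounds ($\delta\notin F^{j_0+2}H_\Lee$, $\gamma\notin F^{j_0+4}H_\Lee$) require lifting an arbitrary Lee cycle of high filtered degree to a homogeneous cycle over $F[h]$ of at least that degree; a priori such a lift may be obstructed by, or only exist modulo, $h$-torsion classes of $H(D;F[h])$, about which Proposition~\ref{prop:H(D)-basis} says nothing. This comparison lemma is true, but it needs its own proof (e.g.\ via the structure of the graded $F[h]$-module $H(D;F[h])$ together with the vanishing of the torsion contribution under $h\mapsto 2$), and as written your proof has a genuine gap there.

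Note that the paper sidesteps this issue entirely: its proof establishes only the inequality $s(K;F)\geq \s_h(K;F[h])$ --- precisely your easy direction, obtained from $\qdeg_h[\ca_h']=2k+w(D)-r(D)$ and the fact that $\pi_*$ is q-degree non-decreasing --- and then upgrades it to an equality by applying the same inequality to the mirror $\bar{K}$, using that both invariants change sign under mirroring; for $\s_h$ this is the mirror formula $k_h(D)+k_h(\bar{D})=r(D)-1$ of Proposition~\ref{prop:k-mirror}. The cheapest repair of your argument is this mirror trick; otherwise you must supply the filtration-comparison lemma in full.
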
 

\begin{proof}
    Since both $s$ and $\s_h$ changes sign by mirroring the knot, it suffices to prove the inequality
    \[
        s(K; F) \geq \s_h(K; F[h]).
    \]

    Denote $C(D; F) = C_{0, 1}(D; F)$ and $C_h(D; F[h]) = C_{0, (h/2)^2}(D; F[h])$. Let $\pi: C_h(D; F[h]) \rightarrow C(D; F)$ be the chain map induced from $h \mapsto 2$. Let $\ca, \ca_h$ be the $\ca$-cycles of $D$ in $C(D; F)$, $C_h(D; F[h])$ respectively. Then $\pi(\ca_h) = \ca$. Let $[\ca_h] = h^k[\ca_h']$ with maximal $k$. 
    
    We denote the bigraded q-degree of $H_h(D; F[h])$ by $\qdeg_h$. First, $\ca_h$ is homogeneous with $\qdeg_h[\ca_h] = w(D) - r(D)$. From $\deg(h) = -2$ we have $\qdeg_h[\ca_h'] = 2k + w(D) - r(D)$. Since $\pi_*: H_h(D; F[h]) \rightarrow H(D; F)$ is q-degree non decreasing, we have
    \begin{align*}
        s(K; F) 
            &= \qdeg[\ca] + 1 \\
	        &= \qdeg(\pi_*[\ca_h]) + 1 \\
	        &= \qdeg(\pi_*[\ca_h']) + 1 \\
        	&\geq \qdeg_h[\ca_h'] + 1 \\
            &= \s_h(K; F[h]).
    \end{align*}
\end{proof}

\begin{remark}
    There is a well known lower bound for $s$ (\cite[Lemma 1.3]{Shumakovitch:2007})
    \[
        s(K) \geq w(D) - r(D) + 1,
    \]
    so we see that $2k_h(D)$ gives the correction term of the inequality.
\end{remark}

\begin{remark}
    A similar inequality
    \[
        s(L; F) \geq 2c_F(D) + w(D) - r(D) +1,
    \]
    is given by Collari in \cite{Collari:2017bennequin}, where $c_F(D)$ is a transverse link invariant called the \textit{c-invariant}. $c_F(D)$ is defined by the $h$-divisibility of $[\ca(D)]$ in Bar-Natan homology without discarding the torsions (see \Cref{rem:k-definition} and \Cref{sec:further}).
\end{remark}

Finally, we relate our results to some other alternative definitions of $s$. The following one is given by Khovanov in \cite{khovanov2004} using the bigraded version of Lee homology.

\begin{corollary} \label{cor:H(D; F[h])-module-struct}
    Let $t$ be a formal variable of degree $-4$. $H_{0, t}(K; F[t])_f$ admits a free $F[X]$-module structure, and there is a bigrading preserving isomorphism
    \[
        H_{0, t}(K; F[t])_f \isom (F[X])[0, s(K; F) + 1].
    \]
\end{corollary}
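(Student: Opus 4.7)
The plan is to translate the statement into the framework of \Cref{subsec:coincidence-with-s} by identifying $F[t]$ (with $\deg t = -4$) with the subring $R_0 = F[h^2] \subset R = F[h]$ via the graded ring isomorphism $t \leftrightarrow (h/2)^2$. Under \Cref{cor:H-normal-form}, this gives a bigrading-preserving isomorphism $H_{0, t}(K; F[t]) \cong H(K; R_0)$, so the problem reduces to exhibiting the $F[X]$-module structure on $H(K; R_0)_f$ and identifying it with a shifted copy of $F[X]$.

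First, I would invoke \Cref{prop:H(D)-basis} to produce a class $[\cz] \in H(K; R_0)_f$ such that $\{[\cz], X[\cz]\}$ is an $R_0$-basis. The key observation is that the endomorphism $X$ introduced via $\pm X_p$ restricts to $C(D; R_0)$: since $X^2 = (h/2)^2 = t$ lies in $R_0$ in the Frobenius algebra $A_{0, (h/2)^2}$, multiplication by $X$ at a marked point sends $R_0$-coefficients to $R_0$-coefficients. This upgrades $H(K; R_0)_f$ to an $F[X]$-module in which the subring $F[X^2] \subset F[X]$ acts via the $R_0$-structure under the identification $t = X^2$. Consequently the cyclic map $F[X] \to H(K; R_0)_f$ sending $P(X) \mapsto P(X)\cdot [\cz]$ satisfies $X^{2n}\mapsto t^n[\cz]$ and $X^{2n+1}\mapsto t^n X[\cz]$, which are precisely an $F$-basis of $R_0[\cz]\oplus R_0 X[\cz] = H(K; R_0)_f$. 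Hence this map is an $F[X]$-module isomorphism, proving $H(K; R_0)_f$ is free of rank one over $F[X]$.

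For the bigrading shift, I would verify that $[\cz]$ has homological degree $0$ (inherited from $[\ca]$, since $h$ and $X$ are hdeg-preserving), and compute its q-degree from the relation $[\ca] = h^k(X[\cz] + (h/2)[\cz])$ of \Cref{prop:H(D)-basis}. Since $\qdeg[\ca] = w(D) - r(D)$ and $\qdeg(h) = \qdeg(X) = -2$, this forces
\[
    \qdeg[\cz] = w(D) - r(D) + 2k + 2 = \s_h(K; F[h]) + 1 = s(K; F) + 1,
\]
where the last equality uses \Cref{thm:s-with-k}. So the generator $1 \in F[X]$ corresponds to an element of bidegree $(0, s(K; F) + 1)$, giving the desired isomorphism $H_{0,t}(K; F[t])_f \cong (F[X])[0, s(K;F)+1]$. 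There is no substantive obstacle; the only care needed is in matching the grading conventions through the identification $F[t] \cong F[h^2]$ and in verifying that the $X$-action genuinely descends to $R_0$-coefficients, which is immediate once one notes $X^2 = t$.
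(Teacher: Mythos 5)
Your proposal is correct and follows essentially the same route as the paper: identify $F[t]$ with the subring $R_0=F[h^2]\subset F[h]$ (the paper equivalently takes $(R,c)=(F[\sqrt{t}],2\sqrt{t})$), invoke \Cref{prop:H(D)-basis} for the $R_0$-basis $\{[\cz],X[\cz]\}$, use $X^2=t$ to get the free rank-one $F[X]$-structure, and compute $\qdeg[\cz]=\s_h(K)+1=s(K;F)+1$ via \Cref{thm:s-with-k}. Your extra checks (that the $X$-action preserves $R_0$-coefficients and that $[\cz]$ sits in homological degree $0$) are details the paper leaves implicit, not a different argument.
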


\begin{proof}
    If we consider $(R, c) = (F[\sqrt{t}], 2\sqrt{t})$, the subring $R_0$ is $F[t]$ and $H_{0, (c/2)^2}(D; R_0) = H_{0, t}(D; F[t])$. From \Cref{prop:H(D)-basis}, $H_{0, t}(D; F[t])$ is freely generated by $\{ [\cz], X[\cz] \}$ over $F[t]$. The endomorphism $X$ gives $H_{0, t}(D; F[t])$ an $F[X]$-module structure. With $X^2 = (c/2)^2 = t$ we see that it is freely generated by $[\cz]$ over $F[X]$. From \Cref{prop:H(D)-basis} we have
    \[
        \qdeg_h{[\cz]} = \qdeg_h{[\ca_h]} + 2(k + 1) = \s_h(K; F[h]) + 1.
    \]
\end{proof}

The following one is given by Kronheimer and Mrowka in \cite[Section 2.2]{Kronheimer:2011by} based on the above definition of $s$.

\begin{corollary}
    Let $K$ be a knot. Take any connected cobordism $S$ from the unknot $U$ to $K$. Let $D$ be a diagram of $K$ and
    \[
        \phi: A \rightarrow H_h(D; F[h])_f
    \] 
    be the homomorphism obtained from $S$. With $1, X \in A$, define
    \begin{align*}
        m^+ &= k_h( \phi(1) ), \\
        m^- &= k_h( \phi(X) ).
    \end{align*}
    Then
    \[
        s(K; F) = m_+ + m_- + \chi(S).
    \]
\end{corollary}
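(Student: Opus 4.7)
The plan is to track $\phi$ on the canonical elements $\a,\b \in A = H_h(U; F[h])$ via \Cref{prop:k-cobordism}, and then recover $\phi(1)$ and $\phi(X)$ by $F[h]$-linearity.

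First, since $S$ is connected, every component of $S$ meets $U$, so \Cref{prop:k-cobordism} applies. Using $r(U)=1$ and $w(U)=0$, it gives
\[
    \phi[\ca(U)] \;=\; \pm\, h^{l}\, [\ca(D)], \qquad l \;=\; \tfrac{1}{2}\bigl(w(D) - r(D) + 1 - \chi(S)\bigr).
\]
The identical argument, applied to the alternative orientation $-o$ on $U$ (which, by connectedness of $S$, extends uniquely to a permissible orientation on $S$ inducing $-o'$ on $K$) and using that $r$ and $w$ are orientation-insensitive on a single-component link, yields
\[
    \phi[\cb(U)] \;=\; \pm\, h^{l}\, [\cb(D)]
\]
with the same exponent $l$.

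Second, by \Cref{prop:H(D)-basis} we may write $[\ca(D)] = h^{k}\, \a \cdot [\cz]$ and $[\cb(D)] = (-h)^{k}\, \b \cdot [\cz]$, where $k = k_h(D)$ and the $A$-module structure on $H_h(D; F[h])_f$ is given by the endomorphism $X$ of \Cref{lem:X_p}. Combining with the previous step,
\[
    \phi(\a) \;=\; \tau_\ca\, h^{l+k}\, \a \cdot [\cz], \qquad \phi(\b) \;=\; \tau_\cb\, h^{l+k}\, \b \cdot [\cz]
\]
for some signs $\tau_\ca, \tau_\cb \in \{\pm 1\}$ (with $\tau_\cb$ absorbing the factor $(-1)^k$).

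Third, the identities $1 = (\a - \b)/h$ and $X = (\a + \b)/2$ in $A$, together with the $F[h]$-linearity of $\phi$, give explicit expressions for $\phi(1)$ and $\phi(X)$ in the basis $\{[\cz], X[\cz]\}$. A short case analysis on whether $\tau_\ca = \tau_\cb$ or $\tau_\ca = -\tau_\cb$ (using that $2$ is invertible in $F$ since $\fchar F \neq 2$) shows in both cases that
\[
    m_+ + m_- \;=\; k_h(\phi(1)) + k_h(\phi(X)) \;=\; 2(l + k).
\]

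Finally, \Cref{thm:s-with-k} gives $s(K; F) = 2k + w(D) - r(D) + 1 = 2(l+k) + \chi(S)$, which combined with the previous identity yields $s(K; F) = m_+ + m_- + \chi(S)$.

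The main obstacle will be justifying the $\cb$-class half of the first step: \Cref{prop:k-cobordism} is formally stated only for the given orientation, so one must redo its proof starting from the alternative permissible orientation on $S$ that induces $-o$ on $U$ and $-o'$ on $K$, the key point being that $r$ and $w$ of a knot do not change under orientation reversal, so the exponent $l$ is unchanged.
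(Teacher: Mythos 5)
Your proposal is correct and follows essentially the same route as the paper: the paper's (very terse) proof likewise amounts to expressing $\phi(1)$ and $\phi(X)$ in the basis $\{[\cz], X[\cz]\}$ of \Cref{prop:H(D)-basis}, which yields exactly your sign dichotomy ($m_+ = m_-$ or $m_+ + 1 = m_- - 1$, both cases summing to $\s_h(K) - \chi(S)$), and then invokes \Cref{thm:s-with-k}. The orientation issue you flag for the $\cb$-class is harmless and can be dispatched without redoing the proof of \Cref{prop:k-cobordism}: apply it to the reversed cobordism from $-U$ to $-K$, using that $C(-D) = C(D)$, $\cb(D) = \ca(-D)$, and that $r$ and $w$ are unchanged under total orientation reversal, so the same exponent $l$ appears.
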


\begin{proof}
    Since $[\cz(U)] = 1$ and $\s_h(U) = 0$, we either have
    \[
        m_+ = m_- = \frac{\s_h(K) - \chi(S)}{2},
    \]
    or
    \[
        m_+ + 1 = m_- - 1 = \frac{\s_h(K) - \chi(S)}{2}.
    \]
\end{proof}

We end this section with the following questions.

\begin{question}
    Can we extend \Cref{thm:s-with-k} to links?
\end{question}

\begin{question}\label{quest:s_c-eq-s}
    Does $\s_c$ coincide with $s$ for any $(R, c)$?
\end{question}

It is a famous open question whether there exists any $F$ such that $s(-; F)$ is distinct from $s = s(-; \QQ)$ (\cite[Question 6.1]{lipshitz2014refinement}). \Cref{mainthm3} implies that if \Cref{quest:s_c-eq-s} is solved affirmatively, then all $s(-; F)$ are equal among fields $F$ of $\fchar{F} \neq 2$.

\begin{remark} \label{rem:s-F2}
    In \cite{lipshitz2014refinement}, an alternative definition of the $s$-invariant for knots over a field $F$ (including $\fchar{F} = 2$) is given based on the filtered Bar-Natan homology:
    \[
        s'(K; F) = \frac{q_\mathit{min} + q_\mathit{max} }{2}
    \]
    where
    \begin{align*}
        q_\mathit{min} &= \min\{ \qdeg{x} \mid x \in H_{f\BN}(D; F) \setminus 0 \}, \\
        q_\mathit{max} &= \max\{ \qdeg{x} \mid x \in H_{f\BN}(D; F) \setminus 0 \}.
    \end{align*}
    \Cref{prop:ht-relation} implies that this definition coincides with \Cref{def:ras-s} when $\fchar{F} \neq 2$. For $F = \FF_2$, Seed showed by direct computation that $K = K14n19265$ has $s(K; \QQ) = 0$ but $s'(K; \FF_2) = -2$ (see \cite[Remark 6.1]{lipshitz2014refinement}).
\end{remark}

    \section{Further remarks and questions} \label{sec:further}

\bulsubsection{Implication from the Jones' conjecture}
    $k_c(D)$ can be related with the Jones' conjecture, a classical conjecture in knot theory which is now resolved affirmatively. It was proposed by Jones in \cite{Knots:fw}, reformulated by Male{\v s}i{\v c} and Traczyk in \cite{Malesic:wm} and by Kawamuro in \cite{kawamuro2006_1}, \cite{kawamuro2006_2}, proved by Dynnikov and Prasolov in \cite{Dynnikov:ui} and independently by LaFountain and Menasco in \cite{LaFountain:uf}.
    
    \begin{theorem}[Jones Conjecture \cite{Knots:fw}] \label{thm:jones-conj}
        If $D_0$ is a diagram of an oriented link $L$ having the minimum number of Seifert circles $r_0$ among all diagrams of $L$, then
        \begin{enumerate}
            \item $w(D_0)$ is uniquely determined.
            \item For any diagram $D$ of L with $r_0 + m$ Seifert circles, $w(D)$ is bounded as:
                $$ w(D_0) - m \leq w(D) \leq w(D_0) + m. $$
        \end{enumerate}
    \end{theorem}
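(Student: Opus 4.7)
The plan is to treat this statement as a classical theorem quoted directly from the literature rather than something to be reproved here. The author's aim in citing \Cref{thm:jones-conj} is to apply it to the divisibility $k_c(D)$, not to give a new proof; full proofs appear in Dynnikov--Prasolov \cite{Dynnikov:ui} via rectangular-diagram theory and in LaFountain--Menasco \cite{LaFountain:uf} via braid foliations. So my proposal is essentially: cite these references and move on to the application. The purpose of this subsection, as set up by the opening sentence, is to derive consequences for $k_c$, not to reprove the classical statement.

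If I were to sketch how one attacks the statement from scratch, the classical route is to translate into braid-theoretic language. By the Yamada--Vogel theorem, the minimum number of Seifert circles over all diagrams of $L$ coincides with the braid index of $L$, and any minimum-Seifert-circle diagram $D_0$ may be isotoped to a closed braid on $r_0$ strands. Under this translation, $w(D_0)$ becomes the exponent sum of the braid word; part (1) becomes the assertion that the exponent sum is an invariant of minimum-index braid representatives, and part (2) becomes a controlled bound as strands are added. One then proceeds by Markov-theoretic arguments: any two closed-braid representatives of $L$ are related by conjugations and (de)stabilizations, and the key input is a ``Markov theorem without stabilization''-type result restricting which moves can appear in a sequence between minimum-index representatives.

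The hard part, and the reason the conjecture stood open for so long, is to rule out sequences of moves in which the strand count fluctuates while the exponent sum drifts by more than the strand count allows. HOMFLY-polynomial bounds (the Morton--Franks--Williams inequality) and the Khovanov-theoretic identity $w(D) = r(D) - 1 + \s_c(L) - 2k_c(D)$ derived in \Cref{sec:sc-def} both yield only partial information: the latter reduces part (1) to the equality $k_c(D_0) = k_c(D_0')$ for any two minimum-Seifert-circle diagrams, and part (2) to a sharp bound on the variation of $k_c$ under change of diagram, neither of which is apparent from the techniques developed in this paper. Accordingly I would not attempt an $\s_c$-based reproof; the natural next step, which I expect the author to take immediately after this theorem, is to combine \Cref{thm:jones-conj} with the identity above to extract intrinsic bounds on $k_c(D)$ in terms of the invariant $\s_c(L)$ and the excess number of Seifert circles.
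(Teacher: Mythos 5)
Your proposal matches the paper exactly: the theorem is quoted as a known result with no proof given, attributed to Dynnikov--Prasolov and LaFountain--Menasco, and is immediately combined with the identity $\s_c(L) = 2k_c(D) + w(D) - r(D) + 1$ to bound $k_c(D)$ in terms of the excess number of Seifert circles, which is precisely the application you anticipate. Nothing further is needed.
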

    
    Combining this result with the definition of $\s_c$, we obtain:
    
    \begin{proposition}
        With the assumption of \Cref{thm:jones-conj}:
        \begin{enumerate}
            \item $k_c(D_0)$ is uniquely determined.
            \item For any diagram $D$ of L with $r_0 + m$ Seifert circles, $k_c(D)$ is bounded as:
                $$ k_c(D_0) \leq k_c(D) \leq k_c(D_0) + m. $$
        \end{enumerate}
    \end{proposition}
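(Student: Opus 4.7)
The plan is to deduce everything directly from the defining identity
\[
    2k_c(D) = \s_c(L) - w(D) + r(D) - 1,
\]
which follows from rearranging the formula for $\s_c$. Since $\s_c(L)$ depends only on $L$, the value of $k_c(D)$ for a diagram $D$ of $L$ is determined by $w(D)$ and $r(D)$ alone.

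For part (1), I would apply this identity to $D_0$: the quantities $r(D_0) = r_0$ and $\s_c(L)$ are both invariants of $L$, and Jones' conjecture (\Cref{thm:jones-conj}(1)) gives that $w(D_0)$ is uniquely determined as well. Hence $k_c(D_0) = (\s_c(L) + r_0 - w(D_0) - 1)/2$ depends only on $L$.

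For part (2), I would substitute $r(D) = r_0 + m$ into the same identity to obtain
\[
    2k_c(D) = \s_c(L) - w(D) + r_0 + m - 1,
\]
and then use the bound $w(D_0) - m \leq w(D) \leq w(D_0) + m$ from \Cref{thm:jones-conj}(2). Flipping signs and adding $\s_c(L) + r_0 + m - 1$ throughout gives
\[
    \s_c(L) - w(D_0) + r_0 - 1 \ \leq\ 2k_c(D)\ \leq\ \s_c(L) - w(D_0) + r_0 + 2m - 1,
\]
and since the left and right endpoints are exactly $2k_c(D_0)$ and $2k_c(D_0) + 2m$ by part (1), dividing by $2$ yields the claimed inequality.

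There is no real obstacle here; the proposition is essentially a translation of Jones' conjecture through the invariance of $\s_c$. The only thing worth checking is that the parities work out so that both sides of the final inequality are integers, but this is automatic since $k_c(D)$, $k_c(D_0)$, and $m$ are all integers by construction.
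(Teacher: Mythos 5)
Your proof is correct and is essentially the paper's own argument: the paper derives the proposition simply by rewriting the invariance of $\s_c(L) = 2k_c(D) + w(D) - r(D) + 1$ as $2k_c(D) = \s_c(L) - w(D) + r(D) - 1$ and feeding in the Jones conjecture's determination of $w(D_0)$ and its bound $w(D_0) - m \leq w(D) \leq w(D_0) + m$. Your closing remark about parity is unnecessary (dividing the chain of inequalities by $2$ needs no integrality check), but it does not affect correctness.
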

    
    Thus $k_c$ takes the minimum value whenever $r$ is minimum.

\bulsubsection{Transverse link invariants} 

    A \textit{transverse link} is a link in $\RR^3$ that is everywhere transverse to the standard contact structure $\ker(dz - y dx)$. From a work of Bennequin \cite{Bennequin:eK9ZWAXw}, given a braid representation $B$ of a transverse link $T$, the \textit{self-linking number} of $T$ is given by
    \[
        \mathit{sl}(T) = -b(B) + e(B)
    \]
    where $b(B)$ is the number of strings of $B$ and $e(B)$ is the exponent sum. Denoting by $D$ the closure of $B$, we have $b(B) = r(D)$ and $e(B) = w(D)$. From \Cref{prop:cacb-variance-under-rho}, after redefining $\rho$ with $\epsilon\rho$, we obtain
    \begin{proposition} \label{prop:lee-class-trans-inv}
        $[\ca(D)] \in H_c(D; R)$ is an invariant of $T$.
    \end{proposition}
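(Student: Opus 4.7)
The plan is to invoke the transverse Markov theorem (due to Orevkov--Shevchishin and Wrinkle), which states that two braid representatives of the same transverse link $T$ differ by a finite sequence of (i) braid conjugations and (ii) positive Markov stabilizations and destabilizations. On the closures, (i) is realized by a finite sequence of Reidemeister II and III moves, while (ii) corresponds to a single positive Reidemeister I move (adding or removing a positive kink on one strand).

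For each such move, \Cref{cor:rw-2j} gives the exponent of $c$ in the variance formula of \Cref{prop:cacb-variance-under-rho} as $j = (\Delta r - \Delta w)/2$. Since the self-linking number $\mathit{sl}(T) = w(D) - r(D)$ is a transverse invariant by Bennequin's formula recalled above, the total change $\Delta(w - r)$ along any Markov sequence vanishes, and hence the total exponent $\sum_i j_i$ equals $0$. Concretely, a positive Markov stabilization adds one positive crossing and one Seifert circle, giving $\Delta w = \Delta r = 1$ and $j = 0$ at that single step; for a conjugation, the end-point values of $w$ and $r$ coincide, so the $j_i$'s over the intermediate Reidemeister II and III moves sum to zero even when individual terms are nonzero.

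Using the sign absorption $\tilde\rho_i := \epsilon_i \rho_i$, \Cref{prop:cacb-variance-under-rho} reads $[\ca(D_{i+1})] = c^{j_i}\, \tilde\rho_i [\ca(D_i)]$ at each step. Composing along the sequence and using that each $\tilde\rho_i$ is $R$-linear, we would obtain
\[
    [\ca(D')] = c^{\sum_i j_i}\, (\tilde\rho_n \circ \cdots \circ \tilde\rho_1) [\ca(D)] = (\tilde\rho_n \circ \cdots \circ \tilde\rho_1) [\ca(D)],
\]
so $[\ca(D)]$ is carried to $[\ca(D')]$ under the composite isomorphism, proving transverse invariance.

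The main obstacle is the interpretation of the relation $[\ca(D_{i+1})] = c^{j_i}\, \tilde\rho_i [\ca(D_i)]$ when individual $j_i$'s are negative, which can occur among the Reidemeister II and III moves realizing a braid conjugation; this is resolved by passing to the free part $H_c(\cdot\,; R)_f$, on which $c$-multiplication is injective by \Cref{cor:H(D)-tor}, so that the composition is unambiguous and only the net exponent $\sum_i j_i = 0$ matters in the end. A secondary, more routine point is independence of the composite $\tilde\rho$ from the chosen Markov sequence, which is inherited from the well-definedness of the isomorphism type of $H_c(\cdot\,; R)$ under link isotopy established through \Cref{thm:t-inv}.
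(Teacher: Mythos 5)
Your starting point (the transverse Markov theorem plus the exponent formula of \Cref{cor:rw-2j}) is the right one, and your computation for positive stabilization ($\Delta w = \Delta r = 1$, hence $j=0$, the RM1$_L$ row of Table~\ref{table:RM-k-corresp}) is correct. The gap is in how you treat conjugation: you only control the \emph{sum} of the exponents, and to handle possibly negative individual $j_i$ you retreat to the free part $H_c(\,\cdot\,;R)_f$. That proves a strictly weaker statement than the proposition, which asserts invariance of $[\ca(D)]$ in $H_c(D;R)$ itself. The difference is not cosmetic: the applications this proposition feeds are exactly the ones your version cannot reach. At $c=0$ the class is Plamenevskaya's $\psi(T)$ in Khovanov homology, and there your fix collapses --- injectivity of multiplication by $c$ on the free part and the appeal to \Cref{cor:H(D)-tor} both require $c\neq 0$; and the invariant $\tilde{k}_c$ introduced right after the proposition measures $c$-divisibility in $H_c(D;R)$, deliberately \emph{without} discarding torsion, so invariance in the free part does not suffice.

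The missing idea is that every transverse Markov move can be realized by Reidemeister moves through closed braid diagrams, i.e.\ with all strands coherently oriented at every intermediate stage: sliding the conjugating word around the closure axis is planar isotopy, the cancellation $\omega^{-1}\omega$ uses RM2 moves between same-direction strands, braid relations use RM3 moves with all three strands coherent, and a positive stabilization is a single RM1 adding a positive kink. For each such move $\Delta r = \Delta w$ (namely $0$ for the coherent RM2/RM3 moves and $1$ for the positive kink), so by \Cref{cor:rw-2j} every individual exponent is $j_i=0$; indeed in the coherent RM2/RM3 cases the appendix gives $\ca' = \rho(\ca)$ on the nose. Hence no power of $c$ --- in particular no negative one --- ever appears, no passage to the free part is needed, and after absorbing the sign $\epsilon$ into $\rho$ one gets $[\ca(D')] = \rho[\ca(D)]$ in $H_c(D';R)$ for every $c$, including $c=0$. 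Your ``secondary point'' about independence of the composite $\rho$ from the chosen Markov sequence is not needed for the statement as formulated (invariance means the classes correspond under some isomorphism associated to the moves), and in any case \Cref{thm:t-inv} only gives invariance of the isomorphism class, not naturality of the map, so it could not deliver that independence anyway.
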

    
    The special case with $c = 0$ (Khovanov homology) gives Plamenevskaya's invariant $\psi(T)$ (\cite{Plamenevskaya:2006dd}). We know from \Cref{prop:cacb-variance-under-rho} that the effect of a negative twist is multiplication by $c = 0$, so we see that a transverse stabilization annihilates $\psi(T)$ (\cite[Theorem 3]{Plamenevskaya:2006dd}). The special case $c = 1$ (filtered Bar-Natan homology) is given by Lipshitz in \cite{Lipshitz:2013kp}, and for the general case is given by Collari in \cite{Collari:2017wr}. 
    
    Numerical transverse link invariants can be extracted from $[\ca(D)]$. We already have $k_c(D)$, and also we can define
    \[
        \tilde{k}_c(D; R) = k_c([\ca(D)])\ \text{where } [\ca(D)] \in H_c(D; R).
    \]
    Note that $\tilde{k}_c$ measures the $c$-divisibility in $H_c(D; R)$, whereas $k_c$ measures in the free part $H_c(D; R)_f$. Both are non-negative transverse link invariants, and obviously $\tilde{k}_c \leq k_c$. The special case of $\tilde{k}_c(T)$ for $(R, c) = (F[h], h)$ is given by Collari in \cite{Collari:2017wr} where it is called the \textit{c-invariant} of a transverse link. 
    
    \medskip
    
    \begin{proposition}
        \[ 
            \s_c(T) + 2e^-(T) - 1
                \ \leq\ 
                \mathit{sl}(T) 
                \ \leq\ 
                \s_c(T) - 1.  
        \]
        where 
        \[
            e^-(T) = \max_B \{\ e^-(B)\ \} \ \leq \ 0
        \]
        is the maximum negative exponent sum among all braids representing $T$.
    \end{proposition}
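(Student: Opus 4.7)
The key observation is that $\s_c(T) - 1 = 2k_c(D) + w(D) - r(D) = 2k_c(D) + \mathit{sl}(T)$ for any braid $B$ representing $T$ with closure $D$, by the Bennequin formula $\mathit{sl}(T) = -b(B) + e(B) = w(D) - r(D)$ and the definition of $\s_c$. Hence the two inequalities are equivalent to bounds on $k_c(D)$, both of which are already available in the paper.

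For the upper bound $\mathit{sl}(T) \leq \s_c(T) - 1$, I would rearrange the identity above to get $\s_c(T) - 1 - \mathit{sl}(T) = 2 k_c(D)$ and invoke $k_c(D) \geq 0$ from \Cref{cor:k_c-bound}. This is immediate and uses only one diagram of $T$.

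For the lower bound, I would again use the identity $\mathit{sl}(T) = \s_c(T) - 1 - 2k_c(D)$, but now select $D$ to be the closure of a braid $B$ representing $T$ and apply the upper bound $k_c(D) \leq n^-(D)$ from \Cref{cor:k_c-bound}. Since $D$ is a braid closure, $n^-(D) = n^-(B) = -e^-(B)$, giving
\begin{equation*}
    \mathit{sl}(T) \;=\; \s_c(T) - 1 - 2k_c(D) \;\geq\; \s_c(T) - 1 + 2e^-(B).
\end{equation*}
Since the left side is a transverse invariant independent of the chosen braid representation, I would then take the supremum of the right side over all braid representations $B$ of $T$, which by definition equals $\s_c(T) - 1 + 2e^-(T)$. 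This yields the desired bound.

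No step looks genuinely hard: both directions reduce to the two halves of \Cref{cor:k_c-bound} once the Bennequin identity is invoked. The only subtlety to get right is that $k_c$ is already a transverse invariant (\Cref{prop:lee-class-trans-inv}), so evaluating $k_c(D)$ on the closure of any braid representative of $T$ gives the same number; this is what allows the maximum over $B$ in the lower-bound step to pass through cleanly to $e^-(T)$.
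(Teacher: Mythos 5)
Your proof is correct and is essentially the paper's argument: the paper also just combines the identity $\mathit{sl}(T) = \s_c(T) - 2k_c(D) - 1$ (from the Bennequin formula for a braid closure $D$) with the two bounds $0 \leq k_c(D) \leq n^-(D)$ of \Cref{cor:k_c-bound}, maximizing over braid representatives for the lower bound. The only cosmetic remark: invoking \Cref{prop:lee-class-trans-inv} is not actually needed to pass to $e^-(T)$, since the inequality holds for every braid representative $B$ and neither $\mathit{sl}(T)$ nor $\s_c(T)$ depends on $B$.
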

    \begin{proof}
        Obvious from $\mathit{sl}(T) = \s_c(T) - 2k_c(T) - 1$ and \Cref{cor:k_c-bound}.
    \end{proof}

\bulsubsection{Quasi-positive links / knots}

    \Cref{prop:k_posD}, \ref{prop:s-pos-link} and \Cref{thm:s_c-properties} can be generalized to \textit{quasi-positive links}. A braid $B$ is \textit{quasi-positive} if it is of the form
    \[
        B = \prod_k \omega_k \sigma_{i_k} \omega_k^{-1}
    \]
    where each $\sigma_{i_k}$ is one of the positive generators and each $\omega_k$ is a word in the braid group. A diagram $D$ is \textit{quasi-positive} if it is the closure of a quasi-positive braid.
    
    \begin{lemma}
        If $D$ is quasi-positive, then $k_c(D) = 0$.
    \end{lemma}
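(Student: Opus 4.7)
The plan is to propagate the positive-case argument of \Cref{prop:k_posD} through the conjugating letters by repeatedly applying the one-crossing inequality \Cref{prop:cross-rm-bound}. Write $B = \prod_{k=1}^{n} \omega_k \sigma_{i_k} \omega_k^{-1}$ on $b$ strands, so that $D = \hat{B}$ satisfies $r(D) = b$ and $w(D) = n$.

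First I would remove each distinguished generator $\sigma_{i_k}$, a positive crossing of $D$, by its orientation preserving smoothing. Since removing a positive crossing can only weakly raise $k_c$ by \Cref{prop:cross-rm-bound}, iterating produces a chain
\[
    k_c(D) \leq k_c(D_1) \leq \cdots \leq k_c(D_n).
\]
The terminal diagram $D_n$ is the closure of the braid word $\omega_1 \omega_1^{-1} \cdots \omega_n \omega_n^{-1}$, which is trivial in the braid group, so $D_n$ is a diagram of the $b$-component unlink $U_b$.

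Next I would pin down $k_c(D_n)$ using the invariance of $\s_c$. From the crossingless diagram of $U_b$ we read off $\s_c(U_b) = 1 - b$. The diagram $D_n$ is still a $b$-strand braid closure, so $r(D_n) = b$, and its writhe vanishes because the exponents of each $\omega_k$ and $\omega_k^{-1}$ cancel. Substituting into the defining formula $\s_c = 2k_c + w - r + 1$ applied to $D_n$ forces $k_c(D_n) = 0$, and combined with $0 \leq k_c(D) \leq k_c(D_n) = 0$ this gives $k_c(D) = 0$.

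The argument presents no serious obstacle; the only care required is to confirm that each $\sigma_{i_k}$ is genuinely a positive crossing of $D$ (so that the correct direction of \Cref{prop:cross-rm-bound} applies) and that each intermediate diagram remains a braid closure, which locks its Seifert-circle count at $b$ throughout. It is worth remarking that the cobordism bound $\s_c(L) \geq \s_c(U_b) + \chi(S) = 1 - b - n$ obtained directly from \Cref{prop:hat-s-cobordism} applied to the obvious quasi-positive surface is too lossy to conclude; \Cref{prop:cross-rm-bound} is the sharper tool here because it tracks $k_c$ asymmetrically under the 1-handle attachment at a positive crossing.
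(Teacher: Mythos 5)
Your proof is correct and follows essentially the same route as the paper: delete the distinguished positive crossings $\sigma_{i_k}$ via \Cref{prop:cross-rm-bound} to get $k_c(D) \leq k_c(D_n)$ with $D_n$ the closure of $\prod_k \omega_k\omega_k^{-1}$, then show $k_c(D_n)=0$. The only (cosmetic) difference is the last step, where the paper notes that the cancellations $\omega_k\omega_k^{-1}$ preserve $k_c$ directly, while you invoke the invariance of $\s_c$ together with $r(D_n)=b$, $w(D_n)=0$ — both rest on the same relation $\Delta k_c = (\Delta r - \Delta w)/2$.
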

    
    \begin{proof}
        Take $B$ as above, and let 
        \[
            B' = \prod_k \omega_k \omega_k^{-1}.
        \]
        Let $D'$ be closure of $B'$. From \Cref{prop:cross-rm-bound}, we have $0 \leq k_c(D) \leq k_c(D')$. Cancellations of $\omega_k \omega_k^{-1}$ preserve $k_c$, so $k_c(D') = k_c(\bigcirc^{\sqcup r}) = 0$.
    \end{proof}
    
    \begin{proposition}
        If $K$ is a quasi-positive knot, 
        \[
            \s_c(K) = 2g_*(K) = 2g(K).
        \]
        \qed
    \end{proposition}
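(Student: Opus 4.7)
The plan is to compute $\s_c(K)$ directly using the preceding lemma, match it against the slice genus via Rudolph's ribbon surface construction, and then close the gap between $g_*$ and $g$ by invoking Rudolph's classical theorem for quasi-positive knots.

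Fix a quasi-positive braid representation $B = \prod_{k=1}^{n} \omega_k \sigma_{i_k} \omega_k^{-1}$ of $K$ on $b$ strands, and let $D$ be its closure. Since $D$ is a braid closure we have $r(D) = b$, and since each pair $\omega_k$, $\omega_k^{-1}$ contributes $0$ to the exponent sum we have $w(D) = n$. The preceding lemma gives $k_c(D) = 0$, hence
\[
    \s_c(K) = 2k_c(D) + w(D) - r(D) + 1 = n - b + 1.
\]

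For the geometric bound, Rudolph's construction attaches $n$ positively half-twisted bands to $b$ parallel disks pushed into $B^4$, producing a ribbon surface $\Sigma \subset B^4$ with $\partial \Sigma = K$ and $\chi(\Sigma) = b - n$, so that $g_*(K) \leq (n - b + 1)/2$. Combining this with \Cref{thm:s_c-properties}(2) gives
\[
    n - b + 1 = \s_c(K) \leq 2g_*(K) \leq n - b + 1,
\]
forcing $\s_c(K) = 2g_*(K) = n - b + 1$. Finally, Rudolph's theorem asserts $g(K) = g_*(K)$ for quasi-positive knots, yielding the full chain $\s_c(K) = 2g_*(K) = 2g(K)$.

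The principal obstacle is the last equality $g_*(K) = g(K)$: the $\s_c$-framework, by design, only provides a lower bound on $g_*$, so the coincidence with the Seifert genus cannot be extracted internally and must be imported from Rudolph's external result on quasi-positive knots. The first two equalities, by contrast, follow cleanly from the $k_c(D) = 0$ lemma together with the standard ribbon surface bound.
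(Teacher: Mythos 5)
Your computation of the first equality is sound and is essentially the argument the paper intends: the preceding lemma gives $k_c(D)=0$ for the closure $D$ of the quasipositive braid, so $\s_c(K)=2k_c(D)+w(D)-r(D)+1=n-b+1$; Rudolph's braided ribbon surface ($b$ disks joined by $n$ positive bands, pushed into $B^4$) is connected with Euler characteristic $b-n$, giving $2g_*(K)\leq n-b+1$; and \Cref{thm:s_c-properties}(2) gives the reverse inequality, so $\s_c(K)=2g_*(K)$.

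The genuine gap is your final step. There is no theorem of Rudolph asserting $g(K)=g_*(K)$ for quasipositive knots; that equality is the Bennequin--Rudolph statement for \emph{strongly} quasipositive knots, where the braided surface is an honest Seifert surface in $S^3$. For a quasipositive braid in the paper's sense, $\prod_k \omega_k\sigma_{i_k}\omega_k^{-1}$ with arbitrary conjugating words, the associated surface lives only in $B^4$, and the Seifert genus can strictly exceed the slice genus. Concretely, the closure of the quasipositive $3$-braid $(\sigma_1^3\sigma_2\sigma_1^{-3})\sigma_2$ is the knot $8_{20}$ (up to mirror image): it is ribbon, so $g_*=0$ and indeed $\s_c=0+2-3+1=0$, yet its Alexander polynomial is $(t^2-t+1)^2$ up to units, which forces $g\geq 2$. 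So the equality $2g_*(K)=2g(K)$ cannot be imported for general quasipositive knots --- it actually fails --- and your argument (correctly) establishes only $\s_c(K)=2g_*(K)$. This also shows that the ``$=2g(K)$'' clause of the proposition as printed needs the stronger hypothesis of strong quasipositivity (as in the positive case, \Cref{thm:s_c-properties}(3), where the Seifert-algorithm surface realizes the genus); the paper omits the proof, and the positive-knot argument does not carry over to the Seifert genus here, so your honest flag at the end identifies precisely the point where the statement, not just the proof, breaks down.
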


\bulsubsection{The canonical generator $[\cz(K)]$}

Rasmussen called the $\ca$-classes in $\QQ$-Lee theory the ``canonical generators" of $H_\Lee(L; \QQ)$, from the fact that they form a basis of $H_\Lee(D; \QQ)$ and that they are invariant (up to unit) under the Reidemeister moves. For a general $(R, c)$ we have seen that this does not hold (\Cref{prop:ab-gen}, \ref{prop:cacb-variance-under-rho}). In \Cref{subsec:coincidence-with-s} we considered $(R, c) = (F[h], h)$, and constructed the class $[\cz]$ in the proof of  \Cref{prop:H(D)-basis}. We claim that is reasonable to call $[\cz]$ the canonical generator of $H_{0, (h/2)^2}(K; R)_f$. If we redefine the isomorphism $\rho$ of \Cref{prop:cacb-variance-under-rho} with $\epsilon \rho$, we see that the induced map
\[
    \rho: H(D)_f \rightarrow H(D')_f
\]
is an $A$-module isomorphism that maps $[\cz(D)]$ to $[\cz(D')]$. Thus by regarding $[\cz(D)]$ as an element of $H(K)_f$ we have

\begin{proposition} \label{thm:H_c(K)_f canon-generator}
    $H_{0, (h/2)^2}(K; F[h])_f$ is generated by $[\cz(K)]$ over $A$.
\end{proposition}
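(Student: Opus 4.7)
The plan is to split the statement into two pieces: (a) for each fixed diagram $D$, the element $[\cz(D)]$ generates $H(D; F[h])_f$ as an $A$-module, and (b) under the Reidemeister move isomorphisms, $[\cz(D)]$ is carried to $[\cz(D')]$, so descends to a well-defined class $[\cz(K)] \in H(K)_f$. Part (a) is essentially already in hand: by \Cref{prop:H(D)-basis}, $\{[\cz(D)],\ X[\cz(D)]\}$ is an $F[h]$-basis of $H(D; F[h])_f$, and in $A = F[h][X]/(X^2 - (h/2)^2)$ the element $X^2$ acts as the scalar $(h/2)^2 \in F[h]$. Consequently the $A$-submodule generated by $[\cz(D)]$ equals the $F[h]$-span of $\{[\cz(D)],\ X[\cz(D)]\}$, which is the whole space.

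For part (b), take two diagrams $D, D'$ related by a single Reidemeister move and rescale the isomorphism $\rho$ of \Cref{prop:cacb-variance-under-rho} by $\epsilon^{-1}$ so that $[\ca(D')] = h^j \rho[\ca(D)]$. Using $\epsilon\epsilon' = (-1)^j$, one then has $[\cb(D')] = (-1)^j h^j \rho[\cb(D)]$. Since $\rho$ is an isomorphism between torsion-free $F[h]$-modules, it preserves $h$-divisibility, so $k_h(D') = k_h(D) + j$. The defining relations of \Cref{prop:H(D)-basis}, subtracted, give
\[
    [\ca(D)] - (-1)^k [\cb(D)] = h^{k+1} [\cz(D)]
\]
in $H(D; F[h])_f$. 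Apply $\rho$ (using $A$-linearity, to be addressed below), substitute the transformation rules for $[\ca]$ and $[\cb]$, and compare with the analogous identity for $D'$; torsion-freeness of $H(D'; F[h])_f$ over $F[h]$ lets one cancel the power of $h$ and conclude $\rho[\cz(D)] = [\cz(D')]$, separately in each case $j \in \{0,\pm 1\}$. Concatenating along a Reidemeister sequence then makes $[\cz(K)]$ a well-defined element of $H(K)_f$, and together with part (a) this yields the proposition.

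The main obstacle is verifying that the rescaled $\rho$ is $A$-linear with respect to the endomorphism $X$ built from \Cref{lem:X_p endo}. The idea is to choose the marked point $p$ used to define $X$ to lie outside the local region in which the Reidemeister move operates; then the explicit chain maps of \Cref{sec:canon-classes-and-RMs} act as the identity on the tensor factor corresponding to the arc containing $p$, and so commute strictly with $X_p$ at the chain level. This descends to commutation with the sign-corrected endomorphism $X$ on homology. The routine but nontrivial work is to check this compatibility against the concrete RM1 (left and right), RM2, and RM3 chain maps together with their sign conventions; conceptually, however, nothing beyond the locality of these maps and the homotopy invariance of $X$ is needed.
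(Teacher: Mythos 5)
Your proposal is correct and follows essentially the same route as the paper: \Cref{prop:H(D)-basis} gives generation over $A$ diagram-by-diagram, and the isomorphism $\rho$ of \Cref{prop:cacb-variance-under-rho}, rescaled by $\epsilon$, carries $[\cz(D)]$ to $[\cz(D')]$, which is exactly what the paper asserts before stating the proposition. You in fact supply details the paper leaves implicit (the cancellation argument deriving $\rho[\cz(D)]=[\cz(D')]$ from the transformation rules for $[\ca],[\cb]$ together with torsion-freeness, and the locality argument for compatibility with the endomorphism $X$); note only that your cancellation step uses just $F[h]$-linearity of $\rho$, with $A$-linearity needed solely to make the $A$-module structure on $H(K)_f$ canonical.
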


Suppose $S$ is a cobordism between two knots $K, K'$. The corresponding homomorphism
\[
    \phi: H(D) \rightarrow H(D')
\]
can similarly be adjusted so that 
\[
    \phi[\ca] = c^l [\ca'] 
    \quad \text{where} \quad
    l = \frac{1}{2}(-\Delta r + \Delta w - \chi(S)).
\]
With this modification, we can prove that $\phi$ maps $[\zeta(D)]$ to 
\[
    (2X)^{\frac{\Delta \s - \chi(S)}{2}}[\zeta(D')]
\]
where $\Delta \s = \s_h(K') - \s_h(K)$. Since the result depends only on $K, K'$ and $S$, we obtain a well-defined map
\[
    \phi: H(K)_f \rightarrow H(K')_f.
\]
This map is also natural with respect to cobordisms, since both $\Delta \s$ and $\chi(S)$ are additive under compositions of cobordisms. Thus

\begin{proposition} \label{thm:cz-cobordism}
    $H_{0, (h/2)^2}(K; F[h])_f$ is a functor from the category of knots (with morphisms cobordisms between knots) to the category of $A$-modules.
\end{proposition}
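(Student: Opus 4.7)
The plan is to verify the functoriality axioms for the assignment sending a knot $K$ to the $A$-module $H(K)_f$ and a cobordism $S\colon K \to K'$ to the homomorphism $\phi_S$ described in the paragraph just before the proposition. The $A$-module structure on the object side is recorded in \Cref{thm:H_c(K)_f canon-generator}, so the remaining content splits into: (a)~well-definedness of $\phi_S$, (b)~$A$-linearity of $\phi_S$, (c)~$\phi_{\mathrm{id}} = \mathrm{id}$ for the trivial cylinder, and (d)~$\phi_{S_2 \circ S_1} = \phi_{S_2} \circ \phi_{S_1}$.

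The key intermediate result is the formula $\phi_S[\cz(D)] = (2X)^{(\Delta\s - \chi(S))/2}[\cz(D')]$ asserted before the proposition. Granted $A$-linearity (addressed below), I would derive it as follows. Starting from $\phi[\ca(D)] = c^l[\ca(D')]$ of \Cref{prop:k-cobordism} and $[\ca] = h^k(X + h/2)[\cz]$ from \Cref{prop:H(D)-basis}, one obtains $h^{k(D)}(X + h/2)\phi[\cz(D)] = h^{k(D') + l}(X + h/2)[\cz(D')]$; the analogous computation on $[\cb]$, using $[\cb] = (-h)^k(X - h/2)[\cz]$, gives a second constraint with $(X - h/2)$ and a sign $\epsilon$ controlled by how the cobordism map is normalized on $\cb$-cycles. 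Because $\{[\cz], X[\cz]\}$ is an $R$-basis of $H(D')_f$ and $2X$ is not a zero divisor in $A = R[X]/(X^2 - (h/2)^2)$, adding and subtracting the two constraints determines $\phi[\cz(D)]$ uniquely; a short case split on the parity of $\Delta k + l$ identifies the answer with $(2X)^{\Delta k + l}[\cz(D')]$, and $\Delta k + l = (\Delta\s - \chi(S))/2$ follows from $\Delta\s = 2\Delta k + \Delta w - \Delta r$ together with the formula for $l$ in \Cref{prop:k-cobordism}. With this formula in hand, (a) is immediate because the exponent involves only the invariants $\s_h(K),\s_h(K'),\chi(S)$; (c) is immediate because the trivial cylinder has $\chi = 0$ and $\Delta\s = 0$; and (d) follows from additivity of $\chi$ (the gluing locus $K'$ is a $1$-manifold, contributing $0$) and of $\Delta\s$ (by telescoping), together with the fact that $(2X) \in A$ commutes with the $A$-linear map $\phi_{S_1}$.

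The main obstacle is (b), the $A$-linearity of $\phi_S$. The endomorphism $X$ on $H(K)_f$ is built from the marked-point operator $X_p$, whose canonicity across a crossing is \Cref{lem:X_p endo}; its commutation with the chain maps induced by each elementary piece of $S$ must be checked separately. For each of RM1, RM2, RM3 and the 0-, 1-, 2-handle moves, I would place the marked point away from the local region and verify that the induced chain map intertwines $X_p$ up to the sign dictated by how the $\a/\b$-coloring of \Cref{algo:ab-coloring} transforms. For the Morse moves this reduces to identities expressing that multiplication by $X$ commutes with the Frobenius operations $m,\Delta,\epsilon$ on $A$ when the marked point is placed on a strand untouched by the handle; for the Reidemeister moves it follows from the naturality of the chain equivalences constructed in \Cref{sec:canon-classes-and-RMs} combined with \Cref{lem:X_p endo} for the few cases where the marked point must be transported across the local tangle. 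The delicate part throughout is consistent sign bookkeeping, both for the $\a/\b$-coloring and for the $\pm c^l$ appearing in the cobordism maps.
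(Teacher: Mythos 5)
Your proposal is correct and follows essentially the same route as the paper, whose own argument is exactly the sketch preceding the proposition: normalize the cobordism map so that $\phi[\ca] = c^l[\ca']$, establish $\phi[\cz(D)] = (2X)^{(\Delta \s - \chi(S))/2}[\cz(D')]$ via \Cref{prop:k-cobordism} and \Cref{prop:H(D)-basis}, and deduce well-definedness and functoriality from the fact that the exponent depends only on $K, K', S$ and that $\Delta\s$ and $\chi$ are additive under composition. You merely make explicit the $A$-linearity checks and the sign bookkeeping on the $\cb$-side that the paper leaves implicit.
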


In particular if $S$ is an annulus, then $[\zeta(K)]$ is mapped to $[\zeta(K')]$, so

\begin{proposition} \label{thm:cz-knot-concordance-inv}
    $[\zeta(K)]$ is a knot concordance invariant.
\end{proposition}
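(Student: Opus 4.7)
The plan is to read off the result directly from the preceding functoriality Proposition \ref{thm:cz-cobordism}, applied to the annulus realizing the concordance. Given concordant knots $K, K'$, choose a smoothly embedded annulus $S \subset \RR^3 \times [0,1]$ with $\partial S = (-K) \times \{0\} \cup K' \times \{1\}$. This is a connected cobordism, every component of which (there is only one) has boundary in both $K$ and $K'$, so the hypotheses of Proposition \ref{thm:cz-cobordism} are satisfied, yielding an $A$-module map $\phi : H_{0,(h/2)^2}(K; F[h])_f \to H_{0,(h/2)^2}(K'; F[h])_f$ with
\[
    \phi[\cz(K)] = (2X)^{(\Delta \s - \chi(S))/2}\,[\cz(K')].
\]

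Next I would compute the exponent. An annulus has $\chi(S) = 0$. By Theorem \ref{thm:s_c-properties}(1), $\s_c$ is a link concordance invariant for every admissible $c$, in particular for $c = h$ in the present setting, so $\s_h(K) = \s_h(K')$ and $\Delta \s = 0$. Thus the exponent is zero and $\phi[\cz(K)] = [\cz(K')]$.

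To promote this to the statement that $[\cz]$ is an honest concordance invariant, I would run the same argument for the reversed annulus $-S$ from $K'$ to $K$, obtaining $\psi: H(K')_f \to H(K)_f$ with $\psi[\cz(K')] = [\cz(K)]$. Since $H_{0,(h/2)^2}(K; F[h])_f$ and $H_{0,(h/2)^2}(K'; F[h])_f$ are each freely generated over $A$ by their respective $[\cz]$ classes (Proposition \ref{thm:H_c(K)_f canon-generator}), the $A$-module maps $\phi$ and $\psi$ are determined by their effect on the generator; they are therefore mutually inverse isomorphisms identifying $[\cz(K)]$ with $[\cz(K')]$.

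There is essentially no obstacle here beyond bookkeeping: all the real work has been absorbed into Proposition \ref{thm:cz-cobordism} (the cobordism formula for $\phi[\cz]$) and Theorem \ref{thm:s_c-properties} (concordance invariance of $\s_h$). The only point that deserves a line of care is the upgrade from ``$\phi$ sends the generator to the generator'' to ``$\phi$ is an isomorphism'', and this is handled by pairing $S$ with $-S$ and invoking Proposition \ref{thm:H_c(K)_f canon-generator}.
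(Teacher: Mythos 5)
Your proof is correct and follows essentially the same route as the paper: apply the cobordism formula of \Cref{thm:cz-cobordism} to the concordance annulus, note $\chi(S)=0$ and $\Delta\s=0$ by concordance invariance of $\s_h$, so the exponent vanishes and $\phi[\cz(K)]=[\cz(K')]$. Your extra closing step (using the reversed annulus and cyclicity from \Cref{thm:H_c(K)_f canon-generator} to see that the maps are mutually inverse) is a harmless refinement of the same argument, not a different approach.
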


By collapsing $h \mapsto 2$, we obtain the corresponding propositions for Lee theory over $F$. Also with $H_{0, (h/2)^2}(-; F[h]) \isom H_\BN(-; F)$, we obtain those for Bar-Natan theory over $F$. Same arguments also hold for $(R, c) = (\ZZ, 2)$, the integral Lee theory. So in these cases, we have the canonical generator (as an $A$-module) that is strictly invariant under Reidemeister moves, and also invariant under knot concordance.

\begin{question} 
    Can we find such a class for a general $(R, c)$?
\end{question}

\begin{question} 
    Is there any geometric explanation for the class $[\cz(K)]$? 
\end{question}

    \appendix
\section{Proof of \Cref{prop:cacb-variance-under-rho}}
\label{sec:canon-classes-and-RMs}

Let $R$ be a commutative ring with $h, t \in R$ and $c = \sqrt{h^2 + 4t}$. Let $D, D'$ be diagrams related by a Reidemeister move. There is a quasi-isomorphism corresponding to the move
\[
    \rho: C_{h, t}(D; R) \rightarrow C_{h, t}(D'; R).
\]
Denote by $\ca, \ca'$ the $\ca$-classes of $D, D'$, and by $\cb, \cb'$ the $\cb$-classes of $D, D'$ respectively. We prove
\[
    \ca' \sim \epsilon c^j \rho(\ca), \quad
	\cb' \sim \epsilon' c^j \rho(\cb)
\]
where $j \in \{ \pm 1 \}$ is given as in \Cref{table:RM-k-corresp}, and $\epsilon \epsilon' = (-1)^j$. We define $\rho$ by modifying the isomorphisms given in \cite[Section 5]{khovanov2000}. We omit the subscript $(h, t)$ and the ring $R$ in the remaining.


\bulsubsubsection{RM1$_L$ : Left twist}

\begin{figure}[ht]
    \centering
    \includegraphics[scale=0.3]{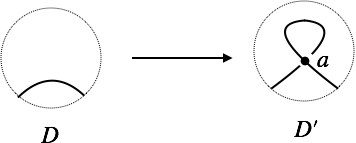}
	\caption{A left twist} \label{fig:RM1}
\end{figure}

Let $D'$ be the diagram obtained by performing a left-twist on an arc of $D$ (\Cref{fig:RM1}). Let $a$ be the added crossing of $D'$, and $D'_0, D'_1$ be the 0-, 1- resolved diagram of $D'$ at $a$ respectively . There is a decomposition of $\bar{C}(D')$ as in \Cref{fig:RM1_C}. 

\begin{figure}[ht]
	\centering
    \includegraphics[scale=0.3]{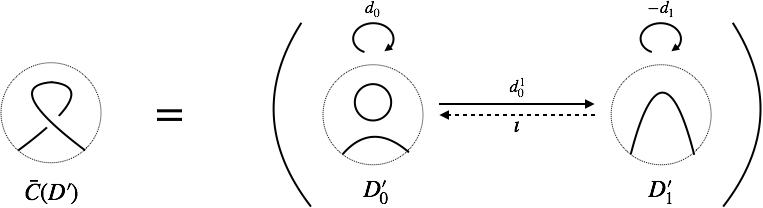}
	\caption{Decomposition of $\bar{C}(D')$} \label{fig:RM1_C}
\end{figure}

Define a bidegree $(-1, 0)$ chain map
\begin{align*}
	\iota : \bar{C}(D'_1)[1, 1] 
	    &\longrightarrow 
	    \bar{C}(D'_0) \isom \bar{C}(D'_1) \otimes A \\
    x
        &\longmapsto
        x \otimes 1.
\end{align*}

\noindent
and a bidegree preserving chain map
\[
	\gamma = \iota \circ d^1_0 : \bar{C}(D'_0) \rightarrow \bar{C}(D'_0).
\]

\noindent
Also define a subset of $\bar{C}(D')$ by
\[
    X_1 = \{ x - \gamma(x) \mid x \in \bar{C}(D'_0) \}.
\]
It can be shown that $X_1$ is a subcomplex of $\bar{C}(D')$ and that $\bar{C}(D')$ decomposes into the direct sum of $X_1$ and an acyclic subcomplex. There is a bigrading preserving isomorphism from $C(D)$ to $X_1$ given by
\begin{align*}
    \rho: C(D) &\longrightarrow X_1 \\
        X &\longmapsto X \otimes X - hX \otimes 1 - t1 \otimes 1 \\
        1 &\longmapsto 1 \otimes X - X \otimes 1.
\end{align*}

This induces a bigrading preserving isomorphism
\[
    \rho: H(D) \rightarrow H(D').
\]

\begin{figure}[ht]
    \centering
    \includegraphics[scale=0.35]{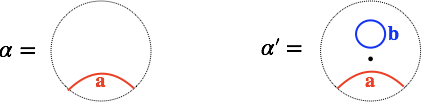}
    \caption{Colorings of $\ca$ and $\ca'$}
\end{figure}

Now, the Seifert circle of $D$ that intersects the interior of \dottedcircle is either colored $\a$ or $\b$. We may assume the first, since the other case is considered by $\cb$. Let $\ca = \cdots \otimes \a$ and $\ca' = \cdots \otimes \a \otimes \b$. Recall that $\a = X - u,\ \b = X - v$ and $h = u + v,\ t = -uv$. By direct calculation, $\rho$ maps
\begin{align*}
    \a &\mapsto \a \otimes \b \\
    \b &\mapsto \b \otimes \a,
\end{align*}
so $\ca' = \rho(\ca),\ \cb' = \rho(\cb)$.


\bulsubsubsection{RM1$_R$ : Right twist}

A right twist is accomplished by a composition of tangency move (RM2) and a left untwist (RM1$_L^{-1}$), so the result follows from those of the two moves.


\bulsubsubsection{RM2 : Tangency move} 

\begin{figure}[ht]
    \centering
    \includegraphics[scale=0.35]{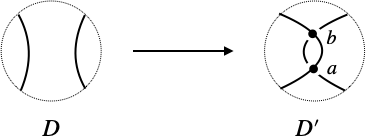}
	\caption{A tangency move} \label{fig:RM2}
\end{figure}

Let $D$, $D'$ be two diagrams as depicted in \Cref{fig:RM2}. We take a crossing-order of $D'$ so that $a, b$ are placed in this order at the end. There is a decomposition of $\bar{C}(D')$ as in \Cref{fig:RM2_C}. 
\begin{figure}[ht]
    \centering
    \includegraphics[scale=0.3]{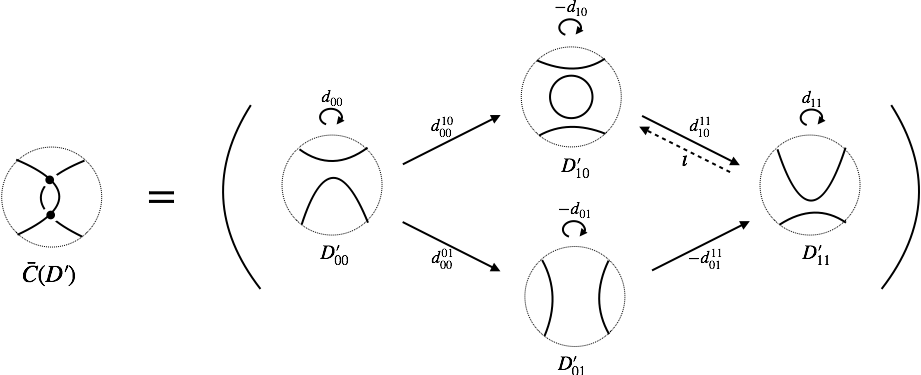}
	\caption{Decomposition of $\bar{C}(D')$} \label{fig:RM2_C}
\end{figure}
Define chain maps as in the previous case: 
\begin{align*}
	\iota : \bar{C}(D'_{11})[2, 2] &\longrightarrow \bar{C}(D'_{10})[1, 1] \\
	\gamma = \iota \circ d_{01}^{11} : \bar{C}(D'_{01})[1, 1] &\longrightarrow \bar{C}(D'_{10})[1, 1]
\end{align*}

Also define
\[
    X_1 = \{ x + \gamma(x) \mid x \in \bar{C}(D'_{01})[1, 1] \}.
\]

As in the previous case, we have $X_1 \htpy C(D')$ and the quasi-isomorphism $\rho$ is given by
\begin{align*}
    \rho : C(D) &\longrightarrow X_1 \\
              x &\longmapsto     x + \gamma(x)
\end{align*}

We divide cases by the direction of the two arcs of $D$ in \dottedcircle. Let $s, s'$ the orientation preserving states of $D, D'$ respectively.

\begin{case} [The two arcs points to the same direction]
	$\Delta r = 0$. Since $C(D) = C(D'_{01})$ and $\gamma(\ca) = \gamma(\cb) = 0$, we have $\ca' = \rho(\ca),\ \cb' = \rho(\cb)$.
\end{case}

\begin{case}[The two arcs points to the opposite direction]
	We split cases depending on whether the two arcs of $\ca$ seen in \dottedcircle belongs to the same $s$-circle or not.
    
    \begin{subcase} [The two arcs belong to the same $s$-circle]
		$\Delta r = 2$. 
	    \begin{figure}[H]
            \centering            \includegraphics[scale=0.35]{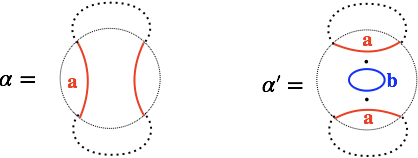}
        \end{figure}
		Take an element $x$ in $C(D'_{00})$ as in \Cref{fig:RM2_x}. 
        \begin{figure}[ht]
            \centering
            \includegraphics[scale=0.3]{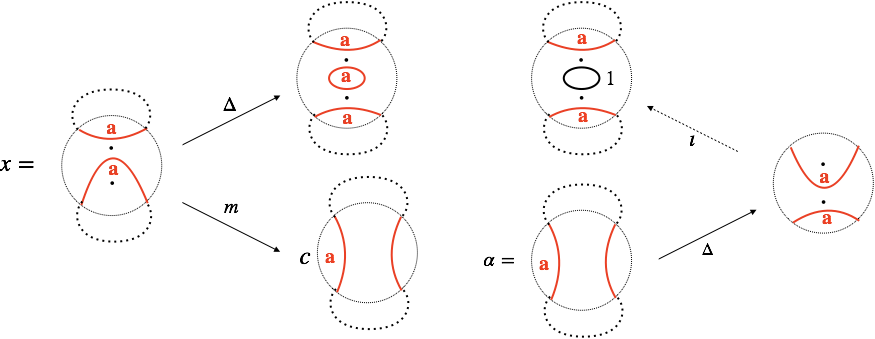}
            \caption{The element $x$ in $C(D)$}
            \label{fig:RM2_x}
        \end{figure}
		From $\a = \b + c$, we have
        \[
            dx = (\ca' + c\gamma(\ca)) + c\ca \ 
                \Rightarrow \ 
                \ca' \sim -c\rho(\ca).
        \]
        
        Let $\bar{x}$ be the chain obtained from $x$ by flipping $\a$'s and $\b$'s. Then
        \begin{align*}
            d\bar{x} = (\cb' - c \gamma(\cb)) - c \cb \ 
                &\Rightarrow \ 
                \cb' \sim c\rho(\cb)
        \end{align*}
    \end{subcase}

    \begin{subcase}[The two arcs belong to the different $s$-circles] 
        Similarly we obtain $\Delta r = 0$ and
        \[
            \ca' \sim -\rho(\ca), 
            \quad
            \cb' \sim -\rho(\cb)
        \]
    \end{subcase}
\end{case}


\bulsubsubsection{RM3 : Triple point move}

\begin{figure}[H]
    \centering
    \includegraphics[scale=0.35]{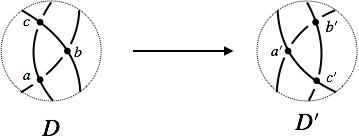}
	\caption{A triple point move} \label{fig:RM3}
\end{figure}

Let $D$, $D'$ be two diagrams as depicted in \Cref{fig:RM3}. Fix any crossing-order of $D$ so that $a, b, c$ are placed in this order at the end (and similarly for $D'$). The three crossings are taken so that $D_{**1}$ and $D'_{**1}$ are isotopic. 

\begin{figure}[ht]
    \centering
    \includegraphics[scale=0.35]{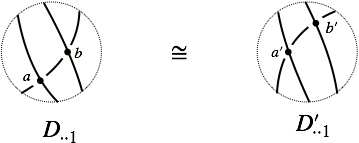}
\end{figure}

\begin{figure}[t]
    \centering
    \includegraphics[scale=0.35]{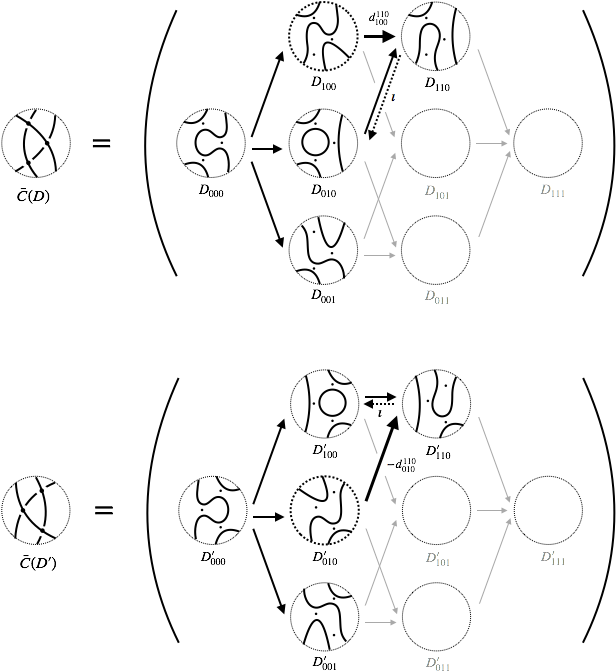}
	\caption{Decompositions of $\bar{C}(D)$ and $\bar{C}(D')$} \label{fig:RM3_C}
\end{figure}

There are decompositions of $\bar{C}(D), \bar{C}(D')$ as in \Cref{fig:RM3_C}. Define maps
\[
    \left\{ \begin{array}{ccc}
    	\iota : \bar{C}(D_{110})[2, 2] 
    	    &\longrightarrow 
    	    &\bar{C}(D_{010})[1, 1] \\
    	\gamma : \bar{C}(D_{100})[1, 1] 
    	    &\longrightarrow 
    	    &\bar{C}(D_{010})[1, 1]
    \end{array} \right.
\]

\[
    \left\{ \begin{array}{ccc}
    	\iota': \bar{C}(D'_{110})[2, 2] 
    	    &\longrightarrow 
    	    &\bar{C}(D_{100})[1, 1] \\
    	\gamma': \bar{C}(D'_{010})[1, 1] 
    	    &\longrightarrow 
    	    &\bar{C}(D_{100})[1, 1] \\
    \end{array} \right.
\]

\noindent
and subsets of $\bar{C}(D), \bar{C}(D')$ by
\begin{align*}
    X_1 &= \{ x + \gamma(x) + y \mid x \in \bar{C}(D_{100})[1, 1], y \in \bar{C}(D_{**1}) \} \\
    X'_1 &= \{ x + \gamma'(x) + y \mid x \in \bar{C}(D'_{010})[1, 1], y \in \bar{C}(D'_{**1}) \}. 
\end{align*} 

Again we have $X_1 \htpy C(D),\ X'_1 \htpy C(D')$, and the isomorphism $\rho$ is given by:
\begin{align*}
	\rho : X_1 &\longrightarrow X'_1 \\
           x + \gamma(x) + y &\longmapsto x + \gamma'(x) + y
\end{align*}

Since the move is point-symmetric in \dottedcircle, regarding directions of the strands, we may assume that the top-most strand of $D$ directs upward. Thus there are four possible cases for the directions of the other two strands. For the following three cases: $\uparrow\uparrow\uparrow$, $\uparrow\uparrow\downarrow$ and $\uparrow\downarrow\downarrow$, we see that $\Delta r = 0$, and from the definition of $\rho$ we have $\ca' = \rho(\ca), \cb' = \rho(\cb)$.
The remaining case is  $\uparrow\downarrow\uparrow$. There are five possible subcases regarding the connections of the arcs:

\begin{figure}[H]
    \centering
    \includegraphics[scale=0.35]{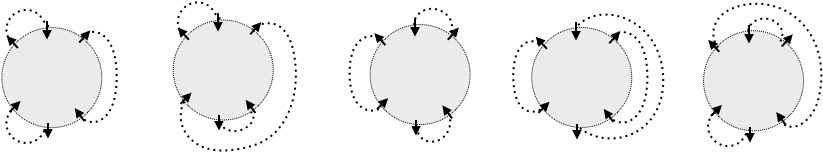}
\end{figure}

\setcounter{case}{0}
\begin{case}
    $\Delta r = -2$. Suppose $\ca, \ca'$ are colored as follows:
    \begin{figure}[H]
        \centering
        \includegraphics[scale=0.35]{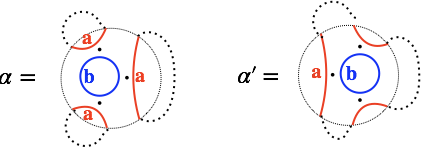}
    \end{figure}
    Define elements $x, y, z \in C(D)$ as in \Cref{fig:RM3_xyz}. 
    \begin{figure}[ht]
        \centering
        \includegraphics[scale=0.3]{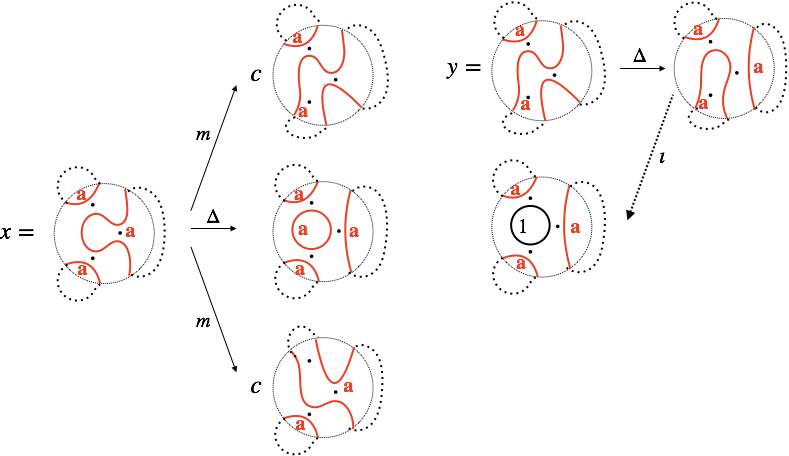}
        \caption{Elements $x, y, z \in C(D)$}
        \label{fig:RM3_xyz}
    \end{figure}

    Then
    \begin{align*}
        dx = cy + (\ca + c\gamma(y)) + cz 
            &\Rightarrow\ 
            \ca \sim -cy - c\gamma(y) - cz \\
        d\bar{x} = -c\bar{y} + (\cb - c\gamma(\bar{y})) - c\bar{z} 
            &\Rightarrow\ 
            \cb \sim c\bar{y} + c\gamma(y) + cz
    \end{align*}

	Similarly in $C(D')$, define chains $x', y', z'$ as in \Cref{fig:RM3_xyz_2}. 
	\begin{figure}[ht]
        \centering
        \includegraphics[scale=0.3]{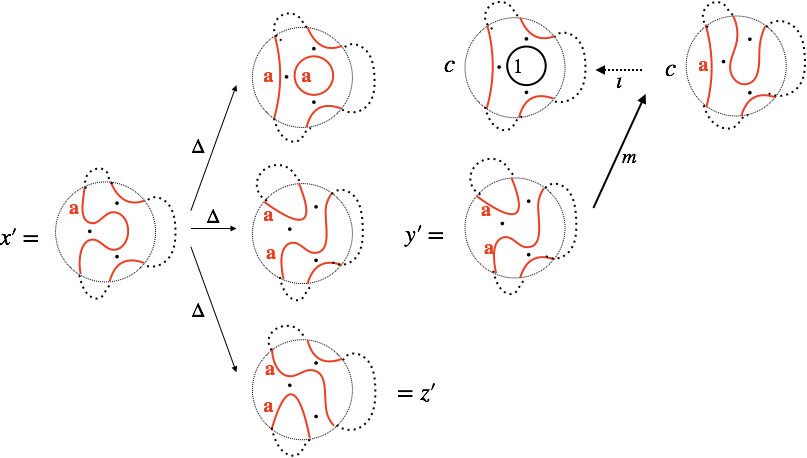}
        \caption{Elements $x', y', z' \in C(D')$}
        \label{fig:RM3_xyz_2}
    \end{figure}

    Then
    \begin{align*}
        dx' = (\ca' + \gamma(y')) + y' + z' &\Rightarrow\ \ca' \sim -y' - \gamma(y') - z' \\
        d\bar{x}' = (\cb' + \gamma(\bar{y}')) + \bar{y}' + \bar{z}' &\Rightarrow\ \cb' \sim -\bar{y}' - \gamma(\bar{y}') - \bar{z}'
    \end{align*}

    Thus from the definition of $\rho$, we have
    \[
        c \ca' \sim \rho(\ca),
        \quad
        -c \cb' \sim \rho(\cb).
    \]
\end{case}

The remaining four cases proceeds similarly. \qed

    \newpage
    \bibliographystyle{plain}
    \bibliography{bibliography.bib}

\begin{thebibliography}{10}

\bibitem{Alishahi:2017ug}
Akram {Alishahi}.
\newblock {The Bar-Natan homology and unknotting number}.
\newblock {\em arXiv e-prints}, page arXiv:1710.07874, October 2017.

\bibitem{Alishahi:2017wl}
Akram Alishahi and Nathan Dowlin.
\newblock The lee spectral sequence, unknotting number, and the knight move
  conjecture.
\newblock {\em Topology and its Applications}, 2018.

\bibitem{BarNatan:2004if}
Dror Bar-Natan.
\newblock Khovanov's homology for tangles and cobordisms.
\newblock {\em Geom. Topol.}, 9:1443--1499, 2005.

\bibitem{beliakova2008categorification}
Anna Beliakova and Stephan Wehrli.
\newblock Categorification of the colored {J}ones polynomial and {R}asmussen
  invariant of links.
\newblock {\em Canad. J. Math.}, 60(6):1240--1266, 2008.

\bibitem{Bennequin:eK9ZWAXw}
Daniel Bennequin.
\newblock Entrelacements et \'{e}quations de {P}faff.
\newblock In {\em Third {S}chnepfenried geometry conference, {V}ol. 1
  ({S}chnepfenried, 1982)}, volume 107 of {\em Ast\'{e}risque}, pages 87--161.
  Soc. Math. France, Paris, 1983.

\bibitem{Collari:2017bennequin}
Carlo {Collari}.
\newblock {A Bennequin-type inequality and combinatorial bounds}.
\newblock {\em arXiv e-prints}, page arXiv:1707.03424, Jul 2017.

\bibitem{Collari:2017wr}
Carlo Collari.
\newblock Transverse invariants from {K}hovanov-type homologies.
\newblock {\em J. Knot Theory Ramifications}, 28(1):1950012, 37, 2019.

\bibitem{Dynnikov:ui}
I.~A. Dynnikov and M.~V. Prasolov.
\newblock Bypasses for rectangular diagrams. {A} proof of the {J}ones
  conjecture and related questions.
\newblock {\em Trans. Moscow Math. Soc.}, pages 97--144, 2013.

\bibitem{Hedden:2012iz}
Matthew Hedden and Yi~Ni.
\newblock Khovanov module and the detection of unlinks.
\newblock {\em Geom. Topol.}, 17(5):3027--3076, 2013.

\bibitem{Knots:fw}
V.~F.~R. Jones.
\newblock Hecke algebra representations of braid groups and link polynomials.
\newblock {\em Ann. of Math. (2)}, 126(2):335--388, 1987.

\bibitem{kawamuro2006_1}
Keiko Kawamuro.
\newblock The algebraic crossing number and the braid index of knots and links.
\newblock {\em Algebr. Geom. Topol.}, 6:2313--2350, 2006.

\bibitem{kawamuro2006_2}
Keiko Kawamuro.
\newblock Conjectures on the braid index and the algebraic crossing number.
\newblock In {\em Intelligence of low dimensional topology 2006}, volume~40 of
  {\em Ser. Knots Everything}, pages 151--155. World Sci. Publ., Hackensack,
  NJ, 2007.

\bibitem{khovanov2000}
Mikhail Khovanov.
\newblock A categorification of the {J}ones polynomial.
\newblock {\em Duke Math. J.}, 101(3):359--426, 2000.

\bibitem{Khovanov:2002wo}
Mikhail Khovanov.
\newblock Patterns in knot cohomology. {I}.
\newblock {\em Experiment. Math.}, 12(3):365--374, 2003.

\bibitem{khovanov2004}
Mikhail Khovanov.
\newblock Link homology and {F}robenius extensions.
\newblock {\em Fund. Math.}, 190:179--190, 2006.

\bibitem{Kronheimer:1993}
P.~B. Kronheimer and T.~S. Mrowka.
\newblock Gauge theory for embedded surfaces. {I}.
\newblock {\em Topology}, 32(4):773--826, 1993.

\bibitem{Kronheimer:2011by}
P.~B. Kronheimer and T.~S. Mrowka.
\newblock Gauge theory and {R}asmussen's invariant.
\newblock {\em J. Topol.}, 6(3):659--674, 2013.

\bibitem{LaFountain:uf}
Douglas~J. LaFountain and William~W. Menasco.
\newblock Embedded annuli and {J}ones' conjecture.
\newblock {\em Algebr. Geom. Topol.}, 14(6):3589--3601, 2014.

\bibitem{lee2005endomorphism}
Eun~Soo Lee.
\newblock An endomorphism of the {K}hovanov invariant.
\newblock {\em Adv. Math.}, 197(2):554--586, 2005.

\bibitem{lewark2009rasmussen}
Lukas Lewark.
\newblock {\em The Rasmussen invariant of arborescent and of mutant links}.
\newblock PhD thesis, Master thesis, ETH Z{\"u}rich, 2009.

\bibitem{Lipshitz:2013kp}
Robert Lipshitz, Lenhard Ng, and Sucharit Sarkar.
\newblock On transverse invariants from {K}hovanov homology.
\newblock {\em Quantum Topol.}, 6(3):475--513, 2015.

\bibitem{lipshitz2014refinement}
Robert Lipshitz and Sucharit Sarkar.
\newblock A refinement of {R}asmussen's {$S$}-invariant.
\newblock {\em Duke Math. J.}, 163(5):923--952, 2014.

\bibitem{mackaay2007remark}
Marco Mackaay, Paul Turner, and Pedro Vaz.
\newblock A remark on {R}asmussen's invariant of knots.
\newblock {\em J. Knot Theory Ramifications}, 16(3):333--344, 2007.

\bibitem{Malesic:wm}
Jo\v{z}e Male\v{s}i\v{c} and Pawe\l Traczyk.
\newblock Seifert circles, braid index and the algebraic crossing number.
\newblock {\em Topology Appl.}, 153(2-3):303--317, 2005.

\bibitem{Milnor1968}
John Milnor.
\newblock {\em Singular points of complex hypersurfaces}.
\newblock Annals of Mathematics Studies, No. 61. Princeton University Press,
  Princeton, N.J.; University of Tokyo Press, Tokyo, 1968.

\bibitem{Mukherjee:ww}
Sujoy Mukherjee, Józef~H. Przytycki, Marithania Silvero, Xiao Wang, and
  Seung~Yeop Yang.
\newblock Search for torsion in khovanov homology.
\newblock {\em Experimental Mathematics}, 0(0):1--10, 2017.

\bibitem{Plamenevskaya:2006dd}
Olga Plamenevskaya.
\newblock Transverse knots and {K}hovanov homology.
\newblock {\em Math. Res. Lett.}, 13(4):571--586, 2006.

\bibitem{Rasmussen:2005vo}
Jacob {Rasmussen}.
\newblock {Khovanov's invariant for closed surfaces}.
\newblock {\em arXiv Mathematics e-prints}, page math/0502527, February 2005.

\bibitem{rasmussen2010khovanov}
Jacob Rasmussen.
\newblock Khovanov homology and the slice genus.
\newblock {\em Invent. Math.}, 182(2):419--447, 2010.

\bibitem{Shumakovitch:2007}
Alexander~N. Shumakovitch.
\newblock Rasmussen invariant, slice-{B}ennequin inequality, and sliceness of
  knots.
\newblock {\em J. Knot Theory Ramifications}, 16(10):1403--1412, 2007.

\bibitem{Wehrli:uy}
S.~Wehrli.
\newblock A spanning tree model for {K}hovanov homology.
\newblock {\em J. Knot Theory Ramifications}, 17(12):1561--1574, 2008.

\end{thebibliography}

\end{document}